\DeclareMathOperator*{\Motimes}{\text{\raisebox{0.25ex}{\scalebox{0.8}{$\bigotimes$}}}}
\newcommand{\mD}{\mathcal{D}}
\newcommand{\mL}{\mathcal{L}}
\newcommand{\mG}{\mathcal{G}}
\newcommand{\mN}{\mathcal{N}}
\newcommand{\mP}{\mathcal{P}}
\newcommand{\mQ}{\mathcal{Q}}
\newcommand{\TV}{d_{\text{TV}}}
\newcommand{\KL}{d_{\text{KL}}}
\newcommand{\SKL}{d_{\text{SKL}}}
\newcommand{\bP}{\mathbb{P}}
\newcommand{\bE}{\mathbb{E}}
\newcommand{\pr}[1]{\textsc{#1}}
\newtheorem{question}{Question}[section]
\newtheorem{definition}{Definition}[section]
\newtheorem{fact}{Fact}[section]
\newtheorem{proposition}{Proposition}[section]
\newtheorem{theorem}{Theorem}[section]
\newtheorem{corollary}{Corollary}[section]
\newtheorem{conjecture}{Conjecture}[section]
\newtheorem{lemma}[theorem]{Lemma}
\newenvironment{fminipage}%
  {\begin{Sbox}\begin{minipage}}%
  {\end{minipage}\end{Sbox}\fbox{\TheSbox}}
\newenvironment{algbox}[0]{\vskip 0.2in
\noindent 
\begin{fminipage}{6.3in}
}{
\end{fminipage}
\vskip 0.2in
}
\begin{document}

\title{Universality of Computational Lower Bounds \\ for Submatrix Detection}

\author{Matthew Brennan\thanks{Massachusetts Institute of Technology. Department of EECS. Email: \texttt{brennanm@mit.edu}.}
\and 
Guy Bresler\thanks{Massachusetts Institute of Technology. Department of EECS. Email: \texttt{guy@mit.edu}.}
\and
Wasim Huleihel\thanks{Tel-Aviv University. Department of EE. Email: \texttt{wasim8@gmail.com}.}}
\date{\today}

\maketitle

\begin{abstract}
In the general submatrix detection problem, the task is to detect the presence of a small $k \times k$ submatrix with entries sampled from a distribution $\mP$ in an $n \times n$ matrix of samples from $\mQ$. This formulation includes a number of well-studied problems, such as biclustering when $\mP$ and $\mQ$ are Gaussians and the planted dense subgraph formulation of community detection when the submatrix is a principal minor and $\mP$ and $\mQ$ are Bernoulli random variables. These problems all seem to exhibit a universal phenomenon: there is a statistical-computational gap depending on $\mP$ and $\mQ$ between the minimum $k$ at which this task can be solved and the minimum $k$ at which it can be solved in polynomial time.

Our main result is to tightly characterize this computational barrier as a tradeoff between $k$ and the KL divergences between $\mP$ and $\mQ$ through average-case reductions from the planted clique conjecture. These computational lower bounds hold given mild assumptions on $\mP$ and $\mQ$ arising naturally from classical binary hypothesis testing. In particular, our results recover and generalize the planted clique lower bounds for Gaussian biclustering in \cite{ma2015computational, brennan2018reducibility} and for the sparse and general regimes of planted dense subgraph in \cite{hajek2015computational, brennan2018reducibility}. This yields the first universality principle for computational lower bounds obtained through average-case reductions.

To reduce from planted clique to submatrix detection for a specific pair $\mP$ and $\mQ$, we introduce two techniques for average-case reductions: (1) multivariate rejection kernels which perform an algorithmic change of measure and lift to a larger submatrix while obtaining an optimal tradeoff in KL divergence, and (2) a technique for embedding adjacency matrices of graphs as principal minors in larger matrices that handles distributional issues arising from their diagonal entries and the matching row and column supports of the $k \times k$ submatrix. We suspect that these techniques have applications in average-case reductions to other problems and are likely of independent interest. We also characterize the statistical barrier in our general formulation of submatrix detection.
\end{abstract}

\pagebreak

\tableofcontents

\pagebreak

\section{Introduction}

In the general submatrix detection problem, the task is to detect the presence of a small $k \times k$ submatrix with entries sampled from a distribution $\mP$ in an $n \times n$ matrix of samples from $\mQ$. This problem arises in many natural contexts for specific pairs of distributions $(\mP, \mQ)$. When $\mP$ and $\mQ$ are Gaussians, this yields the well-studied problem of biclustering arising from applications in analyzing microarray data \cite{shabalin2009finding}. A large body of work has studied the information-theoretic lower bounds, algorithms and limitations of restricted classes of algorithms for biclustering \cite{butucea2013detection, montanari2015limitation, shabalin2009finding, kolar2011minimax, balakrishnan2011statistical, chen2016statistical, cai2017computational}. When the $k \times k$ submatrix is a principal minor and $\mP$ and $\mQ$ are Bernoulli random variables, general submatrix detection becomes the planted dense subgraph formulation of community detection. This problem has also been studied extensively from algorithmic and information-theoretic viewpoints \cite{arias2014community, butucea2013detection, verzelen2015community, chen2016statistical, montanari2015finding, candogan2018finding, hajek2016achieving}.

The best known algorithms for both the Gaussian and Bernoulli problems seem to exhibit a peculiar phenomenon: there appears to be a statistical-computational gap between the minimum $k$ at which this task can be solved and the minimum $k$ at which it can be solved in polynomial time. Tight statistical-computational gaps for both biclustering and several parameter regimes of planted dense subgraph were recently established through average-case reductions from the planted clique conjecture \cite{ma2015computational, hajek2015computational, brennan2018reducibility}. Furthermore, the regimes in which these problems are information-theoretically impossible, statistically possible but computational hard and admit polynomial time algorithms appear to have a common structure. This raises the following natural question:

\begin{question}
Are the statistical-computational gaps for general submatrix detection a universal phenomenon regardless of the specific pair of distributions $(\mP, \mQ)$?
\end{question}

We answer this question for a wide class of pairs of distributions $(\mP, \mQ)$. Our main result is to tightly characterize this computational barrier as a tradeoff between $k$ and the KL divergences between $\mP$ and $\mQ$ through average-case reductions from the planted clique conjecture. These computational lower bounds hold given mild assumptions on $\mP$ and $\mQ$ arising naturally from classical binary hypothesis testing. In particular, our results recover and widely generalize the planted clique lower bounds for Gaussian biclustering in \cite{ma2015computational, brennan2018reducibility} and for the sparse and general regimes of planted dense subgraph in \cite{hajek2015computational, brennan2018reducibility}. This yields the first universality principle for computational lower bounds obtained through average-case reductions. We also characterize the statistical barrier in our general formulation of submatrix detection.

Average-case reductions are notoriously brittle in the sense that most natural maps designed for worst-case problems fail to faithfully map a natural distribution to a natural distribution over the target problem. A universality result obtained through an average-case reduction necessarily overcomes this barrier in a strong way as it would have to simultaneously map to an entire set of natural distributions over the target problem. A main contribution of the paper is to introduce techniques handling subtle technical obstacles that arise when devising such a reduction.

Our results are close in flavour to several previous works showing universal phenomena in the context of submatrix problems. In \cite{montanari2015finding}, approximate message passing algorithms were shown to recover the support of the planted submatrix under regularity conditions on $(\mP, \mQ)$. \cite{hajek2016semidefinite} analyzed semidefinite programming algorithms also under regularity conditions on $(\mP, \mQ)$. In \cite{hajek2017information}, the information-theoretic thresholds for submatrix localization -- the recovery variant of our detection problem -- were shown under very mild assumptions on $(\mP, \mQ)$, characterizing the statistical limit of the problem over a large universality class. In \cite{lesieur2015mmse, deshpande2015asymptotic, krzakala2016mutual}, information-theoretic thresholds for universal formulations of rank-one matrix estimation were established by analyzing approximate message passing and Guerra's interpolation argument. Our work is the first to analyze both the information-theoretic and computational lower bounds for submatrix detection universally and the first work we are aware of producing a universality class of computational lower bounds for any problem. The conditions on $(\mP, \mQ)$ for our computational lower bounds are mild and similar to those in \cite{hajek2017information}.

\subsection{Contributions to Techniques for Average-Case Reductions}

One of our main contributions is to introduce two new techniques for average-case reductions that are of independent interest. This work is part of a growing body of literature establishing statistical-computational gaps in high-dimensional inference problems based on average-case reductions. Previous reductions include lower bounds for testing $k$-wise independence \cite{alon2007testing}, RIP certification \cite{wang2016average, koiran2014hidden}, matrix completion \cite{chen2015incoherence} and sparse PCA \cite{berthet2013optimal, berthet2013complexity, wang2016statistical, gao2017sparse, brennan2019optimal}. A number of techniques were introduced in \cite{brennan2018reducibility} to provide the first web of average-case reductions to problems including planted independent set, planted dense subgraph, sparse spiked Wigner, sparse PCA, the subgraph stochastic block model and biclustering. More detailed surveys of this area can be found in the introduction section of \cite{brennan2018reducibility} and in \cite{wu2018statistical}. In this work, we introduce the following two techniques to map from a generalization of planted clique to general submatrix detection: 
\begin{itemize}
\item \textbf{Planting Diagonals by Embedding as a Minor}: This is a technique for embedding adjacency matrices of graphs as principal minors in larger matrices. It handles distributional issues arising from missing diagonal entries in adjacency matrices and the matching row and column supports of the $k \times k$ submatrix.
\item \textbf{Multivariate Rejection Kernels:} These are randomized maps that perform an algorithmic change of measure and lift to a larger submatrix while obtaining an optimal tradeoff in KL divergence that matches the target lower bounds for submatrix detection.
\end{itemize}
The first technique solves a central obstacle in the reductions of \cite{ma2015computational}, \cite{hajek2015computational}, and \cite{brennan2018reducibility} to biclustering and planted dense subgraph. In \cite{hajek2015computational}, it is noted that the main issue leading to the complicated analysis of their reduction arises from the missing diagonal entries in the adjacency matrix of planted clique, which on lifting get mapped to ``holes'' in the community. In \cite{brennan2018reducibility}, this same obstacle was overcome through $\textsc{Distributional-Lifting}$, an involved technique first performing an algorithmic change of measure and then iteratively lifting the resulting problem. This method relied crucially on the existence of a ``cloning'' map for the new pair of measures $(\mP, \mQ)$, which were exhibited in the Poisson and Gaussian cases. In \cite{ma2015computational}, this obstacle was handled by restricting to a lower left submatrix of the adjacency matrix, but in doing so broke the symmetry in the row and column supports of the $k \times k$ submatrix.

None of these previous reductions generalize to the universal setting with arbitrary $(\mP, \mQ)$. Our first technique resolves the missing diagonal entries obstacle cleanly and is crucial to yielding lower bounds for arbitrary $(\mP, \mQ)$ and producing a general submatrix instance with matching row and column supports. The latter property is essential to our reduction implying lower bounds for planted dense subgraph. These obstacles and our first technique are described in more detail in Section \ref{sec:avgredsub}.

Our second technique generalizes the rejection kernel framework introduced in \cite{brennan2018reducibility}, while simultaneously performing a lift to a higher dimensional instance. This technique overcomes issues that prior reductions and a seemingly natural approach face when mapping to general $(\mP, \mQ)$. The reductions in \cite{ma2015computational} and \cite{hajek2015computational} for performing algorithmic changes of measure required sampling explicit PMFs. This is infeasible when lifting to a higher dimensional instance, as we need to do in order to obtain tight lower bounds for general submatrix detection. $\pr{Distributional-Lifting}$ in \cite{brennan2018reducibility} required an efficient ``cloning'' map, which does not clearly exist for general $(\mP, \mQ)$. One natural approach to showing hardness for submatrix detection is to first show hardness for one pair $(\mP', \mQ')$ and then reduce entrywise to the target $(\mP, \mQ)$. We show in Section \ref{sec:entrywise} that this approach fails to show tight lower bounds. Multivariate rejection kernels bypass all of these issues, as we discuss further in Section \ref{sec:mrklarge}. 

\subsection{Outline of the Paper}

The paper is structured as follows. In Section \ref{sec:defns}, we formally define the general submatrix problem and motivate some assumptions on $(\mP, \mQ)$ from classical binary hypothesis testing. In Section \ref{sec:summary}, we give general statements of our universality results for computational and statistical barriers. We also specialize these results to universality classes over which we can obtain complete characterizations of the computational phase diagram for submatrix detection. In Section \ref{sec:avgred}, we provide preliminaries on average-case reductions in total variation. In Section \ref{sec:mrklarge}, we introduce and analyze multivariate rejection kernels. In Section \ref{sec:avgredsub}, we give our general average-case reduction $\textsc{To-Submatrix}$. In Section \ref{sec:compbarriers}, we deduce computational lower bounds from this reduction and analyze simple test statistics showing achievability of these lower bounds. In Section \ref{sec:statbarriers}, we establish the statistical limits of submatrix detection. In Section \ref{sec:universalityclasses}, we discuss the strength of the assumptions giving rise to our main three universality classes $\pr{uc-a}, \pr{uc-b}$ and $\pr{uc-c}$ and the distributions that they contain. In Section \ref{sec:conc}, we discuss open problems remaining after this work.

\subsection{Notation}

In this paper, we adopt the following notation. Let $\mL(X)$ denote the distribution law of a random variable $X$ and given two laws $\mL_1$ and $\mL_2$, let $\mL_1 + \mL_2$ denote $\mL(X + Y)$ where $X \sim \mL_1$ and $Y \sim \mL_2$ are independent. Given a distribution $\mathcal{P}$, let $\mathcal{P}^{\otimes n}$ denote the distribution of $(X_1, X_2, \dots, X_n)$ where the $X_i$ are i.i.d. according to $\mathcal{P}$. Similarly, let $\mathcal{P}^{\otimes m \times n}$ denote the distribution on $\mathbb{R}^{m \times n}$ with i.i.d. entries distributed as $\mathcal{P}$. Let $\TV$, $\KL$ and $\chi^2$ denote total variation distance, KL divergence and $\chi^2$ divergence, respectively. Let $[n] = \{1, 2, \dots, n\}$ and let $\mathbf{1}_S$ denote the vector $v \in \mathbb{R}^n$ with $v_i = 1$ if $i \in S$ and $v_i = 0$ if $i \not \in S$ where $S \subseteq [n]$.

\section{Submatrix Problems and Conditions for Universality}
\label{sec:defns}

\subsection{General Submatrix Detection}

The primary focus in this work are detection problems, wherein an algorithm is given a set of observations and tasked with distinguishing between two hypotheses:
\begin{itemize}
\item a \emph{uniform} hypothesis $H_0$, under which observations are generated from the natural noise distribution for the problem; and
\item a \emph{planted} hypothesis $H_1$, under which observations are generated from the same noise distribution but modified by planting a latent sparse structure.
\end{itemize}
In the problems we consider, $H_0$ and $H_1$ are typically both simple hypothesis consisting of a single distribution. As discussed in \cite{brennan2018reducibility} and \cite{hajek2015computational}, lower bounds for simple vs. simple hypothesis testing formulations are stronger and technically more difficult than for formulations involving composite hypotheses. For a given detection problem, the goal is to design an algorithm $\mathcal{A}(X) \in \{0, 1\}$ that classifies an input $X$ with low asymptotic Type I$+$II error
$$\limsup_{n \to \infty} \left\{ \bP_{H_0}[\mathcal{A}(X) = 1] + \bP_{H_1}[\mathcal{A}(X) = 0] \right\}$$
where $n$ is the parameter indicating the size of $X$. If the asymptotic Type I$+$II error of $\mathcal{A}$ is zero, then we say $\mathcal{A}$ \emph{solves} the detection problem. We now define the universal formulation of submatrix detection that will be our main object of study. Throughout this paper, $(\mP, \mQ)$ will either denote a fixed pair of distributions over a measurable space $(X, \mathcal{B})$ or, when there is a natural problem parameter $n$, implicitly denote a pair of sequences of distributions $\mP = (\mP_n)$ and $\mQ = (\mQ_n)$.

\begin{definition}[General Symmetric Index Set Submatrix Detection]
Given a pair of distributions $(\mP, \mQ)$ over a measurable space $(X, \mathcal{B})$, let $\pr{ssd}(n, k, \mP, \mQ)$ denote the hypothesis testing problem with observation $M \in X^{n \times n}$ and hypotheses
$$H_0 : M \sim \mQ^{\otimes n \times n} \quad \textnormal{and} \quad H_1 : M \sim \mathcal{M}(n, k, \mP, \mQ)$$
where $\mathcal{M}(n, k, \mP, \mQ)$ is the distribution of matrices $M$ with entries $M_{ij} \sim \mP$ if $i, j \in S$ and $M_{ij} \sim \mQ$ otherwise that are conditionally independent given $S$, which is chosen uniformly at random over all $k$-subsets of $[n]$.
\end{definition}

Similarly, asymmetric index set submatrix detection $\textsc{asd}(n, k, \mP, \mQ)$ is formulated with $M_{ij} \sim \mP$ for all $(i, j) \in S \times T$ where both $S$ and $T$ are chosen independently and uniformly at random over all $k$-subsets of $[n]$. Note that both information-theoretic and computational lower bounds for $\textsc{ssd}$ are stronger than for $\textsc{asd}$, since an instance of $\textsc{asd}$ can be obtained from $\textsc{ssd}$ by randomly permuting its column indices. Similarly, algorithms for $\textsc{asd}$ are stronger since permuting the columns of $\textsc{ssd}$ and applying an $\textsc{asd}$ blackbox yields an algorithm for $\textsc{ssd}$. In this work, we characterize the statistical and computational barriers in these and related problems through average-case reductions from the planted clique conjecture. For this to be possible, it is necessary to impose assumptions on $(\mP, \mQ)$ so that submatrix detection is well-posed. The next section is devoted to identifying several reasonable and natural assumptions on $(\mP, \mQ)$.

\subsection{Natural Assumptions from Classical Binary Hypothesis Testing}

Our objective is to examine the statistical-computational tradeoffs that arise in submatrix detection as a high-dimensional problem with hidden structure. More precisely, we aim to capture the tradeoff between the dimension $k$ of the hidden submatrix and how distinguishable the two distributions $\mP$ and $\mQ$ are. For this to be possible, the problem of testing between $\mP$ and $\mQ$ needs to be well-posed. Consider the classical binary hypothesis testing formulation of this task with i.i.d. observations $X_1, X_2, \dots, X_m$ where
$$H_0 : X_1, X_2, \dots, X_m \sim_{\textnormal{i.i.d.}} \mQ \quad \textnormal{and} \quad H_1 : X_1, X_2, \dots, X_m \sim_{\textnormal{i.i.d.}} \mP$$
Note that given the latent submatrix indices $S$, the problem of distinguishing between $H_0$ and $H_1$ in $\textsc{ssd}$ reduces exactly to this classical binary hypothesis testing task with $m = k^2$ samples.

In order to capture the tradeoffs that arise because of hidden structure in high dimensions, $\textsc{ssd}$ ought to be easy to solve given $S$. By the Neyman-Pearson Lemma, the optimal test is a log-likelihood ratio (LLR) test that outputs $H_1$ if
$$\sum_{i = 1}^m L(X_i) \ge m \tau \quad \textnormal{where} \quad L(x) = \log \frac{d\mP}{d\mQ}(x)$$
for some threshold $m\tau$ and where $L : X \to \mathbb{R}$ is the LLR or logarithm of the Radon-Nikodym derivative between $\mP$ and $\mQ$. We assume that $L$ can be computed efficiently so this test is computationally feasible. Note that if $\mQ$ and $\mP$ are not close to mutually absolutely continuous distributions in total variation, then there would be a non-negligible probability of seeing samples from one not in the support of the other. We assume they are exactly mutually absolutely continuous for simplicity and so $L$ is well-defined. We also assume that the expectations of the LLR with respect to $\mP$ and $\mQ$ are finite, or in other words that $\KL(\mP \| \mQ)$ and $\KL(\mQ \| \mP)$ are finite.

The error of this test resolves to the tails of the distribution of the LLR under each of $\mP$ and $\mQ$ at the threshold $m\tau$. Standard Chernoff bounds on these tails yield that if $\tau \in [-\KL(\mP \| \mQ), \KL(\mQ \| \mP)]$ then
\begin{align*}
\bP_{H_0}\left[ \sum_{i = 1}^m L(X_i) \ge m \tau \right] &\le \exp\left( - m \cdot E_{\mQ}(\tau) \right) \\
\bP_{H_1}\left[ \sum_{i = 1}^m L(X_i) < m \tau \right] &\le \exp\left( - m \cdot E_{\mP}(\tau) \right)
\end{align*}
Here, the Chernoff exponents $E_\mP, E_\mQ : \mathbb{R} \to [-\infty, \infty)$ are the Legendre transforms of the log-moment generating functions
$$E_\mQ(\tau) = \sup_{\lambda \in \mathbb{R}} \lambda \tau - \psi_\mQ(\lambda) \quad \text{and} \quad E_\mP(\tau) = \sup_{\lambda \in \mathbb{R}} \lambda \tau - \psi_\mP(\lambda)$$
where $\psi_{\mQ}(\tau) = \log \bE_\mQ[\exp(\lambda L)]$ and $\psi_{\mP}(\tau) = \log \bE_\mP[\exp(\lambda L)]$. Observe that $\psi_\mP(\lambda) = \psi_\mQ(\lambda + 1)$ and thus $E_\mQ(\tau) + \tau = E_\mP(\tau)$. Note that $\psi_\mQ'(0) = -\KL(\mP \| \mQ)$. It is well known that $\lambda \tau - \psi_\mQ(\lambda)$ is concave and has derivative at zero given by $\tau + \KL(\mP \| \mQ)$. This implies that the maximizer $\lambda^*$ to the concave optimization $E_\mQ(\tau)$ can be taken to be nonnegative if $\tau \ge -\KL(\mP \| \mQ)$. The same is true for $E_\mP(\tau)$ if $\tau \le \KL(\mQ \| \mP)$, justifying the Chernoff bounds above. We also remark that $E_\mQ$ and $E_\mP$ are nonnegative convex functions and are minimized at $E_\mQ(-\KL(\mQ \| \mP)) = E_\mP(\KL(\mP \| \mQ)) = 0$.

Note that statements of the form $E_\mQ(\tau) \ge \beta$ or $E_\mP(\tau) \ge \beta$ correspond to large deviation principles (LDP) for the LLR under $\mQ$ and $\mP$. Our main contribution is show that the tradeoff between $k$ and the KL divergence between $\mP$ and $\mQ$ dictates the computational barrier for submatrix detection as long as the LLR has LDPs under $\mQ$ and $\mP$. We devise an average-case reduction to show this given the planted clique conjecture. Our results are described in more detail in the next section. The natural assumptions arising from the discussion above are summarized in the following definition of computable pairs $(\mP, \mQ)$, that we will adopt throughout the rest of the paper.

\begin{definition}[Computable Pair of Distributions] \label{def:computable}
Define a pair of sequences of distributions $(\mP, \mQ)$ over a measurable space $(X, \mathcal{B})$ where $\mP = (\mP_n)$ and $\mQ = (\mQ_n)$ to be computable if:
\begin{enumerate}
\item there is an oracle producing a sample from $\mQ_n$ in $\textnormal{poly}(n)$ time;
\item $\mP_n$ and $\mQ_n$ are mutually absolutely continuous and the likelihood ratio satisfies
$$\bE_{x \sim \mQ_n} \left[\frac{d\mP_n}{d\mQ_n}(x) \right] = \bE_{x \sim \mP_n}\left[\left( \frac{d\mP_n}{d\mQ_n}(x) \right)^{-1} \right] = 1$$
where $\frac{d\mP_n}{d\mQ_n}$ is the Radon-Nikodym derivative.
\item the KL divergences $\KL(\mP_n \| \mQ_n)$ and $\KL(\mQ_n \| \mP_n)$ are both finite; and
\item there is an oracle computing $\frac{d\mP_n}{d\mQ_n} (x)$ in $\textnormal{poly}(n)$ time for each $x \in X$.
\end{enumerate}
\end{definition}

We assume that algorithms solving our submatrix detection problems have access to these oracles and to the values $\KL(\mP \| \mQ)$ and $\KL(\mQ \| \mP)$. The oracles appearing in this definition can be viewed as part of the computational model that we adopt. In particular, when these oracles can be implemented in computational models such as $\textbf{BPP}$, so can our reductions. We remark that the assumptions and discussion in this section are similar to the setup in \cite{hajek2017information}, which showed universality of information-theoretic lower bounds for submatrix recovery. See Sections 2.1 and 3 in \cite{hajek2017information} for further discussion of related assumptions on $\mP$ and $\mQ$.

\section{Summary of Results}
\label{sec:summary}

Our main result is an average-case reduction from planted clique showing a computational lower bound for submatrix detection in terms of KL divergence when the LLR has LDPs under $\mQ$ and $\mP$. We now briefly define the planted clique and planted dense subgraph problems as well as the planted clique and planted dense subgraph conjectures.

The planted dense subgraph problem $\pr{pds}(n, k, p, q)$ with edge densities $0 < q < p \le 1$ is the hypothesis testing problem between
$$H_0: G \sim \mG(n, q) \quad \text{and} \quad H_1 : G \sim \mG(n, k, p, q)$$
where $\mG(n, q)$ denotes an Erd\H{o}s-R\'{e}nyi random graph with edge probability $q$. Here, $\mG(n, k, p, q)$ denotes the random graph formed by sampling $\mG(n, q)$ and replacing the induced graph on a subset $S$ of size $k$ chosen uniformly at random with a sample from $\mG(k, p)$. The planted clique problem $\pr{pc}(n, k, p)$ is then $\pr{pds}(n, k, 1, p)$. There are many polynomial-time algorithms in the literature for finding the planted clique in $\mG(n, k, p)$, including approximate message passing, semidefinite programming, nuclear norm minimization and several combinatorial approaches \cite{feige2000finding, mcsherry2001spectral, feige2010finding, ames2011nuclear, dekel2014finding, deshpande2015finding, chen2016statistical}. All of these algorithms require that $k = \Omega(\sqrt{n})$ if $p$ is constant, despite the fact that the planted clique can be found by exhaustive search as soon as $k$ is larger than $2\log_{1/p} n$. This leads to the following conjecture.

\begin{conjecture}[\pr{pc} Conjecture]
Fix some constant $p \in (0, 1)$. Suppose that $\{ \mathcal{A}_n \}$ is a sequence of randomized polynomial time algorithms $\mathcal{A}_n : \mG_n \to \{0, 1\}$ and $k_n$ is a sequence of positive integers satisfying that $\limsup_{n \to \infty} \log_n k_n < \frac{1}{2}$. Then if $G$ is an instance of $\pr{pc}(n, k, p)$, it holds that
$$\liminf_{n \to \infty} \left( \bP_{H_0}\left[\mathcal{A}_n(G) = 1\right] + \bP_{H_1}\left[\mathcal{A}_n(G) = 0\right] \right) \ge 1.$$ 
\end{conjecture}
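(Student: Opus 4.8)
The final displayed statement is a \emph{conjecture}, not a theorem, so strictly speaking there is no proof to propose: the planted clique conjecture is an assumed average-case hardness hypothesis, and an unconditional proof would be a landmark result requiring techniques well beyond what is currently available. What one can do is explain why $\limsup_n \log_n k_n < 1/2$ is the natural place to conjecture a computational barrier and assemble the evidence supporting it. First I would recall the algorithmic side: every known polynomial-time approach --- spectral and PCA methods, the Lov\'asz--Schrijver and sum-of-squares semidefinite relaxations, nuclear norm minimization, approximate message passing, and combinatorial degree-peeling algorithms --- succeeds down to $k = \Theta(\sqrt n)$ (for constant $p$) and provably fails below it, whereas brute-force search over $k$-subsets already works once $k \gtrsim 2 \log_{1/p} n$. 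This $\sqrt n$ versus $\log n$ gap has persisted despite decades of attention and is the empirical basis for the conjecture.

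Second I would collect the unconditional lower bounds against restricted but powerful computational models, which form the strongest formal evidence. These include tight degree-$d$ sum-of-squares lower bounds showing that low-degree SOS certificates fail to refute a planted $k$-clique when $k = \tilde\Omega(\sqrt n)$; statistical-query lower bounds ruling out efficient SQ algorithms below $\sqrt n$; the low-degree likelihood ratio calculation, under which degree-$n^{o(1)}$ polynomial tests fail exactly at the $\sqrt n$ threshold; and lower bounds against local Markov chains and bounded-depth circuits. Since essentially every ``natural'' algorithmic paradigm is subsumed by one of these frameworks, and all of them hit the same wall, it is reasonable to conjecture that no polynomial-time algorithm does better, and to adopt this as a hardness assumption --- exactly as is done throughout the average-case reductions literature, and as this paper does to derive its submatrix-detection lower bounds. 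For this purpose I would also take care to state the conjecture, as above, in the stronger simple-vs-simple form with a single planted distribution, since that is what the reduction \textsc{To-Submatrix} will consume.

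Third, on what an \emph{actual} proof would entail: one would have to exhibit, for each polynomial-time $\mathcal{A}_n$, an input distribution on which its Type I$+$II error stays bounded away from $0$, or derive a contradiction with a standard complexity assumption. A conditional proof from a cleaner hypothesis (worst-case hardness of \textsc{clique}, or a suitable derandomization or expansion assumption) would already be significant but is not known, and an unconditional proof would imply average-case circuit lower bounds of a strength presently out of reach. The honest plan, therefore, is not to prove the conjecture but to (a) state it precisely so it serves as a clean reduction hypothesis, (b) cite the body of restricted-model lower bounds as justification, and (c) keep all downstream theorems conditional on it. The main obstacle is precisely the one shared by all of average-case complexity: we have no general method for proving super-polynomial average-case lower bounds, so the planted clique conjecture must remain a conjecture.
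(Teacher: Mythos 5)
You are right that this statement is a conjecture, not a theorem: the paper offers no proof, only the same motivation you give (all known polynomial-time algorithms for planted clique require $k = \Omega(\sqrt{n})$ while exhaustive search succeeds at $k > 2\log_{1/p} n$), and then uses it as a hardness hypothesis for the reduction \textsc{To-Submatrix}. Your treatment — stating it precisely in the simple-vs-simple form, citing evidence from restricted computational models, and keeping downstream results conditional — matches the paper's approach, with your survey of supporting lower bounds being additional context rather than a divergence.
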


The $\pr{pc}$ Conjecture can be seen, through a simple reduction erasing random edges, to imply a similar barrier at $k = o(\sqrt{n})$ for $\pr{pds}(n, k, p, q)$ if $0 < q < p \le 1$ are constants. We refer to this as the $\pr{pds}$ Conjecture. We can now state our main computational lower bounds. Judging the quality of these bounds is easy in cases where they can be achieved by efficient algorithms. We describe a set of universality classes of $(\mP, \mQ)$ for which this is true in Section \ref{sec:completephasediagram}.

\begin{theorem}[Main Computational Lower Bounds]
Let $p \in (0, 1)$ be a fixed constant and $(\mP, \mQ)$ be a computable pair over $(X, \mathcal{B})$ such that either:
\begin{itemize}
\item $k = \Omega(\sqrt{n})$ and $\frac{k^4}{n^2} \cdot \KL(\mP \| \mQ) \to 0$ and the LLR between $(\mP, \mQ)$ satisfies the LDP
$$E_\mP\left(m \right) \ge \omega(m \log n)$$
for some positive $m$ with $\KL(\mP \| \mQ) \le m = o(n^2/k^4)$
\item $k = o(\sqrt{n})$ and $\KL(\mP \| \mQ) < \log p^{-1}$ and the LLR between $(\mP, \mQ)$ satisfies the LDP
$$E_\mP\left( \log p^{-1} \right) \ge 2\log n + \omega(1)$$
\end{itemize}
Then assuming the $\pr{pc}$ conjecture at density $p$, there is no randomized polynomial time algorithm solving $\pr{ssd}(n, k, \mP, \mQ)$ with asymptotic Type I$+$II error less than one.
\end{theorem}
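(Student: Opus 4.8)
The plan is to build an average-case reduction $\pr{To-Submatrix}$ that maps an instance $G$ of $\pr{pc}(N,K,p)$ to a matrix $M \in X^{n\times n}$, with $N$ and $K$ chosen as functions of $(n,k,\mP,\mQ)$, such that $\TV\big(\mL(M), \mQ^{\otimes n\times n}\big) = o(1)$ when $G \sim \mG(N,p)$ and $\TV\big(\mL(M), \mathcal{M}(n, k, \mP, \mQ)\big) = o(1)$ when $G$ is a planted clique instance $\mG(N,K,p)$. Given such a reduction, any randomized polynomial time algorithm $\mathcal{A}$ solving $\pr{ssd}(n,k,\mP,\mQ)$ with asymptotic Type I$+$II error below one would, through $\mathcal{A}\circ\pr{To-Submatrix}$ and the contraction of $\TV$ under postprocessing, solve $\pr{pc}(N,K,p)$ with asymptotic Type I$+$II error below one; choosing the parameters so that $\limsup \log_N K < \tfrac12$ then contradicts the $\pr{pc}$ conjecture at density $p$. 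The oracles for $\mQ$ and $d\mP/d\mQ$ in Definition~\ref{def:computable} are exactly what is needed for $\pr{To-Submatrix}$ to run in $\textnormal{poly}(n)$ time.

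The reduction is the composition of the two ingredients from the introduction, analyzed in Sections~\ref{sec:mrklarge} and~\ref{sec:avgredsub}. First, \emph{planting diagonals by embedding as a minor}: from the $\{0,1\}$-valued, diagonal-free, symmetric adjacency matrix of $G$, produce an intermediate $\{0,1\}$-valued matrix whose planted region is a genuine principal minor, with matching row and column support and correctly distributed diagonal entries, while keeping the null marginal exactly Bernoulli i.i.d. Second, a \emph{multivariate rejection kernel} applied blockwise: replace each entry of the intermediate matrix by an $\ell\times\ell$ block, sending a non-planted Bernoulli to within tiny $\TV$ of $\mQ^{\otimes\ell\times\ell}$ and a planted Bernoulli to within tiny $\TV$ of $\mP^{\otimes\ell\times\ell}$. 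This performs the algorithmic change of measure from a Bernoulli source to $(\mQ,\mP)$ and the dimension lift $K \mapsto k = K\ell$ at once. In the regime $k = \Omega(\sqrt n)$ one takes $\ell \asymp k^2/n$ up to lower-order factors and $N$ with $K \asymp n/k = o(\sqrt N)$, so each planted block carries $\ell^2 \asymp k^4/n^2$ i.i.d.\ samples from $\mP$; the hypotheses $\tfrac{k^4}{n^2}\KL(\mP\|\mQ)\to 0$ and $\KL(\mP\|\mQ)\le m = o(n^2/k^4)$ are precisely what let the kernel's budget $m$ accommodate a $\mP^{\otimes\ell\times\ell}$ output without the change of measure failing. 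In the regime $k = o(\sqrt n)$ little or no lifting is needed, the operative rejection threshold is $\tau = \log p^{-1}$, arising from mapping a planted bit (likelihood ratio $1/p$ against the $\mG(N,p)$ background) up to $\mP$, and $\KL(\mP\|\mQ) < \log p^{-1}$ is what keeps this feasible.

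The heart of the argument is the $\TV$ accounting, which by the triangle inequality splits into three pieces: (i) the error of the minor-embedding step, to be made zero or exponentially small by construction; (ii) the error of the blockwise rejection kernel, which by Chernoff bounds on the likelihood-ratio tails is at most $\exp(-E_\mP(\tau))$ per source bit --- this is exactly where the LDP hypotheses $E_\mP(m) \ge \omega(m\log n)$ and $E_\mP(\log p^{-1}) \ge 2\log n + \omega(1)$ enter, being calibrated so the total error over all $\textnormal{poly}(n)$ invocations is $o(1)$; and (iii) the discrepancy between the planted index set carried over from $G$ and the uniform $k$-subset $S$ of $\mathcal{M}(n, k, \mP, \mQ)$, which the minor-embedding step is designed to eliminate. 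One also verifies that, conditioned on the planted set, the output entries are conditionally independent, matching the definition of $\mathcal{M}(n, k, \mP, \mQ)$.

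The step I expect to be the main obstacle is the construction and analysis of the multivariate rejection kernel: a single efficiently samplable randomized map from one bit to an $\ell\times\ell$ block that is simultaneously within $\exp(-E_\mP(\tau))$-small $\TV$ of $\mQ^{\otimes\ell\times\ell}$ on one branch and of $\mP^{\otimes\ell\times\ell}$ on the other, with the error governed by exactly the Chernoff exponent of the $\mP$-versus-$\mQ$ likelihood ratio rather than something lossier, and with the KL cost of the lift tight enough to reproduce the theorem's tradeoff. This is the ingredient that the explicit-PMF samplers of \cite{ma2015computational, hajek2015computational} and $\pr{Distributional-Lifting}$ of \cite{brennan2018reducibility} could not supply for general $(\mP,\mQ)$. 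The secondary obstacle is making the minor-embedding step yield a bona fide symmetric index set with a correctly distributed diagonal while leaving the null distribution exactly product-$\mQ$ --- the ``missing diagonal'' issue that complicated \cite{hajek2015computational} and forced the symmetry-breaking restriction to a corner submatrix in \cite{ma2015computational}.
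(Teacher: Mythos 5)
Your plan follows the paper's route (embed the graph as a principal minor to plant the diagonal, then apply blockwise multivariate rejection kernels calibrated by the Chernoff exponent of the LLR, with $\ell \approx k^2/n$ in the dense regime and the convexity of $E_\mP$ transferring the hypothesis $E_\mP(m) = \omega(m\log n)$ to the per-block threshold, exactly as in Corollary \ref{cor:pdslowerbounds}), but it is missing one ingredient that the pipeline as you describe it cannot do without. The target $\mathcal{M}(n,k,\mP,\mQ)$ has \emph{all} $n^2$ entries conditionally independent given $S$, including the symmetric pairs $(i,j)$ and $(j,i)$, whereas the adjacency matrix of the planted clique instance is symmetric. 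If you fill a principal minor of your intermediate matrix directly with it, the two entries of each symmetric pair are identical, and applying independent rejection kernels to the blocks indexed by $(i,j)$ and $(j,i)$ does not fix this: the two blocks share the same latent bit, so their joint law is the two-point mixture $Q\cdot \mD_1\otimes\mD_1 + (1-Q)\cdot \mD_0\otimes\mD_0$ (where $\mD_0,\mD_1$ are the kernel outputs conditioned on the bit) rather than the product law, a discrepancy of order $Q(1-Q)$ per pair that does not vanish in total variation and is ruinous when summed over the $\Theta(N^2)$ pairs. This is also why your claim that the intermediate matrix has a null marginal that is ``exactly Bernoulli i.i.d.''\ cannot hold as stated (quite apart from the fact that the diagonal-planting step itself only achieves $o(1)$, not zero, error; see Lemma \ref{lem:plantingdiagonals}). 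The paper resolves the symmetry issue with the $\textsc{Graph-Clone}$ subroutine (Lemma \ref{lem:cloning}): an exact cloning map producing two independent copies $(G_1,G_2)$ of the input at adjusted densities $P = p$ and $Q = 1-\sqrt{(1-p)(1-q)}$ (and $Q=\sqrt{q}$ when $p=1$), one copy filling the part of the embedded minor above the diagonal and the other the part below. An alternative repair would be to have a single rejection kernel emit both symmetric $\ell\times\ell$ blocks jointly from one bit, at twice the KL budget per bit; but some such device must be added, and your proposal neither includes one nor flags the need for it.

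Everything else in your outline matches the paper's argument: the reduction-in-TV composition (Lemma \ref{lem:3a} together with Lemma \ref{lem:tvacc}), the role of the minor embedding in handling the missing diagonal and preserving matching row and column supports, the rejection-kernel guarantee with error controlled by $\exp(-\ell^2 E_\mP(c\,\ell^{-2}))$ per block via Lemma \ref{lem:mrkupperbound}, and the parameter choices making the starting clique size $o(\sqrt{N})$. With the cloning (or joint-block) step added, your accounting of the three error sources would go through essentially as in the paper's proofs of Theorems \ref{thm:lowsparsehssd} and \ref{thm:highsparsehssd} and Corollary \ref{cor:pclowerbounds}.
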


This is the simplest theorem statement of our lower bounds. A more general computational lower bound starting from the $\textsc{pds}$ Conjecture and including the heteroskedastic formulation of submatrix detection with different pairs $(\mP_{ij}, \mQ_{ij})$ at each entry is stated in Section \ref{sec:compbarriers}. We also give an alternative version requiring weaker bounds on $E_\mP$ and showing a slightly weaker formulation of the same computational lower bounds. In addition to computational lower bounds, we show information-theoretic lower bounds and give inefficient and polynomial time tests providing upper bounds at the barriers in submatrix detection. In Sections \ref{sec:polytests} and \ref{sec:searchtest}, we give general statements of these results given lower bounds on $E_\mP$ and $E_\mQ$ similar to the conditions in the theorem above.

\subsection{Universality Class with a Complete Phase Diagram}
\label{sec:completephasediagram}

\begin{figure*}[t!]
\centering
\begin{tikzpicture}[scale=0.45]
\tikzstyle{every node}=[font=\footnotesize]
\def\xmin{0}
\def\xmax{16}
\def\ymin{-1}
\def\ymax{11}

\draw[->] (\xmin,\ymin) -- (\xmax,\ymin) node[right] {$\beta$};
\draw[->] (\xmin,\ymin) -- (\xmin,\ymax) node[above] {$\alpha$};

\node at (15, -1) [below] {$1$};
\node at (7.5, -1) [below] {$\frac{1}{2}$};
\node at (0, 0) [left] {$0$};
\node at (0, 10) [left] {$2$};
\node at (0, 5) [left] {$1$};
\node at (0, 3.33) [left] {$\frac{2}{3}$};
\node at (10, -1) [below] {$\frac{2}{3}$};

\filldraw[fill=cyan!25, draw=blue] (0, 0) -- (7.5, 0) -- (10, 3.33) -- (0, 0);
\filldraw[fill=green!25, draw=green] (0, 0) -- (7.5, 0) -- (15, 10) -- (15, -1) -- (0, -1) -- (0, 0);
\filldraw[fill=gray!25, draw=gray] (0, 0) -- (10, 3.33) -- (15, 10) -- (0, 10) -- (0, 0);

\node at (4, -0.5) {$\SKL(\mP, \mQ) \asymp 1$};
\node at (12.2, 6)[rotate=54, anchor=south] {$\SKL(\mP, \mQ) \asymp \frac{n^2}{k^4}$};
\node at (4, 1.2)[rotate=20, anchor=south] {$\SKL(\mP, \mQ) \asymp \frac{1}{k}$};
\node at (6.5, 9) {information-theoretically impossible};
\node at (12, 2) {polynomial-time};
\node at (12, 1.25) {algorithms};
\node at (6.5, 1.25) {PC-hard};
\end{tikzpicture}

\caption{Computational and statistical barriers in $\pr{ssd}(n, k, \mP, \mQ)$ where $(\mP, \mQ)$ is in $\pr{uc-a}, \pr{uc-b}$ and $\pr{uc-c}$ with $k = \tilde{\Theta}(n^\beta)$ and $\SKL(\mP, \mQ) = \tilde{\Theta}(n^{-\alpha})$.}
\label{fig:spcaphasediagram}
\end{figure*}

We now outline several assumptions on a computable pair $(\mP, \mQ)$ that allow our results to completely characterize the computational phase diagram for $\pr{ssd}(n, k, \mP, \mQ)$, by providing lower bounds on $E_\mP$ and $E_\mQ$. The first class we consider is a universality class that allows our average-case reduction to show lower bounds for submatrix detection.

\begin{definition}[Universality Class $\pr{uc-a}$]
Define $(\mP, \mQ)$ to be in the universality class $\textsc{uc-a}$ if $(\mP, \mQ)$ is computable and for any fixed $\epsilon \in (0, 1)$, it holds that
$$E_\mP(n^{\epsilon} \cdot \KL(\mP_n \| \mQ_n)) = \Omega(n^{\epsilon} \cdot \KL(\mP_n \| \mQ_n) \cdot \log n)$$
\end{definition}

The next universality class $\pr{uc-b}$ that we consider ensures the simple test statistics introduced in Sections \ref{sec:polytests} and \ref{sec:searchtest} show achievability at the computational and statistical barriers. This class is introduced as Assumption 2 in \cite{hajek2017information} and is weaker than sub-Gaussianity of the LLR.

\begin{definition}[Universality Class $\pr{uc-b}$]
Define $(\mP, \mQ)$ to be in the universality class $\textsc{uc-b}$ if $(\mP, \mQ)$ is computable and there is a constant $C \ge 1$ such that
\begin{align*}
\psi_{\mP}(\lambda) - \KL( \mP \| \mQ) \cdot \lambda &\le C \cdot \KL( \mP \| \mQ) \cdot \lambda^2 \quad \textnormal{for all } \lambda \in [-1, 0] \\
\psi_{\mQ}(\lambda) + \KL( \mQ \| \mP) \cdot \lambda &\le C \cdot \KL( \mQ \| \mP) \cdot \lambda^2 \quad \textnormal{for all } \lambda \in [-1, 1]
\end{align*}
\end{definition}

Our last universality class ensures that the information-theoretic lower bound that we show in Section \ref{sec:itlowerbounds} matches the upper bound from Section \ref{sec:searchtest}. Let $\SKL(\mP, \mQ) = \KL(\mP \| \mQ) + \KL(\mQ \| \mP)$ denote symmetric KL divergence.

\begin{definition}[Universality Class $\pr{uc-c}$]
Define $(\mP, \mQ)$ to be in the universality class $\textsc{uc-c}$ if $(\mP, \mQ)$ there is a constant $C' > 0$ such that $\chi^2(\mP \| \mQ) \le C' \cdot \SKL( \mP, \mQ)$.
\end{definition}

In Sections \ref{sec:compbarriers} and \ref{sec:statbarriers}, we specialize our general theorems on information-theoretic and computational upper and lower bounds to these three universality classes. This yields the following characterization of the computational phase diagram for $\textsc{ssd}(n, k, \mP, \mQ)$ when $(\mP, \mQ)$ is in $\pr{uc-a}$, $\pr{uc-b}$ and $\pr{uc-c}$. These regimes are depicted in Figure \ref{fig:spcaphasediagram}. Here, $\ll$ hides factors that are sub-polynomial $n$.

\begin{theorem}[Submatrix Detection Phase Diagram]
The regions of the computational phase diagram in $\textsc{ssd}(n, k, \mP, \mQ)$ are:
\begin{itemize}
\item (Statistically Impossible) if $(\mP, \mQ)$ is in $\pr{uc-c}$ then $\pr{ssd}$ is impossible if
$$\SKL(\mP, \mQ) \ll \frac{1}{k} \wedge \frac{n^2}{k^4}$$
\item ($\textsc{pc}$-Hard) if $(\mP, \mQ)$ is in $\pr{uc-a}$ then $\pr{ssd}$ is $\pr{pc}$-hard but possible if
$$\frac{1}{k} \wedge \frac{n^2}{k^4} \ll \SKL(\mP, \mQ) \ll \frac{n^2}{k^4} \wedge 1$$
\item (Polynomial Time Algorithms) if $(\mP, \mQ)$ is in $\pr{uc-b}$ then $\pr{ssd}$ can be solved in $\textnormal{poly}(n)$ time if
$$\frac{n^2}{k^4} \wedge 1 \ll \SKL(\mP, \mQ)$$
\end{itemize}
\end{theorem}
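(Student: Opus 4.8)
The plan is to assemble the phase diagram theorem from the three separately stated ingredients: the information-theoretic lower bound (Section \ref{sec:itlowerbounds}), the computational lower bound (the Main Computational Lower Bounds theorem and its general version in Section \ref{sec:compbarriers}), and the achievability results from the simple test statistics (Sections \ref{sec:polytests} and \ref{sec:searchtest}). Each bullet of the theorem corresponds to one regime of $(\alpha, \beta)$ when $k = \tilde\Theta(n^\beta)$ and $\SKL(\mP, \mQ) = \tilde\Theta(n^{-\alpha})$, so the proof is essentially a dictionary translation between the abstract conditions on $E_\mP$, $E_\mQ$, $\KL$, $\chi^2$ appearing in those earlier theorems and the clean threshold conditions on $\SKL$ stated here. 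First I would fix the parametrization and record the basic relations: under $\pr{uc-b}$ and $\pr{uc-c}$, all three of $\KL(\mP\|\mQ)$, $\KL(\mQ\|\mP)$, $\chi^2(\mP\|\mQ)$ and hence $\SKL(\mP,\mQ)$ are of the same polynomial order $\tilde\Theta(n^{-\alpha})$, so I can freely pass between them up to sub-polynomial factors; this is the observation that makes the single scalar $\SKL$ suffice to describe the diagram.

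For the statistically impossible regime, I would invoke the information-theoretic lower bound of Section \ref{sec:itlowerbounds}. That bound should say that if $\SKL(\mP,\mQ)$ (equivalently $\chi^2$, using $\pr{uc-c}$ to control $\chi^2$ by $\SKL$) is small enough relative to both $1/k$ and $n^2/k^4$, then the planted and null distributions are asymptotically indistinguishable in total variation — the second-moment or $\chi^2$ computation over the random choice of $S$ contributes a factor like $\binom{n}{k}^{-1}\sum$ which stays bounded precisely when $k^2 \cdot \chi^2 \ll \log(n/k)$ (the $1/k$-type term, after accounting for the $k^2$ entries and the $\binom nk$ planting locations) and $k^4 \cdot \chi^2 \ll n^2$ (the $n^2/k^4$ term from the overlap expansion). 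Matching these two constraints to $\SKL \ll \frac1k \wedge \frac{n^2}{k^4}$ is the translation step here; the role of $\pr{uc-c}$ is exactly to upgrade a $\chi^2$-based impossibility into an $\SKL$-based one. For the polynomial-time regime, I would exhibit the two tests from Sections \ref{sec:polytests} and \ref{sec:searchtest}: a sum/thresholding-type test on the full matrix that succeeds when $n^2 \cdot \SKL \gg k^4$ (so $\SKL \gg n^2/k^4$), and a linear-scan or maximum-row-sum test that succeeds when $k^2 \cdot \SKL \gg \log n$, i.e. when $\SKL \gg 1$ in the $k = \Omega(\sqrt n)$ range, using $\pr{uc-b}$ to furnish the sub-Gaussian-type tail control on the LLR needed to analyze these statistics. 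Their union covers $\frac{n^2}{k^4}\wedge 1 \ll \SKL$.

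For the $\pr{pc}$-hard regime, I would apply the Main Computational Lower Bounds theorem (and its $\pr{pds}$-based generalization) in both the $k = \Omega(\sqrt n)$ and $k = o(\sqrt n)$ cases. The key is that membership in $\pr{uc-a}$ is designed to supply exactly the LDP hypothesis $E_\mP(n^\epsilon \KL) = \Omega(n^\epsilon \KL \log n)$ required in the first bullet of that theorem, and — after reducing to constant edge density $p$ — a similar bound at the fixed point $\log p^{-1}$ in the second bullet, since when $\KL = \tilde\Theta(n^{-\alpha})$ with $\alpha > 0$ and $k = o(\sqrt n)$ one is in a regime where $\KL$ is bounded and the $\pr{uc-a}$ scaling of $E_\mP$ still applies at the relevant scale. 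Combined with the trivial statistical achievability on the complement (the problem is information-theoretically possible once $\SKL \gg \frac1k \wedge \frac{n^2}{k^4}$, by the matching upper bound to the Section \ref{sec:itlowerbounds} lower bound), this gives ``$\pr{pc}$-hard but possible'' in the band $\frac1k \wedge \frac{n^2}{k^4} \ll \SKL \ll \frac{n^2}{k^4}\wedge 1$. I expect the main obstacle to be bookkeeping at the regime boundaries: verifying that the $\pr{uc-a}$ condition genuinely implies the LDP needed at the precise value of $m$ (resp. at $\log p^{-1}$) dictated by the target lower bound for each $\beta$, and checking that the sub-polynomial slack in $\ll$ is consistent across the information-theoretic, computational, and algorithmic thresholds so that the three regions tile the diagram without gaps or overlaps. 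The substantive mathematics all lives in the earlier sections; the work here is to confirm the abstract hypotheses specialize correctly and that the $\SKL$-thresholds are the right common language.
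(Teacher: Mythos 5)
Your proposal takes essentially the same route as the paper: the theorem is proved by specializing Corollary \ref{cor:itlowerbounds} via the $\pr{uc-c}$ bound $\chi^2(\mP\|\mQ) \le C'\cdot\SKL(\mP,\mQ)$, the $\pr{uc-a}$ computational lower bound corollary together with the search-test achievability, and the $\pr{uc-b}$ algorithmic upper bounds (sum and max tests), exactly as you describe. Two minor slips in your parenthetical justifications are worth fixing, though neither affects the argument since you are citing the established results rather than re-deriving them: the $\frac{1}{k}$-branch of the impossibility threshold comes from $k\cdot\chi^2(\mP\|\mQ) \ll \log(n/k)$ (not $k^2\cdot\chi^2$), and the max test $T_{\textnormal{max}}$ is the one deployed in the $k = o(\sqrt{n})$ range (where $\frac{n^2}{k^4}\wedge 1 = 1$), while the sum test covers $k = \Omega(\sqrt{n})$; as written, your assignment of tests to ranges would leave the band $k = o(\sqrt{n})$, $1 \ll \SKL(\mP,\mQ) \ll \frac{n^2}{k^4}$ uncovered, but swapping the labels restores the tiling.
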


In Section \ref{sec:universalityclasses}, we discuss the three universality classes $\pr{uc-a}$, $\pr{uc-b}$ and $\pr{uc-c}$ and sub-classes of distributions that they contain. For example, these three classes contain the following pairs $(\mP, \mQ)$:
\begin{itemize}
\item Pairs $(\mP, \mQ)$ with sub-Gaussian LLR i.e. satisfying $L = \log \frac{d\mP}{d\mQ}(x)$ for $x \sim \mQ$ is sub-Gaussian. 
\item Pairs $(\mP, \mQ)$ with bounded LLR i.e. satisfying $L = \log \frac{d\mP}{d\mQ}(x)$ for $x \sim \mQ$ is bounded almost surely.
\item A wide variety of other pairs $(\mP, \mQ)$ from a common exponential family. The computations in Appendix B of \cite{hajek2017information} provide a simple method for determining if pairs $(\mP, \mQ)$ from an exponential family are in $\pr{uc-b}$. This same method can be applied to check membership in $\pr{uc-a}$ and $\pr{uc-c}$.
\end{itemize}
The class $\pr{uc-b}$ is discussed at length in Sections 2.1, 3 and Appendix B of \cite{hajek2017information}. In Section \ref{sec:universalityclasses}, we observe that these properties imply that the following three important computable pairs are in all three of the classes $\pr{uc-a}$, $\pr{uc-b}$ and $\pr{uc-c}$:
\begin{enumerate}
\item[($\mD_{\textsc{bc}}$)] $\mP = \mN(\mu, 1)$ and $\mQ = \mN(0, 1)$ where $\mu = n^{-\alpha}$ for some $\alpha > 0$, in which case $\pr{ssd}$ corresponds to Gaussian biclustering;
\item[($\mD_{\textsc{sp}}$)] $\mP = \text{Bern}(p)$ and $\mQ = \text{Bern}(q)$ where $p = cq = cn^{-\alpha}$ for some constant $c > 1$ and $\alpha > 0$, in which case the above diagonal entries of $\pr{ssd}$ are the adjacency matrix of an instance of sparse planted dense subgraph; and
\item[($\mD_{\textsc{gp}}$)] $\mP = \text{Bern}(p)$ and $\mQ = \text{Bern}(q)$ where $p = q + \Theta(n^{-\gamma})$ and $q = n^{-\alpha}$ for some constants $\gamma > \alpha > 0$, in which case the above diagonal entries of $\pr{ssd}$ are the adjacency matrix of an instance of general planted dense subgraph.
\end{enumerate}
The fact that these three examples are in our universality classes implies that our average-case reduction recovers and generalizes the planted clique lower bounds for Gaussian biclustering in \cite{ma2015computational, brennan2018reducibility} and for the sparse and general regimes of planted dense subgraph in \cite{hajek2015computational, brennan2018reducibility}. For the two graph problems, this is achieved by constructing a graph from the above diagonal terms of the matrix output by the reduction.

\section{Average-Case Reductions in Total Variation}
\label{sec:avgred}

\subsection{Reductions in Total Variation and the Computational Model}

As introduced in \cite{berthet2013complexity} and \cite{ma2015computational}, we give approximate reductions in total variation to show that lower bounds for one hypothesis testing problem imply lower bounds for another. These reductions yield an exact correspondence between the asymptotic Type I$+$II errors of the two problems. This is formalized in the following lemma, which is Lemma 3.1 from \cite{brennan2018reducibility} specialized to the case of simple vs. simple hypothesis testing. Its proof is short and follows from the definition of total variation.

\begin{lemma}[Lemma 3.1 in \cite{brennan2018reducibility}] \label{lem:3a}
Let $\mP_D$ and $\mP'_D$ be detection problems with hypotheses $H_0, H_1$ and  $H_0', H_1'$, respectively. Let $X$ be an instance of $\mathcal{P}_D$ and let $Y$ be an instance of $\mP'_D$. Suppose there is a polynomial time computable map $\mathcal{A}$ satisfying
$$\TV\left(\mL_{H_0}(\mathcal{A}(X)), \mL_{H_0'}(Y)\right) + \TV\left(\mL_{H_1}(\mathcal{A}(X)), \mL_{H_1'}(Y)\right) \le \delta$$
If there is a randomized polynomial time algorithm solving $\mP'_D$ with Type I$+$II error at most $\epsilon$, then there is a randomized polynomial time algorithm solving $\mP_D$ with Type I$+$II error at most $\epsilon + \delta$.
\end{lemma}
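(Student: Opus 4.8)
The plan is to compose the reduction $\mathcal{A}$ with a hypothetical fast algorithm for $\mathcal{P}'_D$ and to control the resulting Type I$+$II error using the fact that total variation contracts under randomized maps. Suppose $\mathcal{B}$ is a randomized polynomial time algorithm solving $\mathcal{P}'_D$ with Type I$+$II error at most $\epsilon$, so that $\bP_{H_0'}[\mathcal{B}(Y) = 1] + \bP_{H_1'}[\mathcal{B}(Y) = 0] \le \epsilon$. Define the algorithm $\mathcal{C}(X) := \mathcal{B}(\mathcal{A}(X))$ for $\mathcal{P}_D$, where $\mathcal{A}$ and $\mathcal{B}$ draw independent internal randomness. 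Since $\mathcal{A}$ and $\mathcal{B}$ each run in polynomial time, so does $\mathcal{C}$, and it remains only to bound its error.

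The key observation is that $\mathcal{B}$, viewed as the Markov kernel $y \mapsto \mL(\mathcal{B}(y))$ acting on an input law, cannot increase total variation: for inputs $U \sim \mu$ and $V \sim \nu$ one has $\TV(\mL(\mathcal{B}(U)), \mL(\mathcal{B}(V))) \le \TV(\mu, \nu)$. Applying this first with $\mu = \mL_{H_0}(\mathcal{A}(X))$, $\nu = \mL_{H_0'}(Y)$ and then with $\mu = \mL_{H_1}(\mathcal{A}(X))$, $\nu = \mL_{H_1'}(Y)$, and using the hypothesis on $\mathcal{A}$, gives
\[
\TV\!\left(\mL_{H_0}(\mathcal{C}(X)), \mL_{H_0'}(\mathcal{B}(Y))\right) + \TV\!\left(\mL_{H_1}(\mathcal{C}(X)), \mL_{H_1'}(\mathcal{B}(Y))\right) \le \delta.
\]
Both $\mathcal{C}(X)$ and $\mathcal{B}(Y)$ take values in $\{0,1\}$, and for two such $\{0,1\}$-valued laws the total variation equals the absolute difference of the probabilities of the outcome $1$ (equivalently of the outcome $0$). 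Hence $\left|\bP_{H_0}[\mathcal{C}(X)=1] - \bP_{H_0'}[\mathcal{B}(Y)=1]\right|$ and $\left|\bP_{H_1}[\mathcal{C}(X)=0] - \bP_{H_1'}[\mathcal{B}(Y)=0]\right|$ are each bounded by the corresponding term above.

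Summing these two estimates and using the triangle inequality on $\bR$,
\[
\bP_{H_0}[\mathcal{C}(X)=1] + \bP_{H_1}[\mathcal{C}(X)=0] \le \bP_{H_0'}[\mathcal{B}(Y)=1] + \bP_{H_1'}[\mathcal{B}(Y)=0] + \delta \le \epsilon + \delta,
\]
which is exactly the claimed bound. There is no substantive obstacle: the statement is a foundational lemma, and the only points requiring care are (i) formalizing the randomized algorithms as Markov kernels with independent randomness so that the data-processing inequality for $\TV$ applies cleanly to the composition, and (ii) the elementary identity converting a $\TV$ bound between accept/reject decisions into a bound on the sum of acceptance and rejection probabilities. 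As an alternative to (i)–(ii), one can argue directly from the variational formula $\TV(\mu,\nu) = \sup_{0 \le f \le 1}\left(\bE_\mu f - \bE_\nu f\right)$, applied to the test function $f(y) = \bP[\mathcal{B}(y) = 1]$ and to $1-f$, which yields the same two-sided control on the acceptance probabilities and hence the same conclusion.
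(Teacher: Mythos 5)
Your proof is correct and is exactly the short argument the paper has in mind (the paper itself omits the proof, remarking only that it ``is short and follows from the definition of total variation,'' deferring to Lemma 3.1 of \cite{brennan2018reducibility}): compose the reduction with the blackbox solver, apply the data-processing inequality for $\TV$ under each hypothesis, and convert the $\TV$ bound between the $\{0,1\}$-valued outputs into a bound on the sum of Type I and Type II errors. No gaps.
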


If $\delta = o(1)$, then given a blackbox solver $\mathcal{B}$ for $\mathcal{P}'_D$, the algorithm that applies $\mathcal{A}$ and then $\mathcal{B}$ solves $\mathcal{P}_D$ and requires only a single query to the blackbox. An algorithm that runs in randomized polynomial time refers to one that has access to $\text{poly}(n)$ independent random bits and must run in $\text{poly}(n)$ time where $n$ is the size of the instance of the problem. For clarity of exposition, in our reductions we assume that explicit expressions can be exactly computed and that we can sample a biased random bit $\text{Bern}(p)$ in polynomial time. We also assume that the oracles described in Definition \ref{def:computable} can be computed in $\text{poly}(n)$ time.

\subsection{Properties of Total Variation}

Throughout the proof of our main theorem, we will use the following well-known facts and inequalities concerning total variation distance.

\begin{fact} \label{tvfacts}
The distance $\TV$ satisfies the following properties:
\begin{enumerate}
\item (Triangle Inequality) Given three distributions $P, Q$ and $R$ on a measurable space $(\mathcal{X}, \mathcal{B})$, it follows that
$$\TV\left( P, Q \right) \le \TV\left( P, R \right) + \TV\left( Q, R \right)$$
\item (Data Processing) Let $P$ and $Q$ be distributions on a measurable space $(\mathcal{X}, \mathcal{B})$ and let $f : \mathcal{X} \to \mathcal{Y}$ be a Markov transition kernel. If $A \sim P$ and $B \sim Q$ then
$$\TV\left(\mL(f(A)), \mL(f(B))\right) \le \TV(P, Q)$$
\item (Tensorization) Let $P_1, P_2, \dots, P_n$ and $Q_1, Q_2, \dots, Q_n$ be distributions on a measurable space $(\mathcal{X}, \mathcal{B})$. Then
$$\TV\left( \prod_{i = 1}^n P_i, \prod_{i = 1}^n Q_i \right) \le \sum_{i = 1}^n \TV\left( P_i, Q_i \right)$$
\item (Conditioning on an Event) For any distribution $P$ on a measurable space $(\mathcal{X}, \mathcal{B})$ and event $A \in \mathcal{B}$, it holds that
$$\TV\left( P(\cdot | A), P \right) = 1 - P(A)$$
\item (Conditioning on a Random Variable) For any two pairs of random variables $(X, Y)$ and $(X', Y')$ each taking values in a measurable space $(\mathcal{X}, \mathcal{B})$, it holds that
$$\TV\left( \mL(X), \mL(X') \right) \le \TV\left( \mL(Y), \mL(Y') \right) + \bE_{y \sim Y} \left[ \TV\left( \mL(X | Y = y), \mL(X' | Y' = y) \right)\right]$$
where we define $\TV\left( \mL(X | Y = y), \mL(X' | Y' = y) \right) = 1$ for all $y \not \in \textnormal{supp}(Y')$.
\end{enumerate}
\end{fact}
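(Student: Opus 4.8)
The plan is to derive all five properties from the single variational identity $\TV(P, Q) = \sup_{A \in \mathcal{B}} |P(A) - Q(A)| = \sup_{0 \le g \le 1} \left( \int g \, dP - \int g \, dQ \right)$, where the second equality follows by approximating a bounded measurable $g$ by simple functions and observing that the supremum over $[0,1]$-valued functions is attained on indicators. Properties (1) and (2) are then immediate. For (1), $|P(A) - Q(A)| \le |P(A) - R(A)| + |R(A) - Q(A)| \le \TV(P, R) + \TV(Q, R)$ for every $A \in \mathcal{B}$, and one takes the supremum over $A$. For (2), writing the transition kernel as $K(x, \cdot)$ so that $\mL(f(A))(B) = \int K(x, B) \, dP(x)$, the map $x \mapsto K(x, B)$ is measurable and $[0,1]$-valued, whence $\mL(f(A))(B) - \mL(f(B))(B) = \int K(\cdot, B) \, d(P - Q) \le \TV(P, Q)$; by symmetry and taking the supremum over $B$ this finishes it, and the deterministic case is the special choice $K(x, \cdot) = \delta_{f(x)}$.

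For (3), I would first record that $\TV(P \otimes R, Q \otimes R) = \TV(P, Q)$ for any common factor $R$: passing to densities with respect to a dominating product measure, the $R$-density factors out of the $L^1$ integral. The general bound then follows by telescoping through the hybrid laws $H_i = Q_1 \otimes \cdots \otimes Q_i \otimes P_{i+1} \otimes \cdots \otimes P_n$, noting $H_0 = \prod_i P_i$ and $H_n = \prod_i Q_i$, that $\TV(H_{i-1}, H_i) = \TV(P_i, Q_i)$ since $H_{i-1}$ and $H_i$ differ only in coordinate $i$ and share the remaining factors, and that $\TV(\prod_i P_i, \prod_i Q_i) \le \sum_{i=1}^n \TV(H_{i-1}, H_i)$ by iterating (1). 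For (4), $P(\cdot \mid A)$ has Radon--Nikodym derivative $\mathbf{1}_A / P(A)$ with respect to $P$, so $\TV(P(\cdot \mid A), P) = \tfrac12 \int \left| \mathbf{1}_A / P(A) - 1 \right| dP$, which splits into the contribution $P(A) \cdot \frac{1 - P(A)}{P(A)} = 1 - P(A)$ coming from $A$ and the contribution $P(A^c) = 1 - P(A)$ coming from $A^c$, giving $\tfrac12 \cdot 2(1 - P(A)) = 1 - P(A)$; equivalently, the event $A$ itself realizes the supremum in the variational identity.

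The substantive part is (5), which I would prove by interposing an auxiliary law. Fix regular conditional distributions $\mL(X \mid Y = \cdot)$ and $\mL(X' \mid Y' = \cdot)$ — these are assumed available, as is implicit in the statement, and the latter may be specified arbitrarily off $\mathrm{supp}(Y')$ — and let $\widetilde{\mL}$ be the law obtained by drawing $y \sim \mL(Y)$ and then sampling from $\mL(X' \mid Y' = y)$. By (1), $\TV(\mL(X), \mL(X')) \le \TV(\mL(X), \widetilde{\mL}) + \TV(\widetilde{\mL}, \mL(X'))$. For the second term, $\widetilde{\mL}$ and $\mL(X')$ are the pushforwards of $\mL(Y)$ and $\mL(Y')$ under the same Markov kernel $y \mapsto \mL(X' \mid Y' = y)$ — that the pushforward of $\mL(Y')$ is $\mL(X')$ is exactly the defining property of the regular conditional distribution, and $\mL(Y')$ assigns no mass to $\mathrm{supp}(Y')^c$ — so (2) gives $\TV(\widetilde{\mL}, \mL(X')) \le \TV(\mL(Y), \mL(Y'))$. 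For the first term, $\mL(X)$ and $\widetilde{\mL}$ are mixtures over the common mixing law $\mL(Y)$, of $\mL(X \mid Y = y)$ and $\mL(X' \mid Y' = y)$ respectively, so the variational identity yields $\TV(\mL(X), \widetilde{\mL}) = \sup_A \left| \int \big( \mL(X \mid Y = y)(A) - \mL(X' \mid Y' = y)(A) \big) \, d\mL(Y)(y) \right| \le \int \TV(\mL(X \mid Y = y), \mL(X' \mid Y' = y)) \, d\mL(Y)(y) = \bE_{y \sim Y}\left[ \TV(\mL(X \mid Y = y), \mL(X' \mid Y' = y)) \right]$, where on $\mathrm{supp}(Y')^c$ the kernel value is arbitrary but $\TV \le 1$, matching the convention in the statement. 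Adding the two bounds gives the claim. I expect the only real obstacle here to be the measure-theoretic bookkeeping — ensuring the regular conditional distributions exist and are jointly measurable in $y$ so that the supremum may be pushed through the integral, and handling the convention for $y \notin \mathrm{supp}(Y')$ — while (1)--(4) are entirely routine.
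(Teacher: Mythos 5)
Your proof is correct and complete; note that the paper itself states Fact \ref{tvfacts} without proof, citing these as well-known properties, so there is no argument in the paper to compare against. Your derivations are the standard ones -- the variational characterization of $\TV$ for (1), (2) and (4), the hybrid/telescoping argument for (3), and the decomposition through the kernel $y \mapsto \mL(X' \mid Y' = y)$ combined with data processing and convexity of $\TV$ under mixtures for (5) -- and the only caveats (existence and measurability of the regular conditional distributions, and the convention for $y \notin \textnormal{supp}(Y')$), which you flag explicitly, are implicit in the statement itself.
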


Given an algorithm $\mathcal{A}$ and distribution $\mP$ on inputs, let $\mathcal{A}(\mP)$ denote the distribution of $\mathcal{A}(X)$ induced by $X \sim \mP$. If $\mathcal{A}$ has $k$ steps, let $\mathcal{A}_i$ denote the $i$th step of $\mathcal{A}$ and $\mathcal{A}_{i\text{-}j}$ denote the procedure formed by steps $i$ through $j$. Each time this notation is used, we clarify the intended initial and final variables when $\mathcal{A}_{i}$ and $\mathcal{A}_{i\text{-}j}$ are viewed as Markov kernels. The next lemma encapsulates the structure of all of our analyses of average-case reductions.

\begin{lemma} \label{lem:tvacc}
Let $\mathcal{A}$ be an algorithm that can be written as $\mathcal{A} = \mathcal{A}_m \circ \mathcal{A}_{m-1} \circ \cdots \circ \mathcal{A}_1$ for a sequence of steps $\mathcal{A}_1, \mathcal{A}_2, \dots, \mathcal{A}_m$. Suppose that the probability distributions $\mP_0, \mP_1, \dots, \mP_m$ are such that $\TV(\mathcal{A}_i(\mP_{i-1}), \mP_i) \le \epsilon_i$ for each $1 \le i \le m$. Then it follows that
$$\TV\left( \mathcal{A}(\mP_0), \mP_m \right) \le \sum_{i = 1}^m \epsilon_i$$
\end{lemma}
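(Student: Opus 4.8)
The plan is to prove this by induction on $m$, using the triangle inequality for total variation (Fact \ref{tvfacts}(1)) together with the data processing inequality (Fact \ref{tvfacts}(2)) to control the error accumulated at each step of the composition.

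First I would set up the inductive hypothesis: for each $0 \le j \le m$, the partial composition $\mathcal{A}_{1\text{-}j} = \mathcal{A}_j \circ \cdots \circ \mathcal{A}_1$ satisfies $\TV(\mathcal{A}_{1\text{-}j}(\mP_0), \mP_j) \le \sum_{i=1}^j \epsilon_i$, with the base case $j = 0$ being the trivial statement $\TV(\mP_0, \mP_0) = 0$. For the inductive step, I would write $\mathcal{A}_{1\text{-}j}(\mP_0) = \mathcal{A}_j\bigl(\mathcal{A}_{1\text{-}(j-1)}(\mP_0)\bigr)$ and apply the triangle inequality to insert the intermediate distribution $\mathcal{A}_j(\mP_{j-1})$:
$$\TV\left( \mathcal{A}_{1\text{-}j}(\mP_0), \mP_j \right) \le \TV\left( \mathcal{A}_j(\mathcal{A}_{1\text{-}(j-1)}(\mP_0)), \mathcal{A}_j(\mP_{j-1}) \right) + \TV\left( \mathcal{A}_j(\mP_{j-1}), \mP_j \right).$$
The second term on the right is at most $\epsilon_j$ by hypothesis. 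For the first term, I would treat $\mathcal{A}_j$ as a Markov transition kernel and invoke data processing (Fact \ref{tvfacts}(2)) to bound it by $\TV(\mathcal{A}_{1\text{-}(j-1)}(\mP_0), \mP_{j-1})$, which by the inductive hypothesis is at most $\sum_{i=1}^{j-1} \epsilon_i$. Adding the two bounds gives $\sum_{i=1}^{j} \epsilon_i$, completing the induction, and taking $j = m$ yields the claim.

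The only mild subtlety — and the closest thing to an obstacle — is justifying that each step $\mathcal{A}_i$ may legitimately be viewed as a Markov kernel so that the data processing inequality applies; since the $\mathcal{A}_i$ are the steps of a (randomized) algorithm, each maps an input distribution to an output distribution via a fixed conditional law, which is exactly a Markov kernel, so data processing is available at each stage. Everything else is a routine unwinding of the composition, and no calculation beyond summing the $\epsilon_i$ is required.
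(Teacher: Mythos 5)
Your proof is correct and follows essentially the same argument as the paper: an induction on the number of steps in which the last map is peeled off, the triangle inequality inserts the intermediate distribution $\mathcal{A}_j(\mP_{j-1})$, and the data-processing inequality (with $\mathcal{A}_j$ viewed as a Markov kernel) controls the first term. The only cosmetic difference is that you phrase the induction over the prefix compositions $\mathcal{A}_{1\text{-}j}$ while the paper inducts directly on $m$, which is the same argument.
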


\begin{proof}
This follows from a simple induction on $m$. Note that the case when $m = 1$ follows by definition. Now observe that by the data-processing and triangle inequalities in Fact \ref{tvfacts}, we have that if $\mathcal{B} = \mathcal{A}_{m-1} \circ \mathcal{A}_{m-2} \circ \cdots \circ \mathcal{A}_1$ then
\begin{align*}
\TV\left( \mathcal{A}(\mP_0), \mP_m \right) &\le \TV\left( \mathcal{A}_m \circ \mathcal{B}(\mP_0), \mathcal{A}_m(\mP_{m - 1}) \right) + \TV\left(\mathcal{A}_m(\mP_{m - 1}), \mP_m \right) \\
&\le \TV\left( \mathcal{B}(\mP_0), \mP_{m - 1} \right) + \epsilon_m \\
&\le \sum_{i = 1}^m \epsilon_i
\end{align*}
where the last inequality follows from the induction hypothesis applied with $m - 1$ to $\mathcal{B}$. This completes the induction and proves the lemma.
\end{proof}

\section{Multivariate Rejection Kernels}
\label{sec:mrklarge}

\subsection{General MRK Algorithm and Analysis}

\begin{figure}[t!]
\begin{algbox}
\textbf{Algorithm} \textsc{mrk}$(B)$

\vspace{2mm}

\textit{Parameters}: Input $B \in \{0, 1\}$, parameter $n$, number of iterations $N$, dimension $\ell = \text{poly}(n)$, Bernoulli probabilities $0 < q < p \le 1$ and $\ell$ pairs of computable sequences of distributions $(\mP^i, \mQ^i)$ for $i \in \{ 1, 2, \dots, \ell \}$ over the measurable space $(X, \mathcal{B})$
\begin{enumerate}
\item Initialize $z = (z_1, z_2, \dots, z_\ell)$ arbitrarily in the support of $\mQ_n^1 \otimes \mQ_n^2 \otimes \cdots \otimes \mQ_n^\ell$
\item Until $z$ is set or $N$ iterations have elapsed:
\begin{enumerate}
\item[(1)] Form $z' = (z_1', z_2', \dots, z_\ell')$ where $z'_i \sim \mQ_n^i$ is sampled independently for each $i \in \{ 1, 2, \dots, \ell \}$ and compute the log-likelihood ratio
$$L_n(z') = \sum_{i = 1}^\ell \log \frac{d\mP^i_n}{d \mQ^i_n} (z_i')$$
\item[(2)] Proceed to the next iteration if it does not hold that
$$\log \left( \frac{1 - p}{1 - q} \right) \le L_n(z') \le \log \left( \frac{p}{q} \right)$$
\item[(3)] If $B = 0$, then set $z \gets z'$ with probability $1 - \frac{q}{p} \cdot \exp(L_n(z'))$
\item[(4)] If $B = 1$, then set $z \gets z'$ with probability $\frac{q}{p} \cdot \exp(L_n(z')) - \frac{q(1 - p)}{p(1 - q)}$
\end{enumerate}
\item Output $z$
\end{enumerate}
\vspace{1mm}
\end{algbox}
\caption{Multivariate rejection kernel algorithm with Bernoulli input $B \in \{0, 1\}$.}
\label{fig:rej-kernel}
\end{figure}

In this section, we introduce multivariate rejection kernels, which effectively perform an algorithmic change of measure simultaneously with a lift. More precisely, the map $\textsc{mrk}(B)$ sends a binary input $B \in \{0, 1\}$ to a higher dimensional space $X^\ell$ simultaneously satisfying two Markov transition properties:
\begin{enumerate}
\item if $B \sim \text{Bern}(p)$, then $\textsc{mrk}(B)$ is close to $\mP_n^1 \otimes \mP_n^2 \otimes \cdots \otimes \mP_n^\ell$ in total variation; and
\item if $B \sim \text{Bern}(q)$, then $\textsc{mrk}(B)$ is close to $\mQ_n^1 \otimes \mQ_n^2 \otimes \cdots \otimes \mQ_n^\ell$ in total variation.
\end{enumerate}
where $0 < q < p \le 1$ are fixed and $(\mP^i, \mQ^i)$ are pairs of computable sequences of distributions over a common measurable space $(X, \mathcal{B})$ for $i \in \{ 1, 2, \dots, \ell \}$. The maps $\textsc{mrk}$ will be a key part of our reduction to $\pr{ssd}$, lifting planted dense subgraph and planted clique instances to instances of submatrix detection with independent entries and the correct marginals.

These maps $\textsc{mrk}$ use rejection sampling to transform the distribution of the input from either one of two input Bernoulli distributions to two $\ell$-fold product distributions, without knowing which of the two distributions was the input. Multivariate rejection kernels are closely related to previous average-case reduction techniques in the literature. Univariate Markov transitions taking $\delta_1$ and $\text{Bern}(q)$ to either a fixed pair of Bernoulli distributions or a fixed pair of normal distributions were introduced in \cite{ma2015computational} and \cite{gao2017sparse}. More closely related to our map are the rejection kernel framework introduced in \cite{brennan2018reducibility} and the reduction from planted clique to planted dense subgraph in \cite{hajek2015computational}. The rejection kernels in \cite{brennan2018reducibility} used a similar rejection sampling scheme as we use here to map binary inputs to distributions on $\mathbb{R}$. In \cite{brennan2018reducibility}, it is assumed that both of the target distributions $f_X$ and $g_X$ have explicit density or mass functions and have sampling oracles. In contrast, our map $\textsc{mrk}$ only requires the Radon-Nikodym derivatives $\frac{d\mP_n^i}{d\mQ_n^i}$ exist and that there is a sampling oracle for the target noise distributions $\mQ_n^i$ rather than for $\mP_n^i$ as well, keeping the assumptions on $(\mP^i, \mQ^i)$ relatively minimal.

The reduction in \cite{hajek2015computational} relies on a multivariate Markov transition mapping $\text{Bern}(1)$ and $\text{Bern}(q)$ to distributions of the form $\text{Bern}(cQ)^{\otimes \ell}$ and $\text{Bern}(Q)^{\otimes \ell}$ for some constant $c > 1$. This is achieved by first performing a univariate map from $\text{Bern}(1)$ and $\text{Bern}(q)$ to $\text{Bin}(\ell, cQ)$ and $\text{Bin}(\ell, Q)$, respectively, and then observing that these counts are sufficient statistics for the target product distributions. As discussed in the survey \cite{wu2018statistical}, this approach extends naturally to other target distributions that have a common sufficient statistic with an explicit mass or density function. For example, the sum of entries is such a sufficient statistic for the family of distribution $\mN(\mu, 1)^{\otimes \ell}$ for $\mu \in \mathbb{R}$. On generalizing to any computable pairs $(\mP^i, \mQ^i)$, such a sufficient statistic common to $\mP_n^1 \otimes \mP_n^2 \otimes \cdots \otimes \mP_n^\ell$ and $\mQ_n^1 \otimes \mQ_n^2 \otimes \cdots \otimes \mQ_n^\ell$ may not exist. We remark that all of the univariate maps in \cite{ma2015computational}, \cite{gao2017sparse} and \cite{hajek2015computational} rely on sampling explicit mass or density functions that are linear combinations of the target distributions. This requires time at least the size of the support of the target distribution or of a sufficiently fine net of its support. In the multivariate case mapping to distributions on $X^\ell$, the size of this support grows exponentially in $\ell$ and this sampling scheme is not feasible in polynomial time. The rejection sampling scheme used here circumvents this issue. We now prove the main total variation guarantees of $\textsc{mrk}(B)$, generalizing the argument in Lemma 5.1 in \cite{brennan2018reducibility}.

\begin{lemma}[Multivariate Rejection Kernels] \label{lem:mrk}
Suppose that $n$ is a parameter, $0 < q < p \le 1$ and $N$ is a positive integer. Let $(\mP^i, \mQ^i)$ for $i \in \{ 1, 2, \dots, \ell \}$ be pairs of computable sequences of distributions over the measurable space $(X, \mathcal{B})$ and let
$$S = \left\{x \in X^\ell : \log \left( \frac{1 - p}{1 - q} \right) \le L_n(x) \le \log \left( \frac{p}{q} \right) \right\}$$
where $L_n(x) = \sum_{i = 1}^\ell \log \frac{d\mP^i_n}{d \mQ^i_n} (x_i)$ for each $x = (x_1, x_2, \dots, x_\ell) \in X^\ell$. Then there is a map $\textsc{mrk} : \{0, 1\} \to X^\ell$ that can be computed in $\textnormal{poly}(n, \ell, N)$ time satisfying that
\begin{align*}
\TV\left( \textsc{mrk}(\textnormal{Bern}(p)), \mP_n^1 \otimes \mP_n^2 \otimes \cdots \otimes \mP_n^\ell \right) &\le \Delta \\
\TV\left( \textsc{mrk}(\textnormal{Bern}(q)), \mQ_n^1 \otimes \mQ_n^2 \otimes \cdots \otimes \mQ_n^\ell \right) &\le \Delta
\end{align*}
where if $\mP^*_n = \mP_n^1 \otimes \mP_n^2 \otimes \cdots \otimes \mP_n^\ell$ and $\mQ^*_n = \mQ_n^1 \otimes \mQ_n^2 \otimes \cdots \otimes \mQ_n^\ell$, then $\Delta$ is given by
\begin{align*}
\Delta &= \frac{\bP_{x \sim \mQ^*_n}[x \not \in S] + \bP_{x \sim \mP^*_n}[x \not \in S]}{p - q} \\
&\quad \quad + \max \left\{ \left( \bP_{x \sim \mQ_n^*}[x \not \in S] + \frac{q}{p} \right)^N, \, \left( \frac{q}{p} \cdot \bP_{x \sim \mP_n^*}[x \not \in S] + \frac{p - 2pq + q^2}{p - pq} \right)^N \right\}
\end{align*}
\end{lemma}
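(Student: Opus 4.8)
The plan is to analyze $\textsc{mrk}(B)$ as a Markov chain on the state space $X^\ell$ that, at each of its $N$ iterations, proposes a fresh sample $z' \sim \mQ^*_n$, rejects it unless $z' \in S$, and then accepts it (setting $z \gets z'$ and terminating) with an acceptance probability that depends on the input bit $B$. The key observation is that the two acceptance probabilities in steps (3) and (4) are designed so that, \emph{conditioned on a proposal being accepted}, the resulting distribution of $z$ is exactly $\mP^*_n$ restricted to $S$ when $B=1$ and exactly $\mQ^*_n$ restricted to $S$ when $B=0$. Indeed, writing $r(z') = \frac{q}{p}\exp(L_n(z'))$ for the Radon--Nikodym-type ratio $\frac{d\mQ^*_n}{d\mP^*_n}(z')$ up to the constant $q/p$, the $B=0$ acceptance probability $1 - r(z')$ is proportional (as a function of $z'$) to $d\mP^*_n - \frac{q}{p} d\mQ^*_n$ on $S$, and the $B=1$ acceptance probability $r(z') - \frac{q(1-p)}{p(1-q)}$ is proportional to $\frac{q}{p} d\mQ^*_n - \frac{q(1-p)}{p(1-q)} d\mP^*_n$ on $S$; but on $S$ the likelihood ratio bounds $\frac{1-p}{1-q} \le e^{L_n} \le \frac{p}{q}$ guarantee both of these are in $[0,1]$ and, more importantly, that their \emph{normalized} versions collapse exactly to $\mQ^*_n(\cdot \mid S)$ and $\mP^*_n(\cdot \mid S)$ respectively, since $d\mP^*_n - \frac{q}{p}d\mQ^*_n$ is a positive scalar multiple of $d\mQ^*_n$... — no, more carefully: the point is that $\mP^*_n$ vs. $\mQ^*_n$ on $S$ are related by a bounded likelihood ratio, so a single accept/reject step with the stated probabilities produces \emph{exactly} the target conditional law. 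I would verify this identity by a direct density computation; this is the conceptual heart but is a routine calculation once set up.

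Given that exact-on-$S$ property, the total variation error has two sources, which I would bound separately and then combine via the triangle inequality (Fact \ref{tvfacts}) and Lemma \ref{lem:tvacc}. First, the \emph{truncation error}: the algorithm outputs a sample from $\mQ^*_n(\cdot \mid S)$ or $\mP^*_n(\cdot \mid S)$ rather than from $\mQ^*_n$ or $\mP^*_n$, and by the ``conditioning on an event'' property in Fact \ref{tvfacts}, $\TV(\mQ^*_n(\cdot \mid S), \mQ^*_n) = \bP_{x\sim\mQ^*_n}[x \notin S]$ and likewise for $\mP^*_n$. But there is a subtlety: the probability that a \emph{single} iteration both passes the $S$-filter and is accepted is not $\bP[S]$ but rather $\bP[x \in S] \cdot (\text{average acceptance})$, and I need the denominator $p - q$ to appear. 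Here the calculation is that, conditioned on $B \sim \text{Bern}(p)$ (i.e. the overall input being the planted case), the per-iteration probability of termination works out to something like $(p-q)/p$ times a factor, and similarly in the null case; tracking these constants is where the $\frac{1}{p-q}$ prefactor and the stated $\Delta$ expression come from. Second, the \emph{non-termination error}: with some probability the chain runs out of all $N$ iterations without ever accepting, in which case it outputs the arbitrary initialization $z$, which is wrong. This probability is $(\text{per-iteration failure probability})^N$, and the two terms inside the $\max$ in $\Delta$ are exactly the per-iteration failure probabilities in the $B=0$ and $B=1$ cases: $\bP_{x\sim\mQ^*_n}[x\notin S] + \frac{q}{p}$ (the proposal lands outside $S$, or lands in $S$ but the $B=0$ coin fails with probability $r(z')$, which averages to $q/p$), and $\frac{q}{p}\bP_{x\sim\mP^*_n}[x\notin S] + \frac{p - 2pq + q^2}{p-pq}$ for the $B=1$ case (obtained by a parallel bookkeeping of $1 - (r(z') - \frac{q(1-p)}{p(1-q)})$ averaged against the proposal law, whichever is relevant).

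Concretely, the steps I would carry out in order are: (i) fix $B$ and describe one iteration as a two-stage filter-then-accept operation, computing the probability of termination at that iteration and the law of the output \emph{given} termination; (ii) prove the exact identity that this conditional output law is $\mP^*_n(\cdot\mid S)$ when $B=1$ and $\mQ^*_n(\cdot\mid S)$ when $B=0$, using the definition of $S$ to check the acceptance probabilities lie in $[0,1]$ and that normalization kills the extra terms; (iii) compute the per-iteration non-termination probabilities in both cases, identifying them with the two bases appearing in the $\max$; (iv) condition on the event $E$ that the chain terminates within $N$ iterations — on $E$ the output is exactly the truncated target, off $E$ it is arbitrary — and apply $\TV(\mL(\text{output}), \text{target}) \le \bP[E^c] + \TV(\text{target}(\cdot\mid S), \text{target})$, where the first term is the non-termination bound and the second is the truncation bound; (v) bound the truncation term, absorbing the $\frac{1}{p-q}$ factor correctly by carefully normalizing the per-iteration termination probability (this is the step most prone to off-by-constant errors); and (vi) take the worse of the two cases to get a single $\Delta$, and note the runtime is $\text{poly}(n,\ell,N)$ since each iteration requires $\ell$ calls to the $\mQ^i_n$-sampling oracle, $\ell$ calls to the Radon--Nikodym oracle, and sampling one biased coin. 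The main obstacle, as flagged, is step (v): getting the constants exactly right so that the truncation contribution is $\frac{\bP_{\mQ^*_n}[S^c] + \bP_{\mP^*_n}[S^c]}{p-q}$ rather than something off by a factor, which requires being precise about the difference between ``probability of termination at a given iteration'' and ``probability a proposal is accepted given it passed the $S$-filter,'' and about how these interact when we have \emph{not} yet conditioned on which of $\text{Bern}(p)$ or $\text{Bern}(q)$ generated $B$ — so I would handle the $B \sim \text{Bern}(p)$ and $B \sim \text{Bern}(q)$ cases fully separately from the start rather than trying to unify them prematurely.
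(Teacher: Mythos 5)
There is a genuine gap, and it is the conceptual heart of your argument rather than a constant-tracking issue. Your ``key observation'' --- that conditioned on acceptance the output law is exactly $\mQ^*_n(\cdot \mid S)$ when $B=0$ and exactly $\mP^*_n(\cdot \mid S)$ when $B=1$ --- is false. The proposal is drawn from $\mQ^*_n$ and accepted with probability $1 - \frac{q}{p}\exp(L_n(z'))$ (for $B=0$) or $\frac{q}{p}\exp(L_n(z')) - \frac{q(1-p)}{p(1-q)}$ (for $B=1$), so the conditional law given acceptance has density with respect to $\mQ^*_n$ proportional to $\bigl(p - q\tfrac{d\mP^*_n}{d\mQ^*_n}(x)\bigr)\mathbf{1}_S(x)$ in the first case and $\bigl((1-q)\tfrac{d\mP^*_n}{d\mQ^*_n}(x) - (1-p)\bigr)\mathbf{1}_S(x)$ in the second. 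Neither is a truncation of $\mQ^*_n$ or $\mP^*_n$: the bounded-likelihood-ratio condition defining $S$ only guarantees these acceptance probabilities lie in $[0,1]$; it does not make the affine shift in the likelihood ratio disappear under normalization. Indeed, if your claim were true the lemma itself would fail: on input $B \sim \textnormal{Bern}(p)$ the output would then be (up to truncation and non-termination errors) the mixture $p \cdot \mP^*_n(\cdot\mid S) + (1-p)\cdot \mQ^*_n(\cdot \mid S)$, which is at total variation distance roughly $(1-p)\cdot\TV(\mP^*_n, \mQ^*_n)$ from $\mP^*_n$ --- not small in the regimes of interest. So step (ii) of your plan would fail at the density computation, and the error decomposition in step (iv), which is built on it, collapses with it.

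What actually makes the construction work, and what is missing from your proposal, is that the two conditional-on-acceptance laws are deliberately \emph{not} the truncated targets but are chosen so that their Bernoulli mixtures reproduce the targets. Concretely, the paper's proof hinges on the exact identities
\begin{align*}
\frac{d \mP_n^*}{d\mQ_n^*}(x) &= p \cdot \frac{(1 - q) \cdot \frac{d \mP_n^*}{d\mQ_n^*}(x) - (1 - p)}{p - q} + (1 - p) \cdot \frac{p - q \cdot \frac{d \mP_n^*}{d\mQ_n^*}(x)}{p - q}, \\
1 &= q \cdot \frac{(1 - q) \cdot \frac{d \mP_n^*}{d\mQ_n^*}(x) - (1 - p)}{p - q} + (1 - q) \cdot \frac{p - q \cdot \frac{d \mP_n^*}{d\mQ_n^*}(x)}{p - q},
\end{align*}
so that the $\textnormal{Bern}(p)$-mixture (resp.\ $\textnormal{Bern}(q)$-mixture) of the two conditional laws equals $\mP^*_n$ (resp.\ $\mQ^*_n$) up to errors coming from (a) replacing the true normalizers $p\,\bP_{\mQ^*_n}[S] - q\,\bP_{\mP^*_n}[S]$ and $(1-q)\,\bP_{\mP^*_n}[S] - (1-p)\,\bP_{\mQ^*_n}[S]$ by $p-q$ and from restricting to $S$ --- this is the source of the $\frac{\bP_{\mQ^*_n}[x \notin S] + \bP_{\mP^*_n}[x\notin S]}{p-q}$ term --- and (b) non-termination after $N$ iterations, which gives the $\max$ term (your bookkeeping of the per-iteration acceptance probabilities for that part is essentially right). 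In short, you cannot analyze the two values of $B$ ``fully separately from the start'' against their respective targets, because for a fixed value of $B$ the output is \emph{not} close to either target; the total variation guarantees of the lemma are inherently statements about the mixtures, and any correct proof must pass through the mixture identities above (or an equivalent reformulation).
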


\begin{proof}
Let $\textsc{mrk}$ be the $\text{poly}(n, \ell, N)$ time algorithm shown in Figure \ref{fig:rej-kernel}. For the sake of analysis, consider continuing to iterate Step 2 even after $z$ is set for the first time for a total of $N$ iterations. Let $A_i^0$ and $A_i^1$ be the events that $z$ is set in the $i$th iteration of Step 2 when $B = 0$ and $B = 1$, respectively. For $B = 0$, let $B_i^0 = (A_1^0)^C \cap (A_2^0)^C \cap \cdots \cap (A^0_{i - 1})^C \cap A_i^0$ be the event that $z$ is set for the first time in the $i$th iteration of Step 2. Let $C^0 = A_1^0 \cup A_2^0 \cup \cdots \cup A_N^0$ be the event that $z$ is set in some iteration of Step 2. Define $B_i^1$ and $C^1$ analogously. Let $z_0$ be the initialization of $z$ in Step 1.

Now let $Z_0 \sim \mD_0 = \mL(\textsc{mrk}(0))$ and $Z_1 \sim \mD_1 = \mL(\textsc{mrk}(1))$. We have that $\mL(Z_0|B_i^0) = \mL(Z_0|A_i^0)$ and $\mL(Z_1|B_i^1) = \mL(Z_1|A_i^1)$ since $A_i^t$ is independent of $A_1^t, A_2^t, \dots, A_{i-1}^t$ for each $t \in \{0, 1\}$ and the sample $z'$ chosen in the $i$th iteration of Step 2. Observe that independence between Steps 2.1, 2.3 and 2.4 ensures that
\begin{align*}
\bP\left[A_i^0\right] &= \bE_{x \sim \mQ^*_n}\left[ \left( 1 - \frac{q}{p} \cdot \exp(L_n(x)) \right) \cdot \mathbf{1}_{S}(x) \right] \\
&= \bP_{x \sim \mQ^*_n}[x \in S] - \frac{q}{p} \cdot \bP_{x \sim \mP^*_n}[x \in S] \\
\bP\left[A_i^1\right] &= \bE_{x \sim \mQ^*_n}\left[ \left( \frac{q}{p} \cdot \exp(L_n(x)) - \frac{q(1 - p)}{p(1 - q)} \right) \cdot \mathbf{1}_{S}(x) \right] \\
&= \frac{q}{p} \cdot \bP_{x \sim \mP^*_n}[x \in S] - \frac{q(1 - p)}{p(1 - q)} \cdot \bP_{x \sim \mQ^*_n}[x \in S] 
\end{align*}
since $\frac{d\mP^*_n}{d\mQ^*_n}(x) = \exp\left( L_n(x)\right)$. By the independence of the $A_i^0$, we have that
\begin{align*}
1 - \bP\left[ C^0 \right] &= \prod_{i = 1}^N \left( 1 - \bP\left[A_i^0\right] \right) = \left( 1 - \bP_{x \sim \mQ^*_n}[x \in S] + \frac{q}{p} \cdot \bP_{x \sim \mP^*_n}[x \in S] \right)^N \\
&\le \left( \bP_{x \sim \mQ^*_n}[x \not \in S] + \frac{q}{p} \right)^N
\end{align*}
Similarly, we have that
\begin{align*}
1 - \bP\left[ C^1 \right] &= \prod_{i = 1}^N \left( 1 - \bP\left[A_i^1\right] \right) = \left( 1 - \frac{q}{p} \cdot \bP_{x \sim \mP^*_n}[x \in S] + \frac{q(1 - p)}{p(1 - q)} \cdot \bP_{x \sim \mQ^*_n}[x \in S]  \right)^N \\
&\le \left( \frac{q}{p} \cdot \left( 1 - \bP_{x \sim \mP^*_n}[x \in S] \right) + 1 - \frac{q}{p} + \frac{q(1 - p)}{p(1 - q)} \right)^N \\
&\le \left( \frac{q}{p} \cdot \bP_{x \sim \mP^*_n}[x \not \in S] + \frac{p - 2pq + q^2}{p - pq} \right)^N
\end{align*}
We now have that $\mL(Z_0|A_i^0)$ and $\mL(Z_1|A_i^1)$ are each absolutely continuous with respect to $\mQ^*_n$ with Radon-Nikodym derivatives
\begin{align*}
\frac{d\mL(Z_0|B_i^0)}{d\mQ^*_n} (x) = \frac{d\mL(Z_0|A_i^0)}{d\mQ^*_n} (x) &= \bP\left[A_i^0\right]^{-1} \cdot \left( 1 - \frac{q}{p} \cdot \exp(L_n(x)) \right) \cdot \mathbf{1}_S(x) \\
&= \frac{p - q \cdot \frac{d \mP_n^*}{d\mQ_n^*}(x)}{p \cdot \bP_{x \sim \mQ^*_n}[x \in S] - q \cdot \bP_{x \sim \mP^*_n}[x \in S]} \cdot \mathbf{1}_S(x) \\
\frac{d\mL(Z_1|B_i^1)}{d\mQ^*_n} (x) = \frac{d\mL(Z_1|A_i^1)}{d\mQ^*_n} (x) &= \bP\left[A_i^1\right]^{-1} \cdot \left( \frac{q}{p} \cdot \exp(L_n(z')) - \frac{q(1 - p)}{p(1 - q)} \right) \cdot \mathbf{1}_S(x) \\
&= \frac{(1 - q) \cdot \frac{d \mP_n^*}{d\mQ_n^*}(x) - (1 - p)}{(1 - q) \cdot \bP_{x \sim \mP^*_n}[x \in S] - (1 - p) \cdot \bP_{x \sim \mQ^*_n}[x \in S]} \cdot \mathbf{1}_S(x)
\end{align*}
Now observe that since the conditional laws $\mL(Z_0|B_i^0)$ are all identical, we have that
$$\frac{d\mD_0}{d\mQ^*_n} (x) = \bP\left[C^0 \right] \cdot \frac{d\mL(Z_0|B_1^0)}{d\mQ^*_n} (x) + \left( 1 - \bP\left[C^0 \right] \right) \cdot \mathbf{1}_{z_0}(x)$$
Therefore it follows that
\begin{align*}
\TV\left( \mD_0, \mL(Z_0|B_1^0) \right) &= \frac{1}{2} \cdot \bE_{x \sim \mQ^*_n} \left[\left| \frac{d\mD_0}{d\mQ^*_n} (x) - \frac{d\mL(Z_0|B_1^0)}{d\mQ^*_n} (x) \right| \right] \\
&= \frac{1}{2} \left( 1 - \mP\left[ C^0 \right] \right) \cdot \bE_{x \sim \mQ^*_n} \left[ \left| \mathbf{1}_{z_0}(x) - \frac{d\mL(Z_0|B_1^0)}{d\mQ^*_n} (x) \right| \right] \\
&\le \frac{1}{2} \left( 1 - \mP\left[ C^0 \right] \right) \cdot \bE_{x \sim \mQ^*_n} \left[ \mathbf{1}_{z_0}(x) + \frac{d\mL(Z_0|B_1^0)}{d\mQ^*_n} (x) \right] \\
&= 1 - \mP\left[ C^0 \right]
\end{align*}
by the triangle inequality. A symmetric argument implies that $\TV\left( \mD_1, \mL(Z_1|B_1^1) \right) \le 1 - \mP\left[ C^1 \right]$. Now observe that since $\frac{d \mP_n^*}{d\mQ_n^*}(x) \le \frac{p}{q}$ for $x \in S$, we have that
\begin{align*}
&\bE_{x \sim \mQ^*_n} \left[\left| \frac{d\mL(Z_0|B_1^0)}{d\mQ^*_n} (x) - \frac{p - q \cdot \frac{d \mP_n^*}{d\mQ_n^*}(x)}{p - q} \right| \right] \\ 
&\quad \quad = \left|\frac{1}{p \cdot \bP_{x \sim \mQ^*_n}[x \in S] - q \cdot \bP_{x \sim \mP^*_n}[x \in S]} - \frac{1}{p - q} \right| \cdot \bE_{x \sim \mQ^*_n} \left[\left( p - q \cdot \frac{d \mP_n^*}{d\mQ_n^*}(x) \right) \cdot \mathbf{1}_S(x) \right] \\
&\quad \quad \quad \quad + \bE_{x \sim \mQ^*_n} \left[ \left| \frac{p - q \cdot \frac{d \mP_n^*}{d\mQ_n^*}(x)}{p - q} \right| \cdot \mathbf{1}_{S^C}(x) \right] \\
&\quad \quad \le \left| 1 - \frac{p \cdot \bP_{x \sim \mQ^*_n}[x \in S] - q \cdot \bP_{x \sim \mP^*_n}[x \in S]}{p - q} \right| + \bE_{x \sim \mQ^*_n} \left[ \frac{p + q \cdot \frac{d \mP_n^*}{d\mQ_n^*}(x)}{p - q} \cdot \mathbf{1}_{S^C}(x) \right] \\
&\quad \quad = \left| \frac{p \cdot \bP_{x \sim \mQ^*_n}[x \not \in S] - q \cdot \bP_{x \sim \mP^*_n}[x \not \in S]}{p - q} \right| + \frac{p \cdot \bP_{x \sim \mQ^*_n}[x \not \in S] + q \cdot \bP_{x \sim \mP^*_n}[x \not \in S]}{p - q} \\
&\quad \quad \le \frac{2 \cdot \bP_{x \sim \mQ^*_n}[x \not \in S] + 2 \cdot \bP_{x \sim \mP^*_n}[x \not \in S]}{p - q}
\end{align*}
By a symmetric computation, we have that
$$\bE_{x \sim \mQ^*_n} \left[\left| \frac{d\mL(Z_1|B_1^1)}{d\mQ^*_n} (x) - \frac{(1 - q) \cdot \frac{d \mP_n^*}{d\mQ_n^*}(x) - (1 - p)}{p - q} \right| \right] \le \frac{2 \cdot \bP_{x \sim \mQ^*_n}[x \not \in S] + 2 \cdot \bP_{x \sim \mP^*_n}[x \not \in S]}{p - q}$$
Now observe that
\begin{align*}
\frac{d \mP_n^*}{d\mQ_n^*}(x) &= p \cdot \frac{(1 - q) \cdot \frac{d \mP_n^*}{d\mQ_n^*}(x) - (1 - p)}{p - q} + (1 - p) \cdot \frac{p - q \cdot \frac{d \mP_n^*}{d\mQ_n^*}(x)}{p - q} \\
1 &= q \cdot \frac{(1 - q) \cdot \frac{d \mP_n^*}{d\mQ_n^*}(x) - (1 - p)}{p - q} + (1 - q) \cdot \frac{p - q \cdot \frac{d \mP_n^*}{d\mQ_n^*}(x)}{p - q}
\end{align*}
Now observe that
\begin{align*}
\TV\left( \textsc{mrk}(\text{Bern}(p)), \mP^*_n \right) &= \TV\left( p \cdot \mL(Z_1 | B_1^1) + (1 - p) \cdot \mL(Z_0 | B_1^0), \mP^*_n \right) \\
&\quad \quad +  \TV\left( p \cdot \mL(Z_1 | B_1^1) + (1 - p) \cdot \mL(Z_0 | B_1^0), \textsc{mrk}(\text{Bern}(p)) \right) \\
&\le \frac{p}{2} \cdot \bE_{x \sim \mQ^*_n} \left[\left| \frac{d\mL(Z_1|B_1^1)}{d\mQ^*_n} (x) - \frac{p - q \cdot \frac{d \mP_n^*}{d\mQ_n^*}(x)}{p - q} \right| \right] \\
&\quad \quad + \frac{(1 - p)}{2} \cdot \bE_{x \sim \mQ^*_n} \left[\left| \frac{d\mL(Z_0|B_1^0)}{d\mQ^*_n} (x) - \frac{p - q \cdot \frac{d \mP_n^*}{d\mQ_n^*}(x)}{p - q} \right| \right] \\
&\quad \quad + p \cdot \TV\left( \mD_1, \mL(Z_1|B_1^1) \right) + (1 - p) \cdot \TV\left( \mD_0, \mL(Z_0|B_1^0) \right) \\
&\le \Delta
\end{align*}
A symmetric argument shows $\TV\left( \textsc{mrk}(\text{Bern}(q)), \mQ^*_n \right) \le \Delta$, completing the proof of the lemma.
\end{proof}

\subsection{Homogeneous MRK and Log-Likelihood Ratio LDPs}

We now show that when all of the pairs $(\mP^i, \mQ^i) = (\mP, \mQ)$ are the same and their LLR satisfies LDPs with respect to each of $\mQ$ and $\mP$, then the total variation error $\Delta$ is small in the lemma above. This will be the form of the lemma that we will apply to show computational lower bounds for $\pr{ssd}$.

\begin{lemma} \label{lem:mrkupperbound}
Let $(\mP, \mQ)$ be a computable pair over $(X, \mathcal{B})$. Define $n, \ell, p$ and $q$ as in Lemma \ref{lem:mrk}. Let $\tau_+, \tau_- \ge \ell^{-1} \cdot \log \left( 4(p - q)^{-1} \right)$ and suppose that $(\mP, \mQ)$ satisfies
$$\log\left( \frac{1 - q}{1 - p} \right) < -\ell \cdot \KL(\mQ \| \mP) \le \ell \cdot \KL(\mP \| \mQ) < \log \left( \frac{p}{q} \right)$$
and the LLR between $(\mP, \mQ)$ satisfies the LDPs
$$ E_\mP\left( \ell^{-1} \cdot \log \left( \frac{p}{q} \right) \right) \ge \tau_+ \quad \textnormal{and} \quad E_\mQ\left( \ell^{-1} \cdot \log \left( \frac{1 - p}{1 - q} \right) \right) \ge \tau_-$$
where the second inequality is only necessary if $p \neq 1$. Then there is a map $\textsc{mrk} : \{0, 1\} \to X^\ell$ that can be computed in $\textnormal{poly}(n, \ell, pq^{-1}(p - q)^{-1}, \min(\tau_+, \tau_-))$ time satisfying that
\begin{align*}
\TV\left( \textsc{mrk}(\textnormal{Bern}(p)), \mP_n^{\otimes \ell} \right) &\le \frac{3(e^{-\ell \tau_+} + e^{-\ell \tau_-})}{p - q} \\
\TV\left( \textsc{mrk}(\textnormal{Bern}(q)), \mQ_n^{\otimes \ell} \right) &\le \frac{3(e^{-\ell \tau_+} + e^{-\ell \tau_-})}{p - q}
\end{align*}
\end{lemma}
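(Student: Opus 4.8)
The plan is to obtain Lemma \ref{lem:mrkupperbound} as a direct corollary of Lemma \ref{lem:mrk}: in the homogeneous case $(\mP^i,\mQ^i)=(\mP,\mQ)$ for all $i$ we have $\mP^*_n=\mP_n^{\otimes\ell}$ and $\mQ^*_n=\mQ_n^{\otimes\ell}$, and $L_n(x)=\sum_{i=1}^{\ell}L(x_i)$ with $L=\log\frac{d\mP_n}{d\mQ_n}$ is a sum of $\ell$ i.i.d.\ copies of the log-likelihood ratio, distributed as $L$ under $\mP_n$ when $x\sim\mP_n^{\otimes\ell}$ and as $L$ under $\mQ_n$ when $x\sim\mQ_n^{\otimes\ell}$. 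Since $x\notin S$ exactly when $L_n(x)>\log(p/q)$ or $L_n(x)<\log\frac{1-p}{1-q}$, the whole argument reduces to estimating four tails of $L_n$, substituting the estimates into the formula for $\Delta$ from Lemma \ref{lem:mrk}, and then choosing $N$.

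I would bound the two ``near'' tails by Chernoff bounds as set up in Section \ref{sec:defns}, invoking the two LDP hypotheses. Because $\ell\,\KL(\mP_n\|\mQ_n)<\log(p/q)$, the threshold $\log(p/q)$ exceeds $\ell$ times the mean of $L$ under $\mP_n$, so $\bP_{x\sim\mP_n^{\otimes\ell}}[L_n(x)\ge\log(p/q)]\le\exp\!\big(-\ell E_\mP(\ell^{-1}\log(p/q))\big)\le e^{-\ell\tau_+}$; symmetrically $\log\frac{1-p}{1-q}$ lies below $\ell$ times the mean of $L$ under $\mQ_n$, so $\bP_{x\sim\mQ_n^{\otimes\ell}}[L_n(x)<\log\frac{1-p}{1-q}]\le\exp\!\big(-\ell E_\mQ(\ell^{-1}\log\frac{1-p}{1-q})\big)\le e^{-\ell\tau_-}$ (this tail is empty, and that hypothesis unnecessary, when $p=1$). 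The two ``far'' tails I would get for free by a one-line change of measure rather than another large deviation estimate: since $\frac{d\mQ_n^{\otimes\ell}}{d\mP_n^{\otimes\ell}}=e^{-L_n}\le q/p$ on $\{L_n\ge\log(p/q)\}$, we get $\bP_{x\sim\mQ_n^{\otimes\ell}}[L_n(x)\ge\log(p/q)]\le\frac{q}{p}\,\bP_{x\sim\mP_n^{\otimes\ell}}[L_n(x)\ge\log(p/q)]\le e^{-\ell\tau_+}$, and symmetrically $\bP_{x\sim\mP_n^{\otimes\ell}}[L_n(x)<\log\frac{1-p}{1-q}]\le\frac{1-p}{1-q}\,\bP_{x\sim\mQ_n^{\otimes\ell}}[L_n(x)<\log\frac{1-p}{1-q}]\le e^{-\ell\tau_-}$. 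Writing $\eta:=e^{-\ell\tau_+}+e^{-\ell\tau_-}$, this gives $\bP_{\mP_n^{\otimes\ell}}[x\notin S]\le\eta$ and $\bP_{\mQ_n^{\otimes\ell}}[x\notin S]\le\eta$, so the first summand of $\Delta$ is at most $2\eta/(p-q)$.

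For the second summand of $\Delta$ I would use the standing assumption $\tau_+,\tau_-\ge\ell^{-1}\log(4(p-q)^{-1})$, which forces $e^{-\ell\tau_+},e^{-\ell\tau_-}\le(p-q)/4$ and hence $\eta\le(p-q)/2$. Substituting the bounds $\bP_{\mQ_n^{\otimes\ell}}[x\notin S]\le\eta$ and $\bP_{\mP_n^{\otimes\ell}}[x\notin S]\le\eta$ into the two bases of the $N$-th powers in $\Delta$, and using $p\le1$ together with the identity $\frac{p-2pq+q^2}{p-pq}=1-\frac{q(p-q)}{p(1-q)}$, one checks that both bases are at most $1-c$ where $c:=\frac{q(p-q)}{2p}>0$, so this summand is at most $(1-c)^N\le e^{-cN}$. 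Choosing $N=\lceil c^{-1}\ell\min(\tau_+,\tau_-)\rceil$ (note $c^{-1}=2\,pq^{-1}(p-q)^{-1}$), which is $\mathrm{poly}(n,\ell,pq^{-1}(p-q)^{-1},\min(\tau_+,\tau_-))$ and therefore keeps $\textsc{mrk}$ within the claimed running time through the $\mathrm{poly}(n,\ell,N)$ guarantee of Lemma \ref{lem:mrk}, gives $e^{-cN}\le e^{-\ell\min(\tau_+,\tau_-)}\le\eta\le\eta/(p-q)$. Hence $\Delta\le 2\eta/(p-q)+\eta/(p-q)=3\eta/(p-q)$, which is exactly the claimed bound, and Lemma \ref{lem:mrkupperbound} follows.

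I expect the argument to be conceptually short, with the main care points being (i) applying each Chernoff estimate on the correct side of the mean, which is precisely what the chained inequality in the hypothesis of Lemma \ref{lem:mrkupperbound} guarantees, and (ii) verifying that both bases inside the $N$-th powers are uniformly separated from $1$ by a quantity polynomial in $(p-q)$ and $q/p$, so that a polynomially large $N$ drives the ``$z$ is never set'' term below the ``$z$ is set to the noise distribution'' term. The change-of-measure step for the two far tails is the one place a naive argument would get stuck: it avoids needing LDPs for $E_\mP$ at $\ell^{-1}\log\frac{1-p}{1-q}$ and for $E_\mQ$ at $\ell^{-1}\log(p/q)$ in addition to the two hypotheses already imposed, which is what keeps the assumptions on $(\mP,\mQ)$ minimal.
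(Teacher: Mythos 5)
Your proposal is correct and follows essentially the same route as the paper's proof: apply Lemma \ref{lem:mrk} in the homogeneous case, bound $\bP[x\notin S]$ under both $\mP_n^{\otimes \ell}$ and $\mQ_n^{\otimes \ell}$ by $e^{-\ell\tau_+}+e^{-\ell\tau_-}$ via Chernoff bounds on the correct sides of the means, verify both bases in the max term are at most $1-\frac{q(p-q)}{2p}$ using $\tau_\pm \ge \ell^{-1}\log(4(p-q)^{-1})$, and take $N \asymp \frac{p}{q(p-q)}\,\ell\min(\tau_+,\tau_-)$. Your change-of-measure treatment of the two far tails is just a restatement of the identity $E_\mP(\tau)=E_\mQ(\tau)+\tau$ that the paper uses for the same purpose, so the arguments coincide.
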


\begin{proof}
Let $c_+ = \log \left( \frac{p}{q} \right) > 0$ and $c_- = \log \left( \frac{1 - p}{1 - q} \right) < 0$. Observe that
\begin{align*}
E_\mQ\left( c_+ \ell^{-1} \right) &= E_\mP\left( c_+ \ell^{-1} \right) + c_+ \ell^{-1} > \tau_+ \\
E_\mP\left( c_- \ell^{-1} \right) &= E_\mQ\left( c_- \ell^{-1} \right) - c_- \ell^{-1} > \tau_-
\end{align*}
where the second set of inequalities holds if $p \neq 1$. Now consider applying Lemma \ref{lem:mrk} with number of iterations
$$N = \left\lceil \frac{\ell \cdot \min(\tau_+, \tau_-)}{\log \left( 1 - \frac{q(p - q)}{2p} \right)^{-1}}\right\rceil = O\left( \frac{p}{q(p - q)} \cdot \ell \cdot \min(\tau_+, \tau_-) \right)$$
since $\log(1 - x)^{-1} \ge x$ for $x = \frac{q(p - q)}{2p}$. Thus it suffices to bound $\Delta$ given that the LLR between $(\mP, \mQ)$ satisfies the LDPs above. For now consider the case where $p \neq 1$. Let $L_n(x) = \sum_{i = 1}^\ell \log \frac{d\mP_n}{d \mQ_n} (x_i)$ for each $x \in X^\ell$. Now consider $\lambda_1$ with objective value approaching the supremum in the optimization $E_\mP\left(c_+ \ell^{-1} \right) = \sup_{\lambda \in \mathbb{R}} c_+ \ell^{-1} \lambda - \psi_\mP(\lambda)$. Since the derivative of the objective at $\lambda = 0$ is $c_+ \ell^{-1} - \KL(\mP \| \mQ) > 0$, we can take $\lambda_1 \ge 0$. Analogously, we can take $\lambda_2 \le 0$ approaching the supremum in the optimization $E_\mP(c_- \ell^{-1})$. Applying a Chernoff bound now yields that
\begin{align*}
\bP_{x \sim \mP_n^{\otimes \ell}} \left[ x \not \in  S \right] &= \bP_{x \sim \mP_n^{\otimes \ell}}\left[ \exp\left( \lambda_1 \cdot L_n(x) \right) > \left( \frac{p}{q} \right)^{\lambda_1} \right] + \bP_{x \sim \mP_n^{\otimes \ell}}\left[ \exp\left( \lambda_2 \cdot L_n(x) \right) > \left( \frac{1 - p}{1 - q} \right)^{\lambda_2} \right] \\
&\le \left( \frac{p}{q} \right)^{-\lambda_1} \cdot \bE_{X \sim \mP_n^{\otimes \ell}}\left[ \exp\left( \lambda_1 \cdot L_n(x) \right) \right] + \left( \frac{1 - q}{1 - p} \right)^{-\lambda_2} \cdot \bE_{X \sim \mP_n^{\otimes \ell}}\left[ \exp\left( -\lambda_2 \cdot L_n(x) \right) \right] \\
&= \exp\left( - c_+ \lambda_1 + \ell \cdot \psi_\mP(\lambda_1) \right) + \exp\left( - c_- \lambda_2 + \ell \cdot \psi_\mP(\lambda_2) \right)  \\
&\to \exp\left(-\ell \cdot E_\mP(c_+ \ell^{-1}) \right) + \exp\left(-\ell \cdot E_\mP(c_- \ell^{-1}) \right) \\
&\le e^{-\ell \tau_+} + e^{-\ell \tau_-}
\end{align*}
where the limit holds on taking $\lambda_1$ and $\lambda_2$ to approach the two suprema. Note that if $p = 1$, then the second probability above trivially satisfies that
$$\bP_{x \sim \mP_n^{\otimes \ell}}\left[ L_n(x) < \frac{1 - p}{1 - q} \right] = 0 < e^{-\ell \tau_-}$$
and the bound on $\bP_{x \sim \mP_n^{\otimes \ell}} \left[ x \not \in  S \right]$ above still holds. By a symmetric argument, we also have that
$$\bP_{x \sim \mQ_n^{\otimes \ell}} \left[ x \not \in S \right] \le e^{-\ell \tau_+} + e^{-\ell \tau_-}$$
Now observe that since $e^{-\ell \tau_+}, e^{-\ell \tau_-} \le \frac{1}{4} (p - q)$, we have that
$$\frac{q}{p} \cdot \bP_{x \sim \mP_n^{\otimes \ell}} \left[ x \not \in S \right] + \frac{p - 2pq + q^2}{p - pq} \le \frac{q}{p} \cdot \left( \frac{1}{2} (p - q) \right) + 1 - \frac{q(p - q)}{p(1 - q)} \le 1 - \frac{q(p - q)}{2p(1 - q)} \le 1 - \frac{q(p - q)}{2p}$$
Similarly, we have
$$\bP_{x \sim \mQ_n^{\otimes \ell}} \left[ x \not \in S \right] + \frac{q}{p} \le \frac{1}{2} (p - q) + 1 - \frac{p - q}{p} \le 1 - \frac{q(p - q)}{2p}$$
Therefore it follows that
\begin{align*}
&\max \left\{ \left( \bP_{x \sim \mQ_n^*}[x \not \in S] + \frac{q}{p} \right)^N, \, \left( \frac{q}{p} \cdot \bP_{x \sim \mP_n^*}[x \not \in S] + \frac{p - 2pq + q^2}{p - pq} \right)^N \right\} \\
&\quad \quad \le \left(1 - \frac{q(p - q)}{2p} \right)^N \le \max\left( e^{-\ell \tau_+}, e^{-\ell \tau_-} \right)
\end{align*}
and hence that $\Delta \le \frac{3(e^{-\ell \tau_+} + e^{-\ell \tau_-})}{p - q}$, completing the proof of the lemma.
\end{proof}

\subsection{Entrywise Reductions Fail to Show Tight Computational Lower Bounds}
\label{sec:entrywise}

A conceptually simpler idea for an average-case reduction would avoid multivariate rejection kernels and instead begin by mapping to a submatrix problem with a specific pair $(\mP^*, \mQ^*)$ and then apply a univariate entry-wise map from $(\mP^*, \mQ^*)$ to $(\mP, \mQ)$. For example, if $(\mP^*, \mQ^*)$ were Bernoulli random variables, this would correspond to mapping to a submatrix variant of planted dense subgraph and then to $\pr{ssd}(n, k, \mP, \mQ)$. We remark that there is a simple barrier to making this approach produce tight computational lower bounds.

As discussed in the summary of our results in Section \ref{sec:summary}, the relevant signal in the computational phase diagram for $\pr{ssd}$ is $\KL(\mP \| \mQ)$. For this approach to lead to a reduction, there would have to be a univariate map $\phi$ such that $\phi(\mP^*) \sim \mP$ and $\phi(\mQ^*) \sim \mQ$ as long as $\KL(\mP^* \| \mQ^*) \asymp \KL(\mP \| \mQ)$. However, looking at the family of pairs of Bernoulli random variables shows that this is not the case. If $(\mP^*, \mQ^*) = (\text{Bern}(n^{-\alpha}), \text{Bern}(2n^{-\alpha}))$ then we have that
$$\KL(\mP^* \| \mQ^*) = -n^{-\alpha} \log 2 - (1 - n^{-\alpha}) \log\left( \frac{1 - 2n^{-\alpha}}{1 - n^{-\alpha}} \right) = \Theta(n^{-\alpha}) \quad \text{and} \quad \TV(\mQ^* \| \mP^*) = n^{-\alpha}$$
Furthermore, let $(\mP, \mQ) = (\text{Bern}(1/2), \text{Bern}(1/2 + n^{-\alpha/2}))$ and note that
$$\KL(\mP \| \mQ) = - \frac{1}{2} \log(1 + 2n^{-\alpha/2}) - \frac{1}{2}\log(1 - 2n^{-\alpha/2}) = \Theta(n^{-\alpha}) \quad \text{and} \quad \TV(\mQ \| \mP) = n^{-\alpha/2}$$
By the data-processing applied to $\TV$, it is impossible for such a map $\phi$ to exist even though these two pairs have KL divergences on the same order.

\section{Average-Case Reduction to Submatrix Detection}
\label{sec:avgredsub}

\begin{figure}[t!]
\begin{algbox}
\textbf{Algorithm} \textsc{To-Submatrix}

\vspace{1mm}

\textit{Inputs}: Graph $G \in \mG_n$, parameters $0 < q < p \le 1$, intermediate dimension $N$, expansion factor $\ell$, number of iterations $N_{\textnormal{it}}$, subgraph size $k$, pairs of computable sequences of distributions $(\mP_{ij}, \mQ_{ij})$ for $1 \le i, j \le N\ell$ over the measurable space $(X, \mathcal{B})$
\begin{enumerate}
\item Apply $\textsc{Graph-Clone}$ to $G$ with edge probabilities $P = p$ and $Q = 1 - \sqrt{(1 - p)(1 - q)} + \mathbf{1}_{\{p = 1\}} \left( \sqrt{q} - 1 \right)$ and $t = 2$ clones to obtain $(G_1, G_2)$
\item Sample $s_1 \sim \text{Bin}(n, p)$, $s_2 \sim \text{Bin}(N, Q)$ and a set $S \subseteq [N]$ with $|S| = n$ uniformly at random. Sample $T_1 \subseteq S$ and $T_2 \subseteq [N] \backslash S$ with $|T_1| = s_1$ and $|T_2| = \max\{ s_2 - s_1, 0 \}$ uniformly at random. Now form the matrix $M^1 \in \{0, 1\}^{N \times N}$ where
$$M^1_{ij} = \left\{ \begin{array}{ll} \mathbf{1}_{\{\pi(i), \pi(j)\} \in E(G_1)} & \text{if } i < j \text{ and } i, j \in S \\ \mathbf{1}_{\{\pi(i), \pi(j)\} \in E(G_2)} & \text{if } i > j \text{ and } i, j \in S \\ \mathbf{1}_{\{ i \in T_1 \}} & \text{if } i = j \text{ and } i, j \in S \\ \mathbf{1}_{\{i \in T_2\}} & \text{if } i = j \text{ and } i, j \not \in S \\ \sim_{\text{i.i.d.}} \text{Bern}(Q) & \text{if } i \neq j \text{ and } i \not \in S \text{ or } j \not \in S \end{array} \right.$$
where $\pi : S \to [n]$ is a bijection chosen uniformly at random
\item Let $\tau$ be a random permutation of $[N\ell]$ and form the matrix $M^2 \in X^{N\ell \times N\ell}$ by setting
$$\left(M^2_{ij} : s\ell + 1 \le \tau^{-1}(i) \le (s + 1)\ell \text{ and } t \ell + 1 \le \tau^{-1}(j) \le (t + 1)\ell\right) = \textsc{mrk}_{st}\left(M^1_{(s+1)(t+1)}\right)$$
for each $0 \le s, t < N$, where $\textsc{mrk}_{st}$ is the multivariate rejection kernel with $N_{\textnormal{it}}$ iterations sending $\textnormal{Bern}(p)$ and $\textnormal{Bern}(Q)$ to the random set of $\ell^2$ pairs $(\mP_{ij}^n, \mQ_{ij}^n)$ satisfying that $s\ell + 1 \le \tau^{-1}(i) \le (s + 1)\ell$ and $t \ell + 1 \le \tau^{-1}(j) \le (t + 1)\ell$
\item Output the matrix $M^2$
\end{enumerate}
\vspace{1mm}

\end{algbox}
\caption{Reduction $\textsc{To-Submatrix}$ for showing universal computational lower bounds for submatrix problems based on the hardness of planted clique or planted dense subgraph.}
\label{fig:tosubmatrix}
\end{figure}

In this section, we give a polynomial time reduction $\textsc{To-Submatrix}$ to $\pr{ssd}$ detection with pairs of planted and noise distributions that are computable and satisfy an LDP of their LLR. This reduction is shown in Figure \ref{fig:tosubmatrix} and its total variation guarantees are stated in Theorem \ref{thm:tosubmatrix} below. The reduction begins by cloning the adjacency matrix of a planted dense subgraph instance to produce two independent samples of the above diagonal portions of their adjacency matrix. These are then embedded as two halves of a principal minor in a larger matrix in Step 2. This random embedding hides the previously missing diagonal terms in total variation. Analyzing this step is one of our technical contributions. The resulting Bernoulli submatrix problem is then lifted using $\textsc{mrk}$ maps to an instance of submatrix detection.

Our reduction implies the computational lower bounds for Gaussian biclustering and planted dense subgraph in \cite{ma2015computational}, \cite{hajek2015computational} and \cite{brennan2018reducibility}. The step of embedding as a principal minor in Step 2 circumvents the arguments in \cite{hajek2015computational} and \cite{brennan2018reducibility} showing that the total variation error introduced by missing diagonal entries is small. It also simplifies the reductions $\textsc{PC-Lifting}$ and $\textsc{Distributional-Lifting}$ in \cite{brennan2018reducibility}. Our reduction extends the Gaussian biclustering lower bounds in \cite{ma2015computational} and \cite{brennan2018reducibility} to the cases of symmetric indices.

In Section \ref{subsec:graphcloning}, we describe and analyze the subroutine $\textsc{Graph-Clone}$ in Step 1 which transforms one instance of planted dense subgraph into many independent instances, preserving both $H_0$ and $H_1$, at a minor loss in the parameters $p$ and $q$. In Section \ref{subsec:plantingdiagonals}, we prove several lemmas to analyze Step 2 of $\textsc{To-Submatrix}$ and in Section \ref{subsec:proofofthm}, we complete the proof of Theorem \ref{thm:tosubmatrix}. In the next section, we apply Theorem \ref{thm:tosubmatrix} to deduce computational lower bounds for various forms of submatrix detection. Note that the reduction $\textsc{To-Submatrix}$ handles a more general setup where the pairs $(\mP_{ij}, \mQ_{ij})$ are permitted to differ from entry to entry. The additional submatrix problems this implies hardness for are discussed in Section \ref{sec:compbarriers}. $\textsc{To-Submatrix}$ also begins with an instance of planted dense subgraph, providing a more general class of reductions than just from planted clique.

Let $\mathcal{M}_{d}( (\mQ_{ij})_{1 \le i, j \le d})$ denote the distribution on $d \times d$ matrices $M$ with independent entries satisfying that $M_{ij} \sim \mQ_{ij}$ for each $1 \le i, j \le d$. Let $\mathcal{M}_{d}( (\mP_{ij}, \mQ_{ij})_{1 \le i, j \le d}, k)$ denote the distribution on $d \times d$ matrices $M$ generated as follows: choose a $k$-subset $S \subseteq [d]$ uniformly at random and then independently sample $M_{ij} \sim \mP_{ij}$ for each $i, j \in S$ and $M_{ij} \sim \mQ_{ij}$ otherwise.

\begin{theorem}[Reduction to Submatrix Detection] \label{thm:tosubmatrix}
Let $n$, $k \le n$, $N$, $N_{\textnormal{it}}$, $\ell$ and $0 < q < p \le 1$ be parameters such that $q = n^{-O(1)}$,
$$N \ge \left(\frac{p}{Q} + \epsilon \right)n, \quad k \le \frac{Q \epsilon n}{2} \quad \text{and} \quad \frac{k^2}{N} \le \min \left\{ \frac{Q}{1 - Q}, \frac{1 - Q}{Q} \right\}$$
where $\epsilon > 0$ and $Q = 1 - \sqrt{(1 - p)(1 - q)} + \mathbf{1}_{\{p = 1\}} \left( \sqrt{q} - 1 \right)$. Let $(\mP_{ij}, \mQ_{ij})$ for $1 \le i, j \le N\ell$ be pairs of computable sequences of distributions indexed by $n$ over the measurable space $(X, \mathcal{B})$. For each pair of $\ell$-subsets $U, V \subseteq [N\ell]$, let $\Delta_{U, V}$ be the total variation upper bound as defined in Lemma \ref{lem:mrk} for the multivariate rejection kernel $\textsc{mrk}_{U, V}$ with $N_{\textnormal{it}}$ iterations sending $\textnormal{Bern}(p)$ and $\textnormal{Bern}(Q)$ to the $\ell^2$ pairs $(\mP_{ij}^n, \mQ_{ij}^n)$ where $i \in U$ and $j \in V$. Let $\Delta = \max_{U, V} \Delta_{U, V}$ be the maximum such upper bound. The algorithm $\mathcal{A} = \textsc{To-Submatrix}$ in Figure \ref{fig:tosubmatrix} runs in $\textnormal{poly}(N, \ell)$ time and satisfies that
\begin{align*}
\TV\left( \mathcal{A}(\mG(n, k, p, q)), \, \mathcal{M}_{N\ell}( (\mP_{ij}, \mQ_{ij})_{1 \le i, j \le N\ell}, k\ell) \right) &\le N^2 \cdot \Delta + 4 \cdot \exp\left( - \frac{Q \epsilon^2 n^2}{32N} \right) \\
&\quad \quad + \sqrt{\frac{k^2(1 - Q)}{2QN}} + \sqrt{\frac{k^2Q}{2N(1 - Q)}} \\
\TV\left( \mathcal{A}(\mG(n, q)), \, \mathcal{M}_{N\ell}( (\mQ_{ij})_{1 \le i, j \le N\ell}) \right) &\le N^2 \cdot \Delta + 4 \cdot \exp\left( - \frac{Q \epsilon^2 n^2}{32N} \right)
\end{align*}
\end{theorem}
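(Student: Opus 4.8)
The plan is to analyze $\mathcal{A} = \textsc{To-Submatrix}$ step by step using Lemma \ref{lem:tvacc}, tracking an appropriate sequence of intermediate distributions $\mP_0, \mP_1, \mP_2, \mP_3$ under each of the two hypotheses. First I would handle Step 1: by the guarantees of $\textsc{Graph-Clone}$ (stated in Section \ref{subsec:graphcloning}, which I may assume), applying it with $t=2$ clones and the stated edge probabilities $P = p$, $Q = 1 - \sqrt{(1-p)(1-q)} + \mathbf{1}_{\{p=1\}}(\sqrt q - 1)$ maps $\mG(n,k,p,q)$ to a pair of independent copies of (the above-diagonal portion of) $\mG(n,k,p,Q)$ and maps $\mG(n,q)$ to two independent copies of $\mG(n,Q)$, exactly or up to some $o(1)$ total variation error that $\textsc{Graph-Clone}$ contributes. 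The choice of $Q$ is exactly what makes $(1-p)^{1/2}(1-p)^{1/2}$ type products work out so the cloning is faithful. This contributes terms to the final bound but I expect these to be absorbed into or match the stated error (the cloning subroutine's error is the source of one of the $\exp(-Q\epsilon^2 n^2/32N)$-type terms, or it is exact and the only nontrivial errors come from Steps 2 and 3).

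Next, Step 2 is the heart of the argument — the ``planting diagonals by embedding as a minor'' technique. Here I would show that the matrix $M^1 \in \{0,1\}^{N\times N}$ constructed in Step 2 is close in total variation to the following clean distribution: under $H_1$, a planted dense subgraph instance $\mathcal{M}_N((\text{Bern}(Q), \text{Bern}(p))\text{-pattern}, k)$ viewed with ALL entries (including the diagonal) independent with the correct Bernoulli marginals; under $H_0$, simply $\text{Bern}(Q)^{\otimes N\times N}$. The key observations are: (i) placing the $n\times n$ clique/graph into a uniformly random size-$n$ subset $S$ of $[N]$ and filling the complement with i.i.d. $\text{Bern}(Q)$ noise hides which coordinates were ``planted,'' (ii) the two independent clones $G_1, G_2$ are used to fill the strictly-above- and strictly-below-diagonal entries inside $S\times S$, so the matrix $M^1$ restricted to $S\times S$ has independent entries (no symmetry constraint coupling $M^1_{ij}$ with $M^1_{ji}$), and (iii) the diagonal entries inside $S$ are filled via the fresh Bernoulli draws $T_1$ (and outside $S$ via $T_2$) precisely so that the diagonal carries the correct $\text{Bern}(p)$ / $\text{Bern}(Q)$ marginal and is independent of everything else. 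The distributional subtlety is that $s_1 \sim \text{Bin}(n,p)$ and $s_2 \sim \text{Bin}(N,Q)$ with $T_1, T_2$ chosen to have sizes $s_1$ and $\max\{s_2 - s_1, 0\}$; one must show that the resulting diagonal pattern on $[N]$ is close in TV to i.i.d. $\text{Bern}(p)$ on $S$ and i.i.d. $\text{Bern}(Q)$ on $[N]\setminus S$. This is where the hypotheses $k \le Q\epsilon n/2$, $N \ge (p/Q + \epsilon)n$, and $k^2/N \le \min\{Q/(1-Q), (1-Q)/Q\}$ enter: the first two control the concentration of the binomials (giving the $\exp(-Q\epsilon^2 n^2/32N)$ terms, via a Chernoff/Hoeffding bound on $|T_2| = s_2 - s_1$ being well-defined and of the right size), and the last controls a hypergeometric-vs-binomial or Poissonization-type discrepancy in how the $k$ planted rows land, giving the $\sqrt{k^2(1-Q)/2QN}$ and $\sqrt{k^2 Q/2N(1-Q)}$ terms (these look like $\chi^2$-to-TV bounds, $\TV \le \sqrt{\chi^2/2}$, for comparing the induced diagonal distribution to the product target). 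I would carry this out as a short sequence of lemmas (matching the structure promised in Section \ref{subsec:plantingdiagonals}): one lemma for the diagonal-pattern comparison, one for the ambient-noise block, and one combining them via the conditioning-on-a-random-variable inequality in Fact \ref{tvfacts}(5) conditioning on $(S, s_1, s_2)$.

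Finally, Step 3 lifts the Bernoulli matrix $M^1$ to the target matrix $M^2 \in X^{N\ell \times N\ell}$ by applying an independent multivariate rejection kernel $\textsc{mrk}_{st}$ to each entry $M^1_{(s+1)(t+1)}$, expanding each entry into an $\ell\times\ell$ block. By Lemma \ref{lem:mrk}, each $\textsc{mrk}_{st}$ sends $\text{Bern}(p) \mapsto$ (within $\Delta_{U,V}$ of) $\bigotimes (\mP_{ij}^n)$ and $\text{Bern}(Q) \mapsto$ (within $\Delta_{U,V}$ of) $\bigotimes (\mQ_{ij}^n)$ over the relevant $\ell^2$ pairs. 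Since the $N^2$ blocks are processed with independent randomness and $M^1$ (in the clean model) has independent entries, I would apply the tensorization inequality Fact \ref{tvfacts}(3) over the $N^2$ blocks to get a total error of at most $N^2 \cdot \Delta$ where $\Delta = \max_{U,V}\Delta_{U,V}$; the random permutation $\tau$ just relabels coordinates and commutes with everything by data processing, so under $H_1$ the planted set of size $k$ in $M^1$ becomes a planted set of size $k\ell$ in $M^2$, matching $\mathcal{M}_{N\ell}((\mP_{ij},\mQ_{ij}), k\ell)$, and under $H_0$ we get $\mathcal{M}_{N\ell}((\mQ_{ij}))$. Summing the three steps' errors via Lemma \ref{lem:tvacc} gives the two claimed bounds. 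The main obstacle I expect is Step 2: getting the diagonal-planting and the random-subset embedding to simultaneously produce exactly independent entries with exactly the right marginals up to the claimed (and fairly delicate) error terms — in particular verifying that the somewhat unusual recipe with $s_1, s_2, T_1, T_2$ really does yield the product Bernoulli diagonal, and pinning down the $\sqrt{\cdot}$ error terms via the right divergence inequality rather than a lossier bound. The runtime claim $\textnormal{poly}(N,\ell)$ follows since $\textsc{Graph-Clone}$, the binomial/subset sampling, and each $\textsc{mrk}_{st}$ (with $N_{\textnormal{it}}$ iterations) are all polynomial-time by Lemma \ref{lem:mrk} and the computability of the $(\mP_{ij},\mQ_{ij})$.
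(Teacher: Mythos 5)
Your overall architecture matches the paper's: Step 1 is handled exactly by \textsc{Graph-Clone} (Lemma \ref{lem:cloning}, after an AM--GM check that the chosen $Q$ satisfies the cloning conditions, with the $p=1$ case separate), Step 3 is handled by conditioning on $\tau$, applying Lemma \ref{lem:mrk} blockwise and tensorizing over the $N^2$ blocks to get $N^2\cdot\Delta$, and the three steps are assembled with Lemma \ref{lem:tvacc} and the conditioning properties in Fact \ref{tvfacts}. The gap is in your Step 2, which is the crux of the theorem. You state that what must be shown is that the constructed diagonal is close in total variation to i.i.d.\ $\text{Bern}(p)$ on the size-$n$ embedding set $S$ and i.i.d.\ $\text{Bern}(Q)$ on $[N]\setminus S$. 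That is not the right target and it is also false. The target law $\mathcal{M}_N(\text{Bern}(p),\text{Bern}(Q),k)$ (conditioned on the planted set) requires the diagonal to be $\text{Bern}(p)$ only on the size-$k$ set $U=\pi^{-1}(R)$ and $\text{Bern}(Q)$ on all of $[N]\setminus U$, including the $n-k$ coordinates of $S\setminus U$; by construction those coordinates carry $\text{Bern}(p)$ entries, and the entire difficulty is to hide them among the $\text{Bern}(Q)$ entries. Moreover the claim you propose to prove fails outright: outside $S$ the support size is $\max\{s_2-s_1,0\}$, centered near $NQ-np$, while i.i.d.\ $\text{Bern}(Q)$ on $[N]\setminus S$ has count $\text{Bin}(N-n,Q)$ centered at $(N-n)Q$; since $p>Q$ these means differ by $\Theta(n)$, far exceeding the $O(\sqrt{N})$ fluctuations in the regime where the theorem's error terms are nontrivial, so that total variation tends to one.

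The correct mechanism (and the reason for the otherwise odd-looking $|T_2|=\max\{s_2-s_1,0\}$) is a deficit compensation: conditioned on $U=R'$, the off-diagonal entries already have exactly the right law, the diagonal restricted to $R'$ is exactly $\text{Bern}(p)^{\otimes k}$, and the diagonal restricted to $[N]\setminus R'$ is exchangeable (because $S\setminus R'$ is uniform in $[N]\setminus R'$) with support size $t_2+\max\{s_2-t_1-t_2,0\}\approx s_2-t_1$, which is engineered to be close to $\text{Bin}(N-k,Q)$. So the statement to prove is precisely Lemma \ref{lem:plantingdiagonals}: $\TV\bigl(\mL(t_1,\,t_2+t_4),\,\text{Bin}(k,p)\otimes\text{Bin}(N-k,Q)\bigr)$ is small, where $t_1\sim\text{Bin}(k,p)$, $t_2\sim\text{Bin}(n-k,p)$, $t_3\sim\text{Bin}(N,Q)$, $t_4=\max\{t_3-t_1-t_2,0\}$; the Chernoff control of the event $t_4\neq t_3-t_1-t_2$ gives the $\exp(-Q\epsilon^2n^2/32N)$ terms (these come from Step 2, not from cloning, which is exact), and the shifted-binomial comparisons $K'+\text{Bin}(M-K,Q)$ versus $\text{Bin}(M,Q)$, proved via the $\chi^2$/negative-association bound of Lemma \ref{lem:hidingentries} with $\mathcal{P}=\delta_0,\delta_1$, give the two $\sqrt{\cdot}$ terms. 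Your plan as written would have you proving a false intermediate claim and, even if it were true, it would not yield the stated theorem; you need to recenter the Step 2 analysis on the size-$k$ set $U$ and on the joint law of the two support counts as above.
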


Before proceeding to the proofs in this section, we first establish some additional notation. When all of the pairs $(\mP_{ij}, \mQ_{ij})$ are the same $(\mP, \mQ)$, then denote $\mathcal{M}_{d}( (\mP_{ij}, \mQ_{ij})_{1 \le i, j \le d}, k)$ by $\mathcal{M}_{d}(\mP, \mQ, k)$. Given a $k$-subset $S \subseteq [d]$, let $\mathcal{M}_{d}( (\mP_{ij}, \mQ_{ij})_{1 \le i, j \le d}, S)$ denote $\mathcal{M}_{d}( (\mP_{ij}, \mQ_{ij})_{1 \le i, j \le d}, k)$ conditioned on the selection of $S$ as the planted index set. Define $\mathcal{M}_{d}(\mP, \mQ, S)$ analogously. Similarly, let $\mathcal{V}_{N}\left( \mathcal{P}, \mathcal{Q}, S \right)$ denotes the distribution of vectors $V \in X^N$ with independent entries and $V_i \sim \mathcal{P}$ if $i \in S$ and $V_i \sim \mathcal{Q}$ if $i \not \in S$. Let $\mathcal{V}_{N}\left( \mathcal{P}, \mathcal{Q}, k \right)$ denote the mixture of $\mathcal{V}_{N}\left( \mathcal{P}, \mathcal{Q}, S \right)$ induced by choosing $S$ uniformly at random from all $k$-subsets of $[N]$. Let $\mG(n, S, p, q)$ denote the distribution of planted dense subgraph instances from $\mG(n, k, p, q)$ conditioned on the subgraph being planted on the vertex set $S$ where $|S| = k$. Given an algorithm $\mathcal{A}$ with $k$ steps, let $\mathcal{A}_i$ denote the $i$th step of $\mathcal{A}$ and $\mathcal{A}_{i\text{-}j}$ denote the procedure formed by steps $i$ through $j$. Each time this notation is used, we clarify the intended initial and final variables when $\mathcal{A}_{i}$ and $\mathcal{A}_{i\text{-}j}$ are viewed as Markov kernels.

\subsection{Graph Cloning}
\label{subsec:graphcloning}

\begin{figure}[t!]
\begin{algbox}
\textbf{Algorithm} \textsc{Graph-Clone}

\vspace{1mm}

\textit{Inputs}: Graph $G \in \mG_n$, the number of copies $t$, parameters $0 < q < p \le 1$ and $0 < Q < P \le 1$ satisfying $\frac{1 - p}{1 - q} \le \left( \frac{1 - P}{1 - Q} \right)^t$ and $\left( \frac{P}{Q} \right)^t \le \frac{p}{q}$

\begin{enumerate}
\item Generate $x^{ij} \in \{0, 1\}^t$ for each $1 \le i < j \le n$ such that:
\begin{itemize}
\item If $\{i, j \} \in E(G)$, sample $x^{ij}$ from the distribution on $\{0, 1\}^t$ with
$$\bP[x^{ij} = v] = \frac{1}{p - q} \left[ (1 - q) \cdot P^{|v|_1} (1 - P)^{t - |v|_1} - (1 - p) \cdot Q^{|v|_1} (1 - Q)^{t - |v|_1} \right]$$
\item If $\{i, j \} \not \in E(G)$, sample $x^{ij}$ from the distribution on $\{0, 1\}^t$ with
$$\bP[x^{ij} = v] = \frac{1}{p - q} \left[ p \cdot Q^{|v|_1} (1 - Q)^{t - |v|_1} - q \cdot P^{|v|_1} (1 - P)^{t - |v|_1} \right]$$
\end{itemize}
\item Output the graphs $(G_1, G_2, \dots, G_t)$ where $\{i, j\} \in E(G_k)$ if and only if $x^{ij}_k = 1$
\end{enumerate}
\vspace{1mm}

\end{algbox}
\caption{Subroutine $\textsc{Graph-Clone}$ for producing independent samples from planted graph problems.}
\label{fig:tosubmatrix}
\end{figure}

We begin with the subroutine $\textsc{Graph-Clone}$ which produces several independent samples from a planted subgraph problems given a single sample. This procedure is a simple generalization of $\textsc{PDS-Cloning}$ in Section 10 of \cite{brennan2018reducibility} and can be viewed as an exact variant of a simple multivariate rejection kernel to products of Bernoulli random variables.

\begin{lemma}[Graph Cloning] \label{lem:cloning}
Let $t \in \mathbb{N}$, $0 < q < p \le 1$ and $0 < Q < P \le 1$ satisfy that
$$\frac{1 - p}{1 - q} \le \left( \frac{1 - P}{1 - Q} \right)^t \quad \text{and} \quad \left( \frac{P}{Q} \right)^t \le \frac{p}{q}$$
Then the algorithm $\mathcal{A} = \textsc{Graph-Clone}$ runs in $\textnormal{poly}(t, n)$ time and satisfies that for each $S \subseteq [n]$,
$$\mathcal{A}\left( \mG(n, q) \right) \sim \mG(n, Q)^{\otimes t} \quad \text{and} \quad \mathcal{A}\left( \mG(n, S, p, q) \right) \sim \mG(n, S, P, Q)^{\otimes t}$$
\end{lemma}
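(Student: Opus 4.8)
### Proof Proposal for Lemma~\ref{lem:cloning} (Graph Cloning)

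The plan is to verify that the per-edge vectors $x^{ij}$ are sampled from genuine probability distributions on $\{0,1\}^t$, and then to show that the marginal law of each $x^{ij}_k$ matches a Bernoulli with the right parameter, independently across $k$, conditioned on the original edge indicator. The key observation is that the distributions in Step~1 are designed so that, after averaging over whether $\{i,j\}$ was an edge of $G$ (with the appropriate probability, $p$ or $q$ depending on whether $i,j \in S$), the resulting mixture is exactly a product of $t$ i.i.d.\ Bernoulli's. Concretely, for $i \ne j$ with at least one endpoint outside $S$, or for both endpoints inside $S$, the prior probability that $\{i,j\} \in E(G)$ is $q$ or $p$ respectively, so I would compute
\[
\bP[x^{ij} = v] = p_{ij} \cdot \bP[x^{ij}=v \mid \{i,j\}\in E(G)] + (1-p_{ij})\cdot \bP[x^{ij}=v \mid \{i,j\}\notin E(G)]
\]
where $p_{ij} \in \{p, q\}$, and check that the two $\frac{1}{p-q}[\,\cdots\,]$ terms telescope: the $P^{|v|_1}(1-P)^{t-|v|_1}$ contributions cancel when $p_{ij} = q$ (leaving $Q^{|v|_1}(1-Q)^{t-|v|_1}$), and the $Q^{|v|_1}(1-Q)^{t-|v|_1}$ contributions cancel when $p_{ij} = p$ (leaving $P^{|v|_1}(1-P)^{t-|v|_1}$). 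This immediately gives that each $x^{ij}$ is distributed as a product of $t$ i.i.d.\ $\text{Bern}(Q)$ coordinates when $p_{ij}=q$ and as a product of $t$ i.i.d.\ $\text{Bern}(P)$ coordinates when $p_{ij}=p$, which is exactly the claim: $\mathcal{A}(\mG(n,q)) \sim \mG(n,Q)^{\otimes t}$ and $\mathcal{A}(\mG(n,S,p,q)) \sim \mG(n,S,P,Q)^{\otimes t}$, since the $x^{ij}$ are generated independently over $1 \le i < j \le n$ and the edges of $\mG(n,q)$ (resp.\ $\mG(n,S,p,q)$) are independent.

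The main obstacle — and the only place the hypotheses $\frac{1-p}{1-q} \le \left(\frac{1-P}{1-Q}\right)^t$ and $\left(\frac{P}{Q}\right)^t \le \frac{p}{q}$ are used — is verifying that the formulas in Step~1 define \emph{bona fide} probability distributions, i.e.\ that all the claimed probabilities are nonnegative and sum to one. For the edge case, I would check $\bP[x^{ij}=v] \ge 0$ by noting that $(1-q)P^{|v|_1}(1-P)^{t-|v|_1} \ge (1-p)Q^{|v|_1}(1-Q)^{t-|v|_1}$ is equivalent to $\frac{1-q}{1-p} \ge \left(\frac{Q}{P}\right)^{|v|_1}\left(\frac{1-Q}{1-P}\right)^{t-|v|_1}$; since $Q < P$ the right side is maximized over $|v|_1 \in \{0,\dots,t\}$ at $|v|_1 = 0$, giving the worst case $\left(\frac{1-Q}{1-P}\right)^t \le \frac{1-q}{1-p}$, which is the first hypothesis. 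For the non-edge case, $pQ^{|v|_1}(1-Q)^{t-|v|_1} \ge qP^{|v|_1}(1-P)^{t-|v|_1}$ rearranges to $\frac{p}{q} \ge \left(\frac{P}{Q}\right)^{|v|_1}\left(\frac{1-P}{1-Q}\right)^{t-|v|_1}$, whose worst case over $|v|_1$ is at $|v|_1 = t$, giving $\left(\frac{P}{Q}\right)^t \le \frac{p}{q}$, the second hypothesis. Normalization to one follows from $\sum_{v \in \{0,1\}^t} R^{|v|_1}(1-R)^{t-|v|_1} = 1$ for any $R$, applied with $R = P$ and $R = Q$, together with the prefactor $\frac{1}{p-q}$ and the fact that in the edge case the coefficients are $\frac{1-q}{p-q} - \frac{1-p}{p-q} = 1$ and in the non-edge case $\frac{p}{p-q} - \frac{q}{p-q} = 1$.

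Finally, the running time is $\text{poly}(t,n)$: there are $\binom{n}{2}$ pairs, and each $x^{ij}$ lives in $\{0,1\}^t$, but one need not enumerate all $2^t$ outcomes — since the target conditional laws are products of i.i.d.\ Bernoulli's, one can instead recognize that sampling from the Step~1 distributions reduces (by the telescoping identity above, run in reverse) to a small mixture indexed by $|v|_1 \in \{0,\dots,t\}$ via the binomial sufficient statistic, or simply sample $|v|_1$ from the appropriate signed combination of $\text{Bin}(t,P)$ and $\text{Bin}(t,Q)$ pmfs and then place the ones uniformly. Either way this is $\text{poly}(t)$ per pair, hence $\text{poly}(t,n)$ overall, and the reduction can be carried out with the biased-coin and exact-arithmetic primitives assumed in Section~\ref{sec:avgred}. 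I would present the nonnegativity/normalization computation first, then conclude with the one-line mixture identity and the independence-over-pairs remark.
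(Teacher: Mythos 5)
Your proposal is correct and follows essentially the same route as the paper's proof: check nonnegativity of the two Step-1 mass functions via the extremes of the likelihood ratio $\left(\frac{P}{Q}\right)^{|v|_1}\left(\frac{1-P}{1-Q}\right)^{t-|v|_1}$ (which is exactly where the two hypotheses enter), check normalization, verify the two mixture identities $(1-p)R_0+pR_1 = P^{|v|_1}(1-P)^{t-|v|_1}$ and $(1-q)R_0+qR_1 = Q^{|v|_1}(1-Q)^{t-|v|_1}$, and conclude by independence across pairs. Your additional remark on sampling via the sufficient statistic $|v|_1$ in $\mathrm{poly}(t)$ time is a reasonable elaboration of the runtime claim that the paper leaves implicit.
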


\begin{proof}
Let $R_0, R_1 : \{0, 1\}^t \to \mathbb{R}$ be given by
\begin{align*}
R_0(v) &= \frac{1}{p - q} \left[ p \cdot Q^{|v|_1} (1 - Q)^{t - |v|_1} - q \cdot P^{|v|_1} (1 - P)^{t - |v|_1} \right] \\
R_1(v) &= \frac{1}{p - q} \left[ (1 - q) \cdot P^{|v|_1} (1 - P)^{t - |v|_1} - (1 - p) \cdot Q^{|v|_1} (1 - Q)^{t - |v|_1} \right]
\end{align*}
Now observe that for each $v \in \{0, 1\}^t$, the fact that $P > Q$ implies that
$$\frac{1 - p}{1 - q} \le \left( \frac{1 - P}{1 - Q} \right)^t \le \frac{P^{|v|_1} (1 - P)^{t - |v|_1}}{Q^{|v|_1} (1 - Q)^{t - |v|_1}} \le \left( \frac{P}{Q} \right)^t \le \frac{p}{q}$$
which implies that $R_0(v) \ge 0$ and $R_1(v) \ge 0$ for each $v \in \{0, 1\}^t$. Furthermore, we have that
$$\sum_{v \in \{0, 1\}^t} R_0(v) = \sum_{v \in \{0, 1\}^t} R_1(v) = 1$$
which implies that $R_0$ and $R_1$ are well-defined probability mass functions. Also observe that
\begin{align*}
(1 - p) \cdot R_0(v) + p \cdot R_1(v) &= P^{|v|_1} (1 - P)^{t - |v|_1} \\
(1 - q) \cdot R_0(v) + q \cdot R_1(v) &= Q^{|v|_1} (1 - Q)^{t - |v|_1}
\end{align*}
Therefore it follows that if $\mathbf{1}_{\{i, j\} \in E(G)} \sim \text{Bern}(p)$, then $x^{ij} \sim \text{Bern}(P)^{\otimes t}$ and if $\mathbf{1}_{\{i, j\} \in E(G)} \sim \text{Bern}(q)$, then $x^{ij} \sim \text{Bern}(Q)^{\otimes t}$. Since the edge indicators $\mathbf{1}_{\{i, j\} \in E(G)}$ are independent in each of $\mG(n, q)$ and $\mG(n, S, p, q)$, this implies that $(G_1, G_2, \dots, G_t) \sim \mG(n, Q)^{\otimes t}$ if $G \sim \mG(n, q)$ and $(G_1, G_2, \dots, G_t) \sim \mG(n, P)^{\otimes t}$ if $G \sim \mG(n, k, p, q)$, completing the proof of the lemma.
\end{proof}

\subsection{Planting Diagonals by Embedding as a Principal Minor}
\label{subsec:plantingdiagonals}

The next two lemmas are a key technical component in the analysis of $\textsc{To-Submatrix}$. Specifically, they are crucial to showing the correctness of Step 2 in $\textsc{To-Submatrix}$, which plants missing diagonal entries while randomly embedding entries derived from the adjacency matrix of the input instance as a principal minor into a larger matrix to hide the planted entries in total variation. We remark that the applications of Cauchy-Schwarz reducing the proof of the second lemma to bounding $\chi^2$ divergences are unlikely to be tight. However, the resulting bounds are sufficient for our purposes. We also remark that Lemma \ref{lem:plantingdiagonals} can be proven by directly bounding sums of differences of binomial coefficients. Instead, our approach yields more elegant computations, as carried out in Lemma \ref{lem:hidingentries}, and can be generalized to bound sums of random variables beyond binomial distributions.

\begin{lemma} \label{lem:hidingentries}
Suppose that $\mathcal{P}$ and $\mathcal{Q}$ are probability distributions on a measurable space $(X,\mathcal{B})$ where $\mathcal{P}$ is absolutely continuous with respect to $\mathcal{Q}$. Then for any positive integers $k$ and $m$ with $k^2 \cdot \chi^2( \mathcal{P} \| \mathcal{Q}) \le m$, we have that
$$\chi^2\left( \mathcal{V}_m(\mathcal{P}, \mathcal{Q}, k) \, \| \, \mathcal{Q}^{\otimes m} \right) \le \frac{2k^2 \cdot \chi^2( \mathcal{P} \| \mathcal{Q})}{m}$$
\end{lemma}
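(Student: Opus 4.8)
The plan is to compute the $\chi^2$ divergence directly using the standard mixture formula. Write $\mathcal{V}_m(\mathcal{P}, \mathcal{Q}, k)$ as the mixture $\frac{1}{\binom{m}{k}} \sum_{S} \mathcal{V}_m(\mathcal{P}, \mathcal{Q}, S)$ over $k$-subsets $S \subseteq [m]$. Since $\chi^2(\mu \| \nu) + 1 = \bE_{x \sim \nu}[(d\mu/d\nu)^2]$ and here $\mu$ is a mixture, expanding the square gives
$$\chi^2\left( \mathcal{V}_m(\mathcal{P}, \mathcal{Q}, k) \, \| \, \mathcal{Q}^{\otimes m} \right) + 1 = \frac{1}{\binom{m}{k}^2} \sum_{S, T} \bE_{x \sim \mathcal{Q}^{\otimes m}}\left[ \frac{d\mathcal{V}_m(\mathcal{P}, \mathcal{Q}, S)}{d\mathcal{Q}^{\otimes m}}(x) \cdot \frac{d\mathcal{V}_m(\mathcal{P}, \mathcal{Q}, T)}{d\mathcal{Q}^{\otimes m}}(x) \right].$$
Each Radon-Nikodym derivative factorizes over coordinates: it equals $\prod_{i \in S} \frac{d\mathcal{P}}{d\mathcal{Q}}(x_i)$. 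Therefore, by independence of the coordinates under $\mathcal{Q}^{\otimes m}$, the expectation of the product factorizes coordinatewise: a coordinate $i$ contributes $1$ if $i \notin S \cup T$, contributes $\bE_{\mathcal{Q}}[d\mathcal{P}/d\mathcal{Q}] = 1$ if $i$ is in exactly one of $S, T$, and contributes $\bE_{\mathcal{Q}}[(d\mathcal{P}/d\mathcal{Q})^2] = 1 + \chi^2(\mathcal{P} \| \mathcal{Q})$ if $i \in S \cap T$. Hence the inner expectation equals $(1 + \chi^2(\mathcal{P}\|\mathcal{Q}))^{|S \cap T|}$, and writing $\rho = \chi^2(\mathcal{P}\|\mathcal{Q})$ and letting $J$ be the size of the overlap of two independent uniform $k$-subsets of $[m]$, we obtain
$$\chi^2\left( \mathcal{V}_m(\mathcal{P}, \mathcal{Q}, k) \, \| \, \mathcal{Q}^{\otimes m} \right) = \bE\left[ (1 + \rho)^J \right] - 1,$$
where $J$ has a hypergeometric distribution with parameters $(m, k, k)$.

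The remaining task is to bound $\bE[(1+\rho)^J] - 1$ under the hypothesis $k^2 \rho \le m$. The key facts are that $J$ is stochastically dominated by a $\mathrm{Bin}(k, k/m)$ random variable (negative association / the standard stochastic domination of hypergeometric by binomial), and that $\bE[(1+\rho)^{\mathrm{Bin}(k, k/m)}] = \left(1 + \frac{k}{m}\rho\right)^k$. Since $x \mapsto (1+\rho)^x$ is increasing for $\rho \ge 0$, stochastic domination gives $\bE[(1+\rho)^J] \le (1 + \frac{k\rho}{m})^k$. Using $1 + t \le e^t$ and then $e^t \le 1 + 2t$ for $t \in [0, \log 2]$ — which applies here because $t = k^2\rho/m \le 1 < \log 2$... actually $\log 2 \approx 0.693 < 1$, so one should instead use $e^t \le 1 + 2t$ valid on $[0, 1.25]$ or simply bound $e^t - 1 \le t \cdot \frac{e - 1}{1} \le 2t$ for $t \le 1$ — we get $\bE[(1+\rho)^J] - 1 \le e^{k^2\rho/m} - 1 \le 2k^2\rho/m$, which is exactly the claimed bound.

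The main obstacle, such as it is, is the stochastic domination step: one must verify that the hypergeometric overlap $J$ is dominated by $\mathrm{Bin}(k, k/m)$ in the convex (or usual stochastic) order so that the bound transfers to the increasing function $(1+\rho)^x$. This is classical, and it also follows cleanly by conditioning: fixing $S$, the overlap $|S \cap T|$ for $T$ a uniform $k$-subset is hypergeometric, and one can couple it to $k$ i.i.d. Bernoulli$(k/m)$ draws by a standard sampling-without-replacement versus with-replacement argument, or invoke Hoeffding's observation that sampling without replacement is dominated in convex order by sampling with replacement. Everything else is a routine factorization of a product measure expectation and an elementary inequality $e^t - 1 \le 2t$ on the relevant range, so I would spend the bulk of the written proof on the clean factorization identity $\chi^2 = \bE[(1+\rho)^J] - 1$ and cite the domination fact.
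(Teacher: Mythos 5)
Your proposal is correct and follows essentially the same route as the paper: the same mixture/Fubini computation reducing the $\chi^2$ divergence to $\bE\left[(1+\rho)^{|S\cap T|}\right]$, the same bound by the binomial moment generating function $\left(1+\tfrac{k\rho}{m}\right)^k$ (the paper invokes negative association of the indicators $\mathbf{1}_{\{i\in S\}}$, which is the same classical sampling-without-replacement fact as the Hoeffding convex-order domination you cite), and the same elementary inequality $e^t-1\le 2t$ on $[0,1]$. One small caveat: drop the ``usual stochastic order'' option in your domination step, since $J$ and $\mathrm{Bin}(k,k/m)$ have equal means so only the convex order can hold --- which suffices here because $x\mapsto(1+\rho)^x$ is convex.
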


\begin{proof}
Let $f : X \to [0, \infty)$ be the Radon-Nikodym derivative $f = \frac{d\mathcal{P}}{d\mathcal{Q}}$. Note that $\mathcal{V}_m(\mathcal{P}, \mathcal{Q}, k)$ is absolutely continuous with respect to $\mathcal{Q}^{\otimes m}$ with Radon-Nikodym derivative
$$\frac{d\mathcal{V}_m(\mathcal{P}, \mathcal{Q}, k)}{d\mathcal{Q}^{\otimes m}} (x) = \bE_{S \sim \mathcal{U}_{k, m}} \left[ \prod_{i \in S} f(x_i) \right]$$
for each $x \in X^m$ where $\mathcal{U}_{k, m}$ is the uniform distribution on $k$-subsets of $[m]$. Now note that by Fubini's theorem
\begin{align*}
\chi^2\left( \mathcal{V}_m(\mathcal{P}, \mathcal{Q}, k) \, \| \, \mathcal{Q}^{\otimes m} \right) + 1 &= \bE_{x \sim \mathcal{Q}^{\otimes m}} \left[ \left( \frac{d\mathcal{V}_m(\mathcal{P}, \mathcal{Q}, k)}{d\mathcal{Q}^{\otimes m}} (x) \right)^2 \right] \\
&= \bE_{x \sim \mathcal{Q}^{\otimes m}} \left[ \bE_{S \sim \mathcal{U}_{k, m}} \left[ \prod_{i \in S} f(x_i) \right] \cdot \bE_{T \sim \mathcal{U}_{k, m}} \left[ \prod_{i \in S} f(x_i) \right] \right] \\
&= \bE_{S, T \sim \mathcal{U}_{k, m}} \left[ \bE_{x \sim \mathcal{Q}^{\otimes m}} \left[ \left( \prod_{i \in S} f(x_i) \right) \left( \prod_{i \in T} f(x_i) \right) \right] \right] \\
&= \bE_{S, T \sim \mathcal{U}_{k, m}} \left[ \prod_{i \in S \cap T} \bE_{x_i \sim \mathcal{Q}}\left[f(x_i)^2\right] \prod_{i \in S \backslash T} \bE_{x_i \sim \mathcal{Q}}\left[f(x_i) \right] \prod_{i \in T \backslash S} \bE_{x_i \sim \mathcal{Q}}\left[f(x_i)\right] \right] \\
&= \bE_{S, T \sim \mathcal{U}_{k, m}} \left[ (1 + \chi^2(\mathcal{P} \| \mathcal{Q}))^{|S \cap T|} \right]
\end{align*}
where the last equality holds since $\bE_{x_i \sim \mathcal{Q}}[f(x_i)] = 1$ and $1 + \chi^2(\mathcal{P} \| \mathcal{Q}) = \bE_{x_i \sim \mathcal{Q}}[f(x_i)^2]$. We now apply an argument in \cite{addario2010combinatorial} to bound this last quantity. Observe that $|S \cap T| \sim \text{Hypergeometric}(m, k, k)$ and is identically distributed to $|[k] \cap S| = \sum_{i = 1}^k \mathbf{1}_{\{i \in S\}}$. As shown in Section 3.2 of \cite{joag1983negative}, the variables $\mathbf{1}_{\{i \in S\}}$ are negatively associated which implies that
\begin{align*}
\bE_{S, T \sim \mathcal{U}_{k, m}} \left[ (1 + \chi^2(\mathcal{P} \| \mathcal{Q}))^{|S \cap T|} \right] &= \bE_{S \sim \mathcal{U}_{k, m}} \left[ \prod_{i = 1}^k (1 + \chi^2(\mathcal{P} \| \mathcal{Q}))^{\mathbf{1}_{\{i \in S\}}} \right] \\
&\le \prod_{i = 1}^k \bE_{S \sim \mathcal{U}_{k, m}} \left[  (1 + \chi^2(\mathcal{P} \| \mathcal{Q}))^{k \cdot \mathbf{1}_{\{i \in S\}}} \right] \\
&= \left( \frac{k}{m} (1 + \chi^2(\mathcal{P} \| \mathcal{Q})) + 1 - \frac{k}{m} \right)^k \\
&= \left( 1 + \frac{k}{m} \cdot \chi^2( \mathcal{P} \| \mathcal{Q}) \right)^k \\
&\le \exp\left( \frac{k^2 \cdot \chi^2( \mathcal{P} \| \mathcal{Q})}{m} \right) \le 1 + \frac{2k^2 \cdot \chi^2( \mathcal{P} \| \mathcal{Q})}{m}
\end{align*}
if $k^2 \cdot \chi^2( \mathcal{P} \| \mathcal{Q}) \le m$. This completes the proof of the lemma.
\end{proof}

We now apply this lemma to bound the total variation between support sizes of the matrix diagonals produced in $\textsc{To-Submatrix}$ and the target matrix distributions.

\begin{lemma}[Planting Diagonals] \label{lem:plantingdiagonals}
Suppose that $0 < Q < P \le 1$ and $N \ge \left( \frac{P}{Q} + \epsilon \right) n$ where $\epsilon > 0$. Let $k \le n$ satisfy that
$$k \le \frac{Q \epsilon n}{2} \quad \text{and} \quad \frac{k^2}{N} \le \min \left\{ \frac{Q}{1 - Q}, \frac{1 - Q}{Q} \right\}$$
Let $t_1 \sim \textnormal{Bin}(k, P)$, $t_2 \sim \textnormal{Bin}(n - k, P)$ and $t_3 \sim \textnormal{Bin}(N, Q)$ be independent and set $t_4 = \max \{ t_3 - t_1 - t_2, 0 \}$. Then it holds that
\begin{align*}
\TV\left( \mL(t_1, t_2 + t_4), \, \textnormal{Bin}(k, P) \otimes \textnormal{Bin}(N - k, Q) \right) &\le 4 \cdot \exp\left( - \frac{Q \epsilon^2 n^2}{32N} \right) + \sqrt{\frac{k^2(1 - Q)}{2NQ}} + \sqrt{\frac{k^2Q}{2N(1 - Q)}} \\
\TV\left( \mL(t_1 + t_2 + t_4), \, \textnormal{Bin}(N, Q) \right) &\le 4 \cdot \exp\left( - \frac{Q \epsilon^2 n^2}{32N} \right)
\end{align*}
\end{lemma}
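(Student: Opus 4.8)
The plan is to derive both bounds from a single coupling, after which the second bound needs only a Chernoff estimate and the first bound needs in addition one application of Lemma~\ref{lem:hidingentries}. Start from the deterministic identities $t_1+t_2+t_4 = \max\{t_1+t_2,\,t_3\}$ and, on the event $E = \{t_3 \ge t_1+t_2\}$, $t_2+t_4 = t_3-t_1$; thus on $E$ the random vectors $(t_1,t_2+t_4)$ and $(t_1,t_3-t_1)$ coincide and the scalar $t_1+t_2+t_4$ equals $t_3$. Since $t_1+t_2 \sim \text{Bin}(n,P)$ has mean $nP$ while $t_3 \sim \text{Bin}(N,Q)$ has mean $NQ \ge (P+Q\epsilon)n = nP+nQ\epsilon$, I split the mean gap $nQ\epsilon$ into halves and apply a multiplicative Chernoff bound to the lower tail of $t_3$ and a Chernoff/Bernstein bound to the upper tail of $t_1+t_2$, obtaining
\[
\bP[E^c] = \bP[t_3 < t_1+t_2] \le 2\exp\!\left(-\tfrac{Q\epsilon^2 n^2}{32N}\right),
\]
where $N \ge (P/Q+\epsilon)n > n$ is used to see that the $t_3$-tail dominates and absorbs the $t_1+t_2$-tail; the multiplicative rather than additive form is needed on the $t_3$-tail to reach the stated order when $Q$ is small. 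Since $(t_1,t_2+t_4)=(t_1,t_3-t_1)$ and $t_1+t_2+t_4=t_3$ on $E$, a coupling bound gives $\TV(\mL(t_1+t_2+t_4),\text{Bin}(N,Q)) \le \bP[E^c]$ (this is the second bound) and $\TV(\mL(t_1,t_2+t_4),\mL(t_1,t_3-t_1)) \le \bP[E^c]$, so it remains to bound $\TV\big(\mL(t_1,t_3-t_1),\,\text{Bin}(k,P)\otimes\text{Bin}(N-k,Q)\big)$.

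Because $t_1 \perp t_3$ with $t_1 \sim \text{Bin}(k,P)$, conditioning on $t_1$ through Fact~\ref{tvfacts} reduces this to the uniform estimate: for every fixed integer $0 \le a \le k$, $\TV\big(\text{Bin}(N,Q)-a,\ \text{Bin}(N-k,Q)\big)$ is at most $\sqrt{k^2(1-Q)/(2NQ)}+\sqrt{k^2Q/(2N(1-Q))}$, since $\mL(t_3-t_1 \mid t_1=a) = \text{Bin}(N,Q)-a$ while the target conditional is $\text{Bin}(N-k,Q)$ for all $a$. I would prove this by inserting $\text{Bin}(N-a,Q)$ and recognizing each leg of the triangle inequality as a ``planting'' operation controlled by Lemma~\ref{lem:hidingentries}. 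For the first leg, $\text{Bin}(N,Q)-a$ is the image of $\text{Bern}(Q)^{\otimes N}$ under $x\mapsto(\sum_i x_i)-a$, whereas $\text{Bin}(N-a,Q)$ is the image of $\mathcal{V}_N(\text{Bern}(1),\text{Bern}(Q),a)$ under the same map, so data processing (Fact~\ref{tvfacts}) and Cauchy--Schwarz give
\[
\TV\big(\text{Bin}(N,Q)-a,\ \text{Bin}(N-a,Q)\big) \le \TV\big(\mathcal{V}_N(\text{Bern}(1),\text{Bern}(Q),a),\,\text{Bern}(Q)^{\otimes N}\big) \le \tfrac12\sqrt{\chi^2\big(\mathcal{V}_N(\text{Bern}(1),\text{Bern}(Q),a)\,\|\,\text{Bern}(Q)^{\otimes N}\big)},
\]
and Lemma~\ref{lem:hidingentries} bounds the right side by $\sqrt{a^2\,\chi^2(\text{Bern}(1)\|\text{Bern}(Q))/(2N)} = \sqrt{a^2(1-Q)/(2NQ)} \le \sqrt{k^2(1-Q)/(2NQ)}$, using $\chi^2(\text{Bern}(1)\|\text{Bern}(Q))=(1-Q)/Q$ and $a\le k$. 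Symmetrically $\text{Bin}(N-k,Q)$ is the image of $\mathcal{V}_{N-a}(\text{Bern}(0),\text{Bern}(Q),k-a)$ under $x\mapsto\sum_i x_i$ and $\text{Bin}(N-a,Q)$ the image of $\text{Bern}(Q)^{\otimes(N-a)}$, so with $\chi^2(\text{Bern}(0)\|\text{Bern}(Q))=Q/(1-Q)$ the same argument gives $\TV\big(\text{Bin}(N-a,Q),\ \text{Bin}(N-k,Q)\big) \le \sqrt{(k-a)^2Q/(2(1-Q)(N-a))} \le \sqrt{k^2Q/(2N(1-Q))}$, the last step using $(k-a)^2/(N-a)\le k^2/N$. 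Adding these two bounds and then combining with $\bP[E^c]$ via the triangle inequality yields the first bound.

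I expect the only genuine obstacle to be verifying the hypotheses of Lemma~\ref{lem:hidingentries} in its two invocations, namely $a^2(1-Q)/Q \le N$ and $(k-a)^2Q/(1-Q) \le N-a$; both follow from the assumption $k^2/N \le \min\{Q/(1-Q),(1-Q)/Q\}$ together with $a \le k \le N$ and the elementary inequality $(k-a)^2/(N-a)\le k^2/N$, once one notes that $\text{Bern}(1)$ and $\text{Bern}(0)$ are each absolutely continuous with respect to $\text{Bern}(Q)$ so the relevant $\chi^2$ divergences are finite (and the edge cases $a=0$ and $a=k$ are trivial). The remaining ingredients—the coupling bound, the Chernoff estimate, and the reduction to fixed $a$ by conditioning—are routine, and the constant $32$ leaves enough slack to absorb the factors of $2$ that accumulate along the way.
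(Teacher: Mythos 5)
Your proposal is correct and follows essentially the same route as the paper's proof: a Chernoff bound on the event $\{t_3 < t_1 + t_2\}$ (the paper conditions on $t_1 = m$ first, you don't, but the same split of the mean gap $nQ\epsilon$ and the same multiplicative bounds give the $2\exp(-Q\epsilon^2 n^2/32N)$ estimate), followed by conditioning on $t_1 = a$ and a two-leg triangle inequality controlled by Lemma \ref{lem:hidingentries} applied to $\delta_1$ and $\delta_0$ against $\text{Bern}(Q)$. The only cosmetic differences are that you use a direct coupling on $E$ where the paper invokes the conditioning-on-an-event property (costing it an extra factor of $2$ it can afford), and your intermediate distribution is $\text{Bin}(N-a,Q)$ rather than the paper's $\text{Bin}(N-k+a,Q)$; both yield the stated constants.
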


\begin{proof}
First consider applying Lemma \ref{lem:hidingentries} to $\mathcal{P} = \delta_0$ and $\mathcal{Q} = \text{Bern}(Q)$. Note that since $Q \in (0, 1)$, this choice of $\mathcal{P}$ is absolutely continuous with respect to $\mathcal{Q}$. We have that if $K$ and $M$ are such that $K^2 Q \le M (1 - Q)$ then
$$\chi^2\left( \mathcal{V}_M( \delta_0, \text{Bern}(Q), K) \| \text{Bern}(Q)^{\otimes M} \right) \le \frac{2K^2 Q}{M (1 - Q)}$$
Now by Cauchy-Schwarz and the data-processing property in Fact \ref{tvfacts} on taking the sum of the entries of the vectors, we have that
\begin{align*}
\TV\left( \text{Bin}(M - K, Q), \text{Bin}(M, Q) \right) &\le \TV\left( \mathcal{V}_M( \delta_0, \text{Bern}(Q), K), \, \text{Bern}(Q)^{\otimes M} \right) \\
&\le \frac{1}{2} \sqrt{\chi^2\left( \mathcal{V}_M( \delta_0, \text{Bern}(Q), K) \| \text{Bern}(Q)^{\otimes M} \right)} \\
&\le \sqrt{\frac{K^2 Q}{2M (1 - Q)}}
\end{align*}
Now apply Lemma \ref{lem:hidingentries} to $\mathcal{P} = \delta_1$ and $\mathcal{Q} = \text{Bern}(Q)$. By the same argument, if $K^2 (1 - Q) \le M Q$, then we have that
$$\TV\left( K + \text{Bin}(M - K, Q), \text{Bin}(M, Q) \right) \le \sqrt{\frac{K^2 (1 - Q)}{2MQ}}$$
Combining these two inequalities and applying the triangle inequality in Fact \ref{tvfacts} yields that
\begin{align*}
\TV\left( K' + \text{Bin}(M - K, Q), \text{Bin}(M, Q) \right) &\le \TV\left( K' + \text{Bin}(M - K, Q), \text{Bin}(M - K + K', Q) \right) \\
&\quad \quad + \TV\left( \text{Bin}(M - K + K', Q), \text{Bin}(M, Q) \right) \\
&\le \sqrt{\frac{K'^2 (1 - Q)}{2(M - K + K') Q}} + \sqrt{\frac{(K - K')^2 Q}{2M (1 - Q)}}
\end{align*}
as long as $K' \le K$ and it holds that
$$K'^2 \le \frac{(M - K + K') Q}{1 - Q} \quad \text{and} \quad (K - K')^2 \le \frac{M(1 - Q)}{Q}$$
Note that both of these inequalities are satisfied if $K^2/M \le \min \left\{ Q^{-1} (1 - Q), Q(1 - Q)^{-1} \right\}$. Standard Chernoff bounds imply that the binomial distribution satisfies the following concentration inequalities
\begin{align*}
\bP\left[\text{Bin}(M, r) > (1 + \delta) rM \right] &\le \exp\left( - \frac{\delta^2}{2 + \delta} \cdot rM \right) \\
\bP\left[\text{Bin}(M, r) < (1 - \delta) rM \right] &\le \exp\left( - \frac{\delta^2}{2} \cdot rM \right)
\end{align*}
for $\delta > 0$. These inequalities can be derived by standard Chernoff bounds. Now observe that if $t_3 \ge Q N - \frac{Q \epsilon n}{2} + \frac{m}{2}$ and $t_2 \le P(n - k) + \frac{Q \epsilon n}{2} - \frac{m}{2}$ hold, then $t_3 \ge m + t_2$ since $QN \ge (P + Q\epsilon)n$. Therefore we have that
\begin{align*}
\bP\left[ t_4 \neq t_3 - t_1 - t_2 | t_1 = m  \right] &= \bP[t_3 < m + t_2] \\
&\le \bP\left[ t_3 < Q N - \frac{Q \epsilon n}{2} +\frac{m}{2} \right] + \bP\left[ t_2 > P(n-k) + \frac{Q \epsilon n}{2} - \frac{m}{2} \right] \\
&\le \bP\left[ t_3 < Q N - \frac{Q \epsilon n}{4} \right] + \bP\left[ t_2 > P(n-k) + \frac{Q \epsilon n}{4} \right] \\
&\le \exp\left( - \frac{1}{2} \left( \frac{\epsilon n}{4N} \right)^2 \cdot QN \right) \\
&\quad \quad + \exp\left( - \frac{1}{2 +\frac{Q \epsilon n}{4P(n - k)}} \cdot  \left( \frac{Q \epsilon n}{4P(n - k)} \right)^2 \cdot P(n - k) \right) \\
&\le \exp\left( - \frac{Q\epsilon^2 n^2}{32N} \right) + \exp\left( - \frac{1}{8P(n - k) + Q\epsilon n} \cdot \frac{Q^2 \epsilon^2 n^2}{4} \right) \\
&\le 2 \cdot \exp\left( - \frac{Q \epsilon^2 n^2}{32N} \right)
\end{align*}
The first inequality above is a union bound, the second inequality holds since $2m \le 2k \le Q \epsilon n$ and the last inequality holds since $8P(n - k) + Q \epsilon n \le 8QN$. Marginalizing this bound over $t_1$ yields that
$$\bP[t_4 \neq t_3 - t_1 - t_2] = \bE_{m \sim \mL(t_1)} \bP\left[ t_4 \neq t_3 - t_1 - t_2 | t_1 = m \right] \le 2 \cdot \exp\left( - \frac{Q \epsilon^2 n^2}{32N} \right)$$
Now by the conditioning property in Fact \ref{tvfacts}, we have
\begin{align*}
\TV\left( \mL(t_1 + t_2 + t_4), \mL(t_3 | t_1 + t_2 + t_4 = t_3) \right) &\le \bP\left[ t_1 + t_2 + t_4 \neq t_3 \right] \\
\TV\left( \mL(t_3), \mL(t_3 | t_1 + t_2 + t_4 = t_3) \right) &\le \bP\left[ t_1 + t_2 + t_4 \neq t_3 \right]
\end{align*}
Since $t_3 \sim \text{Bin}(N, Q)$, the triangle inequality in Fact \ref{tvfacts} implies that
$$\TV\left( \mL(t_1 + t_2 + t_4), \text{Bin}(N, Q) \right) \le 2 \cdot \bP\left[ t_1 + t_2 + t_4 \neq t_3 \right] \le 4 \cdot \exp\left( - \frac{Q \epsilon^2 n^2}{32N} \right)$$
which proves the second inequality in the lemma. Now observe that by the conditioning property in Fact \ref{tvfacts}, we have that
\begin{align*}
\TV\left( \mL(t_2 + t_4 | t_1 = m), \mL(t_2 + t_4 | t_1 = m, t_1 + t_2 + t_4 = t_3) \right) &\le \bP\left[ t_4 \neq t_3 - t_1 - t_2 | t_1 = m  \right] \\
\TV\left( \mL(t_3), \mL(m + t_2 + t_4 | t_1 = m, t_1 + t_2 + t_4 = t_3) \right) &\le \bP\left[ t_4 \neq t_3 - t_1 - t_2 | t_1 = m  \right]
\end{align*}
Note that $t_3 \sim \text{Bin}(N, Q)$ is independent of $t_1$ and thus $\mL(t_3) = \mL(t_3 | t_1 = m)$. Applying the inequality derived above using Lemma \ref{lem:hidingentries} with $M = N$, $K = k$ and $K' = m$ yields that
\begin{align*}
\TV\left( \mL(t_3), m + \text{Bin}(N - k, Q) \right) &\le \sqrt{\frac{m^2 (1 - Q)}{2(N - k + m) Q}} + \sqrt{\frac{(k - m)^2 Q}{2N (1 - Q)}} \\
&\le \sqrt{\frac{k^2(1 - Q)}{2NQ}} + \sqrt{\frac{k^2Q}{2N(1 - Q)}}
\end{align*}
as long as $k^2/N \le \min \left\{ Q^{-1} (1 - Q), Q(1 - Q)^{-1} \right\}$. Applying the triangle inequality twice now yields that
\begin{align*}
&\TV\left( \mL(t_2 + t_4 | t_1 = m), \text{Bin}(N - k, Q) \right) \\
&\quad \quad = \TV\left( \mL(m + t_2 + t_4 | t_1 = m), m + \text{Bin}(N - k, Q) \right) \\
&\quad \quad \le 2 \cdot \bP\left[ t_4 \neq t_3 - t_1 - t_2 | t_1 = m \right] + \sqrt{\frac{k^2(1 - Q)}{2NQ}} + \sqrt{\frac{k^2Q}{2N(1 - Q)}} \\
&\quad \quad \le 4 \cdot \exp\left( - \frac{Q \epsilon^2 n^2}{32N} \right) + \sqrt{\frac{k^2(1 - Q)}{2NQ}} + \sqrt{\frac{k^2Q}{2N(1 - Q)}}
\end{align*}
Now note that by the conditioning on a random variable property in Fact \ref{tvfacts}, we have that
$$\TV\left( \mL(t_1, t_2 + t_4), \, \textnormal{Bin}(k, P) \otimes \textnormal{Bin}(N - k, Q) \right) \le \bE_{m \sim \textnormal{Bin}(k, P)} \TV\left( \mL(t_2 + t_4 | t_1 = m), \text{Bin}(N - k, Q) \right)$$
Combining this with the inequality derived above completes the proof of the lemma.
\end{proof}

\subsection{Proof of Theorem \ref{thm:tosubmatrix}}
\label{subsec:proofofthm}

We now combine the lemmas in the previous two sections and Lemma \ref{lem:mrk} to prove Theorem \ref{thm:tosubmatrix}. First consider the case where $G \sim \mG(n, R, p, q)$ where $R \subseteq [n]$ satisfies $|R| = k$. In the first part of the proof of this proposition, let $M^1 = \mathcal{A}_{\text{1-2}}(G)$ be the matrix $M^1$ after Steps 1 and 2 in $\mathcal{A}$. First observe by AM-GM that
$$\sqrt{pq} \le \frac{p + q}{2} = 1 - \frac{(1 - p) + (1 - q)}{2} \le 1 - \sqrt{(1 - p)(1 - q)}$$
Let $Q = 1 - \sqrt{(1 - p)(1 - q)} + \mathbf{1}_{\{p = 1\}} \left( \sqrt{q} - 1 \right)$. If $p \neq 1$, then it follows that $P = p > Q$, $\frac{1 - p}{1 - q} = \left( \frac{1 - P}{1 - Q} \right)^2$ and the inequality above rearranges to $\left( \frac{P}{Q} \right)^2 \le \frac{p}{q}$. If $p = 1$, then  $Q = \sqrt{q}$, the inequality $\frac{1 - p}{1 - q} \le \left( \frac{1 - P}{1 - Q} \right)^2$ holds trivially and $\left( \frac{P}{Q} \right)^2 = \frac{p}{q}$. Applying Lemma \ref{lem:cloning} with $t = 2$ therefore yields that $(G_1, G_2) \sim \mG(n, R, p, Q)^{\otimes 2}$. Let $U = \pi^{-1}(R)$ be the subset of $[N]$ that the dense subgraph vertices are mapped to in Step 2, on choosing $S$ and $\pi$. Let $R' \subseteq [N]$ be a fixed subset with $|R'| = k$. Observe that the matrix $M^1$ in Step 2 conditioned on $U = R'$ has independent off-diagonal entries satisfying $M^1_{ij} \sim \text{Bern}(p)$ if $i, j \in R'$ and $M^1_{ij} \sim \text{Bern}(Q)$ otherwise, matching the off-diagonal distribution of $\mathcal{M}_{N}\left( \text{Bern}(p), \text{Bern}\left(Q \right), R' \right)$. Furthermore these entries are independent of the diagonal entries of $M^1$. Thus the tensorization property in Fact \ref{tvfacts} implies that
\begin{align*}
&\TV\left( \mL(M^1 | U = R'), \mathcal{M}_{N}\left( \text{Bern}(p), \text{Bern}\left(Q \right), R' \right) \right) \\
&\quad \quad = \TV\left( \mL(\text{diag}(M^1) | U = R'), \mathcal{V}_{N}\left( \text{Bern}(p), \text{Bern}\left(Q \right), R' \right)\right)
\end{align*}
Fix some subset $S' \subseteq [N]$ with $|S'| = n$. Now observe that conditioned on $S = S'$, the entries $M^1_{ii}$ with $i \in S'$ are i.i.d. distributed as $\text{Bern}(p)$ since the number of $i \in S'$ with $M^1_{ii} = 1$ is $s_1 \sim \text{Bin}(n, p)$ and $(M^1_{ii} : i \in S')$ is exchangeable. Therefore, conditioned on $U = R'$ and $S = S'$, the entries of $\text{diag}(M^1)$ are distributed as $(M^1_{ii} : i \in S') \sim \text{Bern}(p)^{\otimes n}$ and $(M^1_{ii} : i \not \in S')$ is exchangeable with support of size $|T_2| = \max\{ s_2 - s_1, 0\}$ where $s_1$ is the size of the support of $(M^1_{ii} : i \in S')$ and $s_2 \sim \text{Bin}(N, Q)$ is sampled independently. Since $S$ is chosen uniformly at random, conditioned on $U = R'$, the elements of $S \backslash R'$ are a uniformly at random chosen subset of $[N]\backslash R'$ of size $n - k$. Thus relaxing the conditioning to only $U = R'$ yields that the entries of $\text{diag}(M^1)$ are distributed as $(M^1_{ii} : i \in R') \sim \text{Bern}(p)^{\otimes k}$ and $(M^1_{ii} : i \not \in R')$ is exchangeable with support of size $t_2 + |T_2| = t_2 + \max\{ s_2 - t_1 - t_2, 0\}$ where $t_1$ is the size of the support of $(M^1_{ii} : i \not \in R')$ and $t_2 \sim \text{Bin}(n - k, p)$ is sampled independently.

Note that the distributions of $\mL(\text{diag}(M^1) | U = R')$ and $\mathcal{V}_{N}\left( \text{Bern}(p), \text{Bern}\left(Q \right), R' \right)$ restricted to the indices in $R'$ and $[N] \backslash R'$ are each exchangeable. Therefore conditioning on the pair of support sizes within $R'$ and $[N]\backslash R'$ and applying the conditioning property in Fact \ref{tvfacts} yields that
\begin{align*}
&\TV\left( \mL(\text{diag}(M^1) | U = R'), \mathcal{V}_{N}\left( \text{Bern}(p), \text{Bern}\left(Q \right), R' \right)\right) \\
&\quad \quad = \TV\left( \mL(t_1,  t_2 + \max\{ s_2 - t_1 - t_2, 0\}), \text{Bin}(k, p) \otimes \text{Bin}\left(N - k, Q\right) \right) \\
&\quad \quad \le 4 \cdot \exp\left( - \frac{Q \epsilon^2 n^2}{32N} \right) + \sqrt{\frac{k^2(1 - Q)}{2QN}} + \sqrt{\frac{k^2Q}{2N(1 - Q)}}
\end{align*}
by Lemma \ref{lem:plantingdiagonals}. Applying the conditioning property in Fact \ref{tvfacts} to conditioning on $R$ and $U = R'$ now yields that
\begin{align*}
&\TV\left( \mL(\mathcal{A}_{\text{1-2}}(\mG(n, k, p, q))), \mathcal{M}_{N}\left( \text{Bern}(p), \text{Bern}\left(Q \right), k \right) \right) \\
&\quad \quad \le \bE_{R \sim \mathcal{U}_{k, N}}\bE_U \left[ \TV\left( \mL(\text{diag}(M^1) | U), \mathcal{V}_{N}\left( \text{Bern}(p), \text{Bern}\left(Q \right), U \right)\right) \right] \\
&\quad \quad \le 4 \cdot \exp\left( - \frac{Q \epsilon^2 n^2}{32N} \right) + \sqrt{\frac{k^2(1 - Q)}{2QN}} + \sqrt{\frac{k^2Q}{2N(1 - Q)}}
\end{align*}
where $\mathcal{U}_{k, N}$ is the uniform distribution on the $k$-subsets of $[N]$. Now let $\mathcal{A}_3$ denote Step 3 of $\mathcal{A}$ with input $M^1$ and output $M^2$. Let $M^1 \sim \mathcal{M}_{N}\left( \text{Bern}(p), \text{Bern}\left(Q \right), R' \right)$ and $M^2 = \mathcal{A}_3(M^1)$. Consider also conditioning on the permutation $\tau = \tau'$ where $\tau'$ is a fixed permutation of $[N\ell]$. Let $U_s = \tau'(\{s\ell + 1, s\ell + 2, \dots, (s + 1)\ell \})$ for each $0 \le s < N$ and note that $\textsc{mrk}_{st} = \textsc{mrk}_{U_s, U_t}$. Now applying Lemma \ref{lem:mrk} to the rejection kernels $\textsc{mrk}_{st}$ yields that
\begin{align*}
\TV\left(\textsc{mrk}_{st}(\text{Bern}(p)), \, \Motimes_{(i, j) \in U_s \times U_t} \mP_{ij}^n \right) &\le \Delta \quad \text{and} \\
\TV\left(\textsc{mrk}_{st}\left(\text{Bern}\left( Q \right) \right), \, \Motimes_{(i, j) \in U_s \times U_t} \mQ_{ij}^n \right) &\le \Delta
\end{align*}
Now let $V = \bigcup_{i \in R'} U_{i-1}$ be the set of indices of $[N\ell]$ that $R'$ is mapped to and let $M'$ be sampled as $M' \sim \mathcal{M}_{N\ell}( (\mP_{ij}, \mQ_{ij})_{1 \le i, j \le N\ell}, V)$. The tensorization property in Fact \ref{tvfacts} now yields that
\begin{align*}
&\TV\left( \mL(M^2 | \tau = \tau'), \, \mathcal{M}_{N\ell}( (\mP_{ij}, \mQ_{ij})_{1 \le i, j \le N\ell}, V) \right) \\
&\quad \quad \le \sum_{i, j = 1}^N \TV\left( \mL(M^2_{ab} : a \in U_{i-1}, b \in U_{j-1}), \mL( M'_{ab} : a \in U_{i-1}, b \in U_{j-1}) \right) \\
&\quad \quad = \sum_{(i, j) \in R'^2} \TV\left(\textsc{rk}_{(i-1)(j-1)}(\text{Bern}(p)), \, \Motimes_{(a, b) \in U_{i-1} \times U_{j-1}} \mP_{ab}^n \right) \\
&\quad \quad \quad \quad + \sum_{(i, j) \not \in R'^2} \TV\left(\textsc{rk}_{(i-1)(j-1)}\left(\text{Bern}\left( Q \right) \right), \, \Motimes_{(a, b) \in U_{i-1} \times U_{j-1}} \mQ_{ab}^n \right) \\
&\quad \quad \le N^2 \cdot \Delta
\end{align*}
Now note that when $\tau = \tau'$ is chosen uniformly at random, the set $V$ is a uniformly at random chosen $k\ell$-subset of $[N\ell]$. Applying the conditioning property in Fact \ref{tvfacts} to conditioning on $R'$ and $\tau = \tau'$ now yields that
$$\TV\left( \mathcal{A}_3\left(\mathcal{M}_{N}\left( \text{Bern}(p), \text{Bern}\left(Q \right), k \right)\right), \, \mathcal{M}_{N\ell}( (\mP_{ij}, \mQ_{ij})_{1 \le i, j \le N\ell}, k\ell) \right) \le N^2 \cdot \Delta$$
Applying Lemma \ref{lem:tvacc} to the steps $\mathcal{A}_{\text{1-2}}$ and $\mathcal{A}_3$ with the sequence of distributions $\mathcal{P}_0 = \mG(n, k, p, q)$, $\mathcal{P}_{\text{1-2}} = \mathcal{M}_{N}\left( \text{Bern}(p), \text{Bern}\left(Q \right), k \right)$ and $\mathcal{P}_3 = \mathcal{M}_{N\ell}( (\mP_{ij}, \mQ_{ij})_{1 \le i, j \le N\ell}, k\ell)$ yields that
\begin{align*}
\TV\left( \mathcal{A}(\mG(n, k, p, q)), \, \mathcal{M}_{N\ell}( (\mP_{ij}, \mQ_{ij})_{1 \le i, j \le N\ell}, k\ell) \right) &\le N^2 \cdot \Delta + 4 \cdot \exp\left( - \frac{Q \epsilon^2 n^2}{32N} \right) \\
&\quad \quad + \sqrt{\frac{k^2(1 - Q)}{2QN}} + \sqrt{\frac{k^2Q}{2N(1 - Q)}}
\end{align*}

We now follow an analogous and simpler argument to analyze the case $G \sim \mG(n, q)$. Let $M^1 = \mathcal{A}_{\text{1-2}}(G)$ and note that $(G_1, G_2) \sim \mG(n, Q)^{\otimes 2}$ by Lemma \ref{lem:cloning}. Therefore the entries of $M^1$ are distributed as $M^1_{ij} \sim_{\text{i.i.d.}} \text{Bern}(Q)$ for all $i \neq j$ independently of $\text{diag}(M^1)$, which is an exchangeable distribution on $\{0, 1\}^N$ with support size $s_1 + \max\{s_2 - s_1, 0\}$ where $s_1 \sim \text{Bin}(n, p)$ and $s_2 \sim \text{Bin}(N, Q)$. Applying the tensorization and conditioning properties in Fact \ref{tvfacts} as in the previous case yields that
\begin{align*}
\TV\left( \mL(\mathcal{A}_{\text{1-2}}(\mG(n, q))), \text{Bern}\left(Q \right)^{\otimes N \times N} \right) &= \TV\left( \mL\left( \text{diag}(M^1) \right), \text{Bern}\left(Q \right)^{\otimes N} \right) \\
&= \TV\left( \mL( s_1 + \max\{s_2 - s_1, 0\} ), \text{Bin}\left(N, Q \right) \right) \\
&\le 4 \cdot \exp\left( - \frac{Q \epsilon^2 n^2}{32N} \right)
\end{align*}
by Lemma \ref{lem:plantingdiagonals}. Conditioning on $\tau = \tau'$ and applying the tensorization property in Fact \ref{tvfacts} yields
\begin{align*}
&\TV\left( \mL\left(\mathcal{A}_3\left( \text{Bern}\left(Q \right)^{\otimes N \times N} \right) \Big| \tau = \tau' \right), \, \mathcal{M}_{N\ell} \left((\mQ_{ij})_{1 \le i, j \le N\ell} \right) \right) \\
&\quad \quad \le \sum_{i, j = 1}^N \TV\left( \textsc{mrk}_{(i-1)(j-1)}\left( \text{Bern}\left( Q \right) \right), \, \Motimes_{(a, b) \in U_{i-1} \times U_{j-1}} \mQ_{ab}^n \right) \le N^2 \cdot \Delta
\end{align*}
Applying the conditioning property in Fact \ref{tvfacts} to conditioning on $\tau = \tau'$ now yields that
$$\TV\left( \mL\left(\mathcal{A}_3\left( \text{Bern}\left(Q \right)^{\otimes N \times N} \right)\right), \, \mathcal{M}_{N\ell} \left((\mQ_{ij})_{1 \le i, j \le N\ell} \right) \right) \le N^2 \cdot \Delta$$
Applying Lemma \ref{lem:tvacc} to $\mathcal{A}_{\text{1-2}}$ and $\mathcal{A}_3$ with distributions $\mathcal{P}_0 = \mG(n, q)$, $\mathcal{P}_{\text{1-2}} = \text{Bern}\left(Q \right)^{\otimes N \times N}$ and $\mathcal{P}_3 = \mathcal{M}_{N\ell} \left((\mQ_{ij})_{1 \le i, j \le N\ell} \right)$ yields that
$$\TV\left( \mathcal{A}(\mG(n, q)), \, \mathcal{M}_{N\ell} \left((\mQ_{ij})_{1 \le i, j \le N\ell} \right) \right) \le N^2 \cdot \Delta + 4 \cdot \exp\left( - \frac{Q \epsilon^2 n^2}{32N} \right)$$
which completes the proof of the theorem.

\section{Computational Barriers in Submatrix Detection}
\label{sec:compbarriers}

\subsection{Computational Lower Bounds from Our Average-Case Reduction}

The average-case reduction from planted dense subgraph in the previous section implies lower bounds for a more general heteroskedastic version of submatrix detection where the pairs of planted and noise distributions are allowed to vary from entry to entry, that we now formally define in the notation from the previous section. 

\begin{definition}[Heteroskedastic Symmetric Index Set Submatrix Detection]
Given computable pairs $(\mP_{ij}, \mQ_{ij})$ for $1 \le i, j \le n$ over a common measurable space $(X, \mathcal{B})$, define $\textsc{hssd}(n, k, (\mP_{ij}, \mQ_{ij})_{1 \le i, j \le n})$ to have observation $M \in X^{n \times n}$ and hypotheses
$$H_0 : M \sim \mathcal{M}_n\left( (\mQ_{ij})_{1 \le i, j \le n} \right) \quad \textnormal{and} \quad H_0 : M \sim \mathcal{M}_n\left( (\mP_{ij}, \mQ_{ij})_{1 \le i, j \le n}, k \right)$$
\end{definition}

The reduction $\textsc{To-Submatrix}$ from the previous section yields the following lower bounds for $\textsc{hssd}$ based on the $\textsc{pds}$ conjecture. We state the implied lower bounds when $k = \Omega(\sqrt{n})$ and $k = o(\sqrt{n})$ separately in the next two theorems.

\begin{theorem}[Heteroskedastic $\textsc{pds}$ Lower Bounds when $k = o(\sqrt{n})$] \label{thm:lowsparsehssd}
Let $k = o(\sqrt{n})$, let $0 < q < p \le 1$ be fixed constants and let $(\mP_{ij}, \mQ_{ij})$ be a computable pairs over $(X, \mathcal{B})$ for each $1 \le i, j \le n$ such that
$$\sup_{1 \le i, j \le n} \bP_{X \sim \mD_{ij}}\left[ \log \frac{d\mP_{ij}}{d\mQ_{ij}}(X) \not \in \left[ \log \left( \frac{1 - p}{1 - q} \right), \log \left( \frac{p}{q} \right) \right] \right] = o\left(n^{-2} \right)$$
under both the settings $\mD_{ij} = \mP_{ij}$ and $\mD_{ij} = \mQ$ for each $i, j \in [n]$. Then assuming the $\pr{pds}$ conjecture at densities $0 < q < p \le 1$, there is no randomized polynomial time algorithm solving $\textsc{hssd}(n, k, (\mP_{ij}, \mQ_{ij})_{1 \le i, j \le n})$ with asymptotic Type I$+$II error less than one.
\end{theorem}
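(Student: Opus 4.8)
The plan is to deduce this from the reduction $\textsc{To-Submatrix}$ of Theorem~\ref{thm:tosubmatrix}, instantiated with expansion factor $\ell = 1$ (in the sparse regime $k = o(\sqrt n)$ the relevant $\KL$ divergences may be constant, so no dimension blow-up is needed), together with the reduction principle of Lemma~\ref{lem:3a}. To produce an instance of $\textsc{hssd}(n, k, (\mP_{ij},\mQ_{ij}))$ I would run $\textsc{To-Submatrix}$ on an instance of planted dense subgraph $\textsc{pds}(n_0, k, p', q')$ with $\ell = 1$, intermediate dimension $N = n$, and $n_0 = \Theta(n)$ chosen so that $N \ge (p'/Q' + \epsilon)n_0$ for a small constant $\epsilon > 0$, where $Q' = 1 - \sqrt{(1-p')(1-q')}$ (and $Q' = \sqrt{q'}$ if $p' = 1$), using $N_{\textnormal{it}} = n^{3}$ iterations in the rejection kernels. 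Since $q'$ is a constant the requirement $q' = n^{-O(1)}$ is trivial, and since $k = o(\sqrt n) = o(n_0)$ the side conditions $k \le Q'\epsilon n_0 / 2$ and $k^2/N \le \min\{Q'/(1-Q'), (1-Q')/Q'\}$ of Theorem~\ref{thm:tosubmatrix} hold for all large $n$. The crucial point is the choice of the input densities: after the cloning step of $\textsc{To-Submatrix}$ the rejection kernels see the Bernoulli pair $(p', Q')$, so the relevant log-likelihood-ratio window is $[\log\tfrac{1-p'}{1-Q'}, \log\tfrac{p'}{Q'}]$, and $(p', q')$ must be chosen — in particular one may take $p' = 1$, which makes the left endpoint $-\infty$, and $q'$ a small enough constant — so that this window contains the window $[\log\tfrac{1-p}{1-q}, \log\tfrac{p}{q}]$ appearing in the hypothesis. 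The $\textsc{pds}$ conjecture at the constant densities $(p', q')$ then follows from the assumed $\textsc{pds}$ conjecture.

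Next I would bound the total variation distances in Theorem~\ref{thm:tosubmatrix}. The terms $4\exp(-Q'\epsilon^2 n_0^2/(32N))$, $\sqrt{k^2(1-Q')/(2Q'N)}$ and $\sqrt{k^2 Q'/(2N(1-Q'))}$ are all $o(1)$: the first because $n_0^2/N = \Theta(n) \to \infty$, the latter two because $k^2/N = k^2/n = o(1)$ since $k = o(\sqrt n)$. It remains to show $N^2\Delta = n^2\Delta = o(1)$, where $\Delta = \max_{U,V}\Delta_{U,V}$ and, with $\ell = 1$, each $\Delta_{ij}$ is the bound of Lemma~\ref{lem:mrk} for the univariate rejection kernel sending $(\textnormal{Bern}(p'),\textnormal{Bern}(Q'))$ to $(\mP_{ij}^n,\mQ_{ij}^n)$. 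Its first term is $\frac{\bP_{\mQ_{ij}}[x\notin S] + \bP_{\mP_{ij}}[x\notin S]}{p'-Q'}$ with $S = \{x : \log\tfrac{1-p'}{1-Q'} \le \log\tfrac{d\mP_{ij}}{d\mQ_{ij}}(x) \le \log\tfrac{p'}{Q'}\}$; by the window containment from the first paragraph $S \supseteq \{x : \log\tfrac{1-p}{1-q}\le\log\tfrac{d\mP_{ij}}{d\mQ_{ij}}(x)\le\log\tfrac pq\}$, so the uniform hypothesis $\sup_{i,j}\bP_{X\sim\mD_{ij}}[\log\tfrac{d\mP_{ij}}{d\mQ_{ij}}(X)\notin[\log\tfrac{1-p}{1-q},\log\tfrac pq]] = o(n^{-2})$ yields $\sup_{i,j}\max\{\bP_{\mP_{ij}}[x\notin S],\bP_{\mQ_{ij}}[x\notin S]\} = o(n^{-2})$, and as $p'-Q'$ is a positive constant this term is $o(n^{-2})$. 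The second term of $\Delta_{ij}$ is a maximum of two powers $(\textnormal{base})^{N_{\textnormal{it}}}$, where each base — namely $\bP_{\mQ_{ij}}[x\notin S] + Q'/p'$, or $\tfrac{Q'}{p'}\bP_{\mP_{ij}}[x\notin S] + \tfrac{p'-2p'Q'+(Q')^2}{p'-p'Q'}$ — is at most $1-c$ for a fixed constant $c > 0$ (since $Q'/p' < 1$ and $\bP_{\mD_{ij}}[x\notin S] = o(1)$), so with $N_{\textnormal{it}} = n^3$ it is $e^{-\Omega(n^3)} = o(n^{-2})$. Hence $\Delta = o(n^{-2})$ uniformly and $N^2\Delta = o(1)$; thus both total variation bounds of Theorem~\ref{thm:tosubmatrix} are $o(1)$, and the output distributions coincide with the $\textsc{hssd}$ null law $\mathcal{M}_n((\mQ_{ij}))$ and planted law $\mathcal{M}_n((\mP_{ij},\mQ_{ij}),k)$ (up to the index relabeling induced by $\tau$, which is immaterial since the claim is for arbitrary pairs).

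Finally I would apply Lemma~\ref{lem:3a}: a randomized polynomial time algorithm solving $\textsc{hssd}(n, k, (\mP_{ij},\mQ_{ij}))$ with asymptotic Type I$+$II error below one, composed with $\textsc{To-Submatrix}$ instantiated as above, would solve $\textsc{pds}(n_0, k, p', q')$ with asymptotic Type I$+$II error below one; since $n_0 = \Theta(n)$ and $k = o(\sqrt n) = o(\sqrt{n_0})$, this contradicts the $\textsc{pds}$ conjecture, proving the theorem. I expect the main obstacle to be the first paragraph — selecting the input densities $(p', q')$ so that the cloning step of $\textsc{To-Submatrix}$ produces a rejection-kernel window containing the hypothesis window $[\log\tfrac{1-p}{1-q},\log\tfrac pq]$ while the dimension and subgraph-size constraints of Theorem~\ref{thm:tosubmatrix} are simultaneously satisfiable with $N = n$ and $n_0 = \Theta(n)$; once the parameters are aligned, the remainder is a routine combination of Lemma~\ref{lem:mrk}, the concentration bounds internal to Theorem~\ref{thm:tosubmatrix}, and Lemma~\ref{lem:3a}.
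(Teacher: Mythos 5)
Your high-level architecture is the same as the paper's: run $\textsc{To-Submatrix}$ with $\ell = 1$, bound $N^2 \Delta$ through Lemma \ref{lem:mrk} using the hypothesis on the LLR window together with a large enough iteration count, handle the diagonal-planting and embedding error terms using $k^2 = o(n)$, and conclude with Lemma \ref{lem:3a}. The genuine gap is in your choice of starting densities. The theorem assumes the $\pr{pds}$ conjecture at the \emph{specific} constants $(p,q)$ that also appear in the log-likelihood-ratio window condition, and the paper's proof correspondingly feeds the reduction an instance of $\pr{pds}(n,k,p,q)$ at exactly those densities. You instead start from $\pr{pds}(n_0,k,p',q')$ at different constants, in particular $p' = 1$ (i.e.\ planted clique) with $q'$ small, chosen so that the post-cloning kernel window contains the hypothesis window, and you assert that ``the $\pr{pds}$ conjecture at the constant densities $(p', q')$ then follows from the assumed $\pr{pds}$ conjecture.'' That implication is unjustified and, within the paper's toolkit, points in the wrong direction: the only density-changing reduction available is the edge-erasure argument showing that the $\pr{pc}$ conjecture implies the $\pr{pds}$ conjecture, not the reverse, and there is no reduction from $\pr{pds}$ at one constant density pair to $\pr{pds}$ at another pair (let alone to $\pr{pc}$). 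So, as written, your argument proves hardness of $\textsc{hssd}$ only under the $\pr{pc}$ conjecture at density $q'$ (equivalently $\pr{pds}$ at $(p',q')$), which is a different and stronger hypothesis than the one in the statement; it does not prove the theorem as stated.

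The subtlety that pushed you to change densities is a real one, and worth flagging: after $\textsc{Graph-Clone}$ the rejection kernels send $\textnormal{Bern}(p)$ and $\textnormal{Bern}(Q)$ with $Q = 1 - \sqrt{(1-p)(1-q)} \ge q$, so the set $S$ of Lemma \ref{lem:mrk} is cut out by the window $\left[\log\frac{1-p}{1-Q}, \log\frac{p}{Q}\right]$, which is \emph{contained in} (not containing) the hypothesis window $\left[\log\frac{1-p}{1-q}, \log\frac{p}{q}\right]$; the paper's proof simply reads the stated condition as controlling $\bP[x \notin S]$ for this narrower window. But the remedy cannot be to swap in a planted clique (or other-density $\pr{pds}$) instance, since that changes the hardness assumption and hence the theorem being proved; the fix consistent with the statement is to keep the input densities equal to $(p,q)$ and interpret (or restate) the window condition with respect to $(p,Q)$. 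The remainder of your bookkeeping --- taking $N = n$ with input size $n_0 = \Theta(n)$ so the dimension constraint $N \ge (p/Q + \epsilon) n_0$ holds, the $o(1)$ bounds on the embedding and diagonal terms from $k^2/N = o(1)$, and $N_{\textnormal{it}} = n^3$ making the geometric term in $\Delta$ negligible so that $N^2\Delta = o(1)$ --- is correct and matches the paper's calculation.
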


\begin{proof}
Consider applying Lemma \ref{lem:3a} and Theorem \ref{thm:tosubmatrix} with $\ell = 1$ and starting planted dense subgraph instance with subgraph size $k$, $n$ vertices and densities $0 < q < p \le 1$. Excluding $n^2 \cdot \Delta$, all of the terms in both of the total variation upper bounds in Theorem \ref{thm:tosubmatrix} are $o(1)$ since $k^2 = o(n)$ and $Q < P$ are constants. It suffices to show that $n^2 \cdot \Delta = o(1)$. By the definition of $\Delta$, we have that $\Delta = \max_{i, j \in [n]} \Delta_{ij}$ since $\ell = 1$. Now consider the definition of $\Delta_{ij}$ in Lemma \ref{lem:mrk}. The condition in the theorem statement above implies that $\bP_{x \sim \mQ_n^*}[x \not \in S] = o(n^{-2})$ and $\bP_{x \sim \mP_n^*}[x \not \in S] = o(n^{-2})$ and in particular that they both are at most $\frac{1}{4}(p - q)$. By the same argument as in the proof of Lemma \ref{lem:mrkupperbound}, this is sufficient to imply that $\Delta_{ij} = o(n^{-2})$. Since this holds for each $i, j \in [n]$, we have that $\Delta = o(n^{-2})$, which proves the theorem.
\end{proof}

\begin{theorem}[Heteroskedastic $\textsc{pds}$ Lower Bounds when $k = \Omega(\sqrt{n})$] \label{thm:highsparsehssd}
Let $k = \Omega(\sqrt{n})$, let $0 < q < p \le 1$ be fixed constants and let $(\mP_{ij}, \mQ_{ij})$ be a computable pairs over $(X, \mathcal{B})$ for each $1 \le i, j \le n$. Suppose that there is some $m = \omega(k^2/n)$ with $m = o(n)$ such that
$$\sup_{S, T \subseteq [n] : |S| = |T| = m} \bP_{X_{S \times T} \sim \mD_{S \times T}}\left[ \sum_{i \in S} \sum_{j \in T} \log \frac{d\mP_{ij}}{d\mQ_{ij}}(X_{ij}) \not \in \left[ \log \left( \frac{1 - p}{1 - q} \right), \log \left( \frac{p}{q} \right) \right] \right] = o\left( \frac{m^2}{n^2} \right)$$
under both the settings $\mD_{S \times T} = \otimes_{i \in S} \otimes_{j \in T} \mP_{ij}$ and $\mD_{S \times T} = \otimes_{i \in S} \otimes_{j \in T} \mQ_{ij}$ for each $S, T$. Then assuming the $\pr{pds}$ conjecture at densities $0 < q < p \le 1$, there is no randomized polynomial time algorithm solving $\textsc{hssd}(n, k, (\mP_{ij}, \mQ_{ij})_{1 \le i, j \le n})$ with asymptotic Type I$+$II error less than one.
\end{theorem}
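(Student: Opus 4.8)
The plan is to prove this exactly as Theorem~\ref{thm:lowsparsehssd}, by passing a planted dense subgraph instance through the reduction $\textsc{To-Submatrix}$ of Theorem~\ref{thm:tosubmatrix} and applying Lemma~\ref{lem:3a}, except that now the expansion factor $\ell$ is chosen nontrivially and calibrated to the $m$ given by the hypothesis. Specifically, I would reduce from $\pr{pds}(n_0, k_0, p, q)$ in which every Bernoulli entry is blown up into an $\ell \times \ell = m \times m$ block, so that the output dimension is $n \asymp n_0 m$ and the planted submatrix has size $k = k_0 m$. Thus one takes $\ell = m$, $N = \lfloor n/m \rfloor$, $n_0 \asymp N/(p/Q + 2\epsilon)$ for a fixed small constant $\epsilon > 0$, $k_0 = \lceil k/m \rceil$, $N_{\textnormal{it}} = \textnormal{poly}(n)$, and $Q$ as in Theorem~\ref{thm:tosubmatrix}; the usual rounding, divisibility, and padding adjustments, such as padding the reduction's output with i.i.d.\ $\mQ_{ij}$ rows and columns and randomly permuting to reach a uniform planted set of the right size, perturb the parameters only by constant factors and are suppressed.

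Next I would verify the preconditions of Theorem~\ref{thm:tosubmatrix}. The hypotheses $m = \omega(k^2/n)$ and $m = o(n)$ force $k = o(n)$ and $n_0 \asymp N = n/m \to \infty$. Then $q = n^{-O(1)}$ is immediate since $q$ is constant; $N \ge (p/Q + \epsilon)n_0$ holds by construction; $k_0/n_0 \asymp k/n = o(1)$ yields $k_0 \le Q\epsilon n_0/2$ for large $n$; and $k_0^2/N \asymp k^2/(mn) = o(1)$ by $m = \omega(k^2/n)$, comfortably below $\min\{Q/(1-Q), (1-Q)/Q\}$. The same estimate gives $k_0^2/n_0 \asymp k^2/(mn) = o(1)$, so $k_0 = o(\sqrt{n_0})$ and the $\pr{pds}$ conjecture at densities $p, q$ applies to $\pr{pds}(n_0, k_0, p, q)$.

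The heart of the matter is showing that the total variation error $N^2 \cdot \Delta$ in Theorem~\ref{thm:tosubmatrix} is $o(1)$. Each rejection kernel $\textsc{mrk}_{U, V}$ acts on an $m \times m$ block and lifts $\textnormal{Bern}(p), \textnormal{Bern}(Q)$ to the $m^2$ pairs $(\mP_{ij}, \mQ_{ij})$ with $i \in U$, $j \in V$; the set $S$ in Lemma~\ref{lem:mrk} is exactly the set of block configurations whose total log-likelihood ratio $\sum_{i \in U}\sum_{j \in V}\log\frac{d\mP_{ij}}{d\mQ_{ij}}$ lies in $[\log\frac{1-p}{1-q}, \log\frac{p}{q}]$. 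Since $|U| = |V| = m$, the displayed hypothesis, applied with $\mD_{U\times V} = \otimes_{i\in U}\otimes_{j\in V} \mP_{ij}$ and with $\otimes_{i\in U}\otimes_{j\in V} \mQ_{ij}$, gives $\bP_{x \sim \mQ^*}[x \notin S]$ and $\bP_{x \sim \mP^*}[x \notin S]$ both $o(m^2/n^2)$, uniformly over blocks; as $N = n/m$, this is $o(1/N^2)$, and in particular both are eventually below $\tfrac14(p-q)$. Running the estimate from the proof of Lemma~\ref{lem:mrkupperbound} then bounds the geometric-in-$N_{\textnormal{it}}$ contribution to $\Delta$ by $(1 - \tfrac{q(p-q)}{2p})^{N_{\textnormal{it}}} = n^{-\omega(1)}$, so $\Delta = o(1/N^2)$ and $N^2 \cdot \Delta = o(1)$. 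The remaining error terms are routine: $\exp(-Q\epsilon^2 n_0^2/(32N)) = \exp(-\Omega(n_0)) = o(1)$ since $n_0 \asymp N \to \infty$, while $\sqrt{k_0^2(1-Q)/(2QN)} + \sqrt{k_0^2 Q/(2N(1-Q))} = O(k_0/\sqrt{N}) = O(\sqrt{k^2/(mn)}) = o(1)$ again by $m = \omega(k^2/n)$.

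Combining these, $\textsc{To-Submatrix}$ maps $\mG(n_0, k_0, p, q)$ and $\mG(n_0, q)$ to within total variation $o(1)$ of the two hypotheses of $\textsc{hssd}(n, k, (\mP_{ij}, \mQ_{ij})_{1 \le i, j \le n})$, so by Lemma~\ref{lem:3a} a randomized polynomial time algorithm solving $\textsc{hssd}$ with asymptotic Type I$+$II error below one would yield one for $\pr{pds}(n_0, k_0, p, q)$ with error bounded away from one, contradicting the $\pr{pds}$ conjecture since $k_0 = o(\sqrt{n_0})$. The one genuinely delicate step is the choice $\ell = m$: it is forced from one side by requiring the per-block rejection error to be $o(1/N^2) = o(m^2/n^2)$, so that the hypothesis applies at block size exactly $m$, and from the other by needing $k_0 = k/m = o(\sqrt{n/m}) = o(\sqrt{n_0})$, i.e.\ $m = \omega(k^2/n)$; both hold precisely under the theorem's assumptions, and everything else is bookkeeping of the error terms in Theorem~\ref{thm:tosubmatrix}.
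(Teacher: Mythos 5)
Your proposal is correct and follows essentially the same route as the paper's proof: apply Lemma \ref{lem:3a} and Theorem \ref{thm:tosubmatrix} with expansion factor $\ell = m$ to a planted dense subgraph instance of size roughly $n/m$ with subgraph size roughly $k/m$ (which is $o(\sqrt{n/m})$ precisely because $m = \omega(k^2/n)$), and use the uniform block-wise tail hypothesis to conclude $\Delta = o(m^2/n^2)$ so that $N^2 \cdot \Delta = o(1)$, with the remaining error terms vanishing since $k^2/(mn) = o(1)$. Your additional bookkeeping (the constant-factor gap between the PDS instance size and $N$, the explicit choice of $N_{\textnormal{it}}$, and the divisibility/padding adjustments) only makes explicit what the paper handles in its closing remark.
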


\begin{proof}
Consider applying Lemma \ref{lem:3a} and Theorem \ref{thm:tosubmatrix} with $\ell = m$ and starting planted dense subgraph instance with subgraph size $\lfloor k/m \rfloor$, $\lfloor n/m \rfloor$ vertices and densities $0 < q < p \le 1$. Since $m = \omega(k^2/n)$, it follows that $k/m = o(\sqrt{n/m})$ and thus it suffices to show that the total variation upper bounds in Theorem \ref{thm:tosubmatrix} are $o(1)$. As in the proof of the previous theorem, this reduces to showing that $(n/m)^2 \cdot \Delta = o(1)$. For each pair $S, T \subseteq [n]$ with $|S| = |T| = m$, consider the definition of $\Delta_{S, T}$ in Lemma \ref{lem:mrk}. We have that $\bP_{x \sim \mQ_n^*}[x \not \in S] = o(n^{-2})$ and $\bP_{x \sim \mP_n^*}[x \not \in S] = o(m^2/n^2)$ by the guarantees in the theorem statement. Since $m = o(n)$, these probabilities are at most $\frac{1}{4}(p - q)$ for large $n$. By the same reasoning as in the previous theorem, we have that $\Delta = o(m^2/n^2)$, proving the theorem.
\end{proof}

We remark that we ignored issues of divisibility in the previous theorem statement, reducing to an instance with $m \lfloor n/m \rfloor$ vertices and submatrix size $m \lfloor k/m \rfloor$ instead of exactly $n$ and $m$. This can be resolved by taking all of $n, k, m, \ell$ to be powers of two without affecting their sizes by more than a factor of $2$. Constructing a sequence of indices with these parameters is enough to rule out polynomial time algorithms given our forms of the $\pr{pc}$ and $\pr{pds}$ conjectures.

From this point forward, we will restrict our attention to the homoskedastic formulation of submatrix detection that we have so far focused on. However, we first remark that these general heteroskedastic lower bounds can be specialized to imply hardness for submatrix problem with dependences between entries induced by natural column-wise and row-wise mixtures. Let $\mP(\theta)$ be a family of distributions indexed by $\theta$ such that $(\mP(\theta), \mQ)$ is a computable pair for each $\theta \in \Theta$. Now consider the submatrix problem with $H_0 : M \sim \mQ^{\otimes n \times n}$ and $H_1$ distribution formed as follows:
\begin{itemize}
\item Select a subset $S \subseteq [n]$ with $|S| = k$ uniformly at random
\item Sample $\theta_i \sim_{\text{i.i.d.}} \mD$ for each $i \in [n]$
\item Sample $M_{ij} \sim \mP(\theta_i)$ for each $i, j \in S$ and $M_{ij} \sim \mQ$ otherwise independently
\end{itemize}
for some distribution $\mD$ on $\Theta$. For example, if $\mP(\theta) = \mN(\theta, 1)$, $\mQ = \mN(0, 1)$ and $\mD$ is normally distributed, this model has row-wise dependences resembling sparse PCA. Now suppose that an algorithm $\mathcal{A}$ solve this problem with asymptotic Type I$+$II error $\epsilon$, then there must be a deterministic choice of the $\theta_i$ such that $\mathcal{A}$ solves the problem with asymptotic Type I$+$II error $\epsilon$. When $\theta_i$ are deterministic, this problem is exactly $\textsc{hssd}(n, k, (\mP_{ij}, \mQ_{ij})_{1 \le i, j \le n})$ with $\mP_{ij} = \mP(\theta_i)$ and $\mQ_{ij} = \mQ$. 

We now combine the heteroskedastic lower bounds in Theorems \ref{thm:lowsparsehssd} and \ref{thm:highsparsehssd} with the $\textsc{mrk}$ upper bound on $\Delta$ given in Lemma \ref{lem:mrkupperbound} to yield clean statements of the implied computational lower bounds for $\pr{ssd}$ based on the $\textsc{pds}$ and $\textsc{pc}$ conjectures.

\begin{corollary}[$\textsc{pds}$ Lower Bounds for Submatrix Detection] \label{cor:pdslowerbounds}
Let $0 < q < p \le 1$ be fixed constants and $(\mP, \mQ)$ be a computable pair over $(X, \mathcal{B})$ such that either:
\begin{itemize}
\item $k = \Omega(\sqrt{n})$ and $\frac{k^4}{n^2} \cdot \SKL(\mP, \mQ) \to 0$ and the LLR between $(\mP, \mQ)$ satisfies the LDP
$$E_\mP\left(m \right) = \omega(m \log n) \quad \textnormal{and} \quad E_\mQ\left(-m \right) = \omega(m \log n)$$
for some positive $m$ satisfying $\max \left\{ \KL(\mQ \| \mP), \KL(\mP \| \mQ) \right\} \le m = o(n^2/k^4)$ and where the second inequality is only necessary if $p \neq 1$
\item $k = o(\sqrt{n})$, $\KL(\mQ \| \mP) < \log \left( \frac{1 - q}{1 - p} \right)$ and $\KL(\mP \| \mQ) < \log \left( \frac{p}{q} \right)$ and the LLR between $(\mP, \mQ)$ satisfies the LDP
$$E_\mP\left(\log \left( \frac{p}{q} \right) \right) \ge 2 \log n + \omega(1) \quad \textnormal{and} \quad E_\mQ\left( \log \left( \frac{1 - p}{1 - q} \right) \right) \ge 2 \log n + \omega(1)$$
where the second inequality is only necessary if $p \neq 1$
\end{itemize}
Then assuming the $\pr{pds}$ conjecture at densities $0 < q < p \le 1$, there is no randomized polynomial time algorithm solving $\pr{ssd}(n, k, \mP, \mQ)$ with asymptotic Type I$+$II error less than one.
\end{corollary}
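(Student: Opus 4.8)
The plan is to observe that $\pr{ssd}(n, k, \mP, \mQ)$ is exactly the homoskedastic instance of $\textsc{hssd}(n, k, (\mP_{ij}, \mQ_{ij})_{1 \le i, j \le n})$ with $(\mP_{ij}, \mQ_{ij}) = (\mP, \mQ)$ for every $i, j$, and then to invoke Theorem \ref{thm:lowsparsehssd} when $k = o(\sqrt n)$ and Theorem \ref{thm:highsparsehssd} when $k = \Omega(\sqrt n)$. The only work is to check that the tail-probability hypotheses of those theorems follow from the large-deviation conditions stated here. I will repeatedly use three ingredients from Section \ref{sec:defns}: the Chernoff bounds $\bP_{x \sim \mP^{\otimes t}}\left[ t^{-1}\sum_i L(x_i) > \tau \right] \le e^{-t E_\mP(\tau)}$ valid for $\tau \ge \KL(\mP\|\mQ)$ and their lower-tail and $\mQ$-side analogues; the identity $E_\mQ(\tau) = E_\mP(\tau) + \tau$; and the fact that $E_\mP$ and $E_\mQ$ are convex and nonnegative and vanish at $\KL(\mP\|\mQ)$ and $-\KL(\mQ\|\mP)$ respectively, hence are monotone away from those minimizers. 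Here $L = \log\frac{d\mP}{d\mQ}$.

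For the sparse case $k = o(\sqrt n)$, Theorem \ref{thm:lowsparsehssd} asks that $\bP_{x \sim \mD}\left[ L(x) \notin \left[ \log\frac{1-p}{1-q}, \log\frac pq \right] \right] = o(n^{-2})$ for $\mD \in \{\mP, \mQ\}$. Split each such probability into the upper-tail event $\{ L > \log(p/q) \}$ and, when $p \neq 1$, the lower-tail event $\{ L < \log\frac{1-p}{1-q} \}$. Since $\KL(\mP\|\mQ) < \log(p/q)$ by hypothesis, Chernoff gives $\bP_{\mP}[L > \log(p/q)] \le e^{-E_\mP(\log(p/q))}$, which is $o(n^{-2})$ by the assumed LDP, while the $\mQ$-side upper tail follows from $E_\mQ(\log(p/q)) = E_\mP(\log(p/q)) + \log(p/q) \ge 2\log n + \omega(1)$. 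For the lower tails, $\log\frac{1-p}{1-q} < 0 \le \KL(\mP\|\mQ)$ and $\log\frac{1-p}{1-q} < -\KL(\mQ\|\mP)$, so $\bP_{\mP}[L < \log\frac{1-p}{1-q}] \le e^{-E_\mP(\log\frac{1-p}{1-q})}$ with $E_\mP(\log\frac{1-p}{1-q}) = E_\mQ(\log\frac{1-p}{1-q}) + \log\frac{1-p}{1-q} \ge 2\log n + \omega(1)$, and $\bP_{\mQ}[L < \log\frac{1-p}{1-q}] \le e^{-E_\mQ(\log\frac{1-p}{1-q})} = o(n^{-2})$ directly. All four probabilities are $o(n^{-2})$, so Theorem \ref{thm:lowsparsehssd} applies.

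For the dense case $k = \Omega(\sqrt n)$, Theorem \ref{thm:highsparsehssd} requires exhibiting a block size $m$ with $m = \omega(k^2/n)$ and $m = o(n)$ such that the $m^2$-fold sum $\sum_{i, j} L(x_{ij})$ lands in $\left[ \log\frac{1-p}{1-q}, \log\frac pq \right]$ except with probability $o(m^2/n^2)$, under both $\mP^{\otimes m^2}$ and $\mQ^{\otimes m^2}$. Let $m_0$ be the large-deviation threshold from the statement, so $\max\{\KL(\mQ\|\mP), \KL(\mP\|\mQ)\} \le m_0 = o(n^2/k^4)$ and $E_\mP(m_0), E_\mQ(-m_0) = \omega(m_0\log n)$. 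Take $m$ to be the integer part of $\min\left\{ \sqrt{\log(p/q)/m_0}, \sqrt{\log\frac{1-q}{1-p}/m_0} \right\}$, truncated so that $m \le k$ and $m = o(n)$. Since $m_0 = o(n^2/k^4)$ we get $\sqrt{1/m_0} = \omega(k^2/n)$, so $m = \omega(k^2/n)$. In the generic range where the truncation is inactive, the per-entry cutoffs satisfy $m^{-2}\log(p/q) \ge m_0 \ge \KL(\mP\|\mQ)$ and $m^{-2}\log\frac{1-p}{1-q} \le -m_0 \le -\KL(\mQ\|\mP)$, so the Chernoff bounds apply and, by monotonicity of $E_\mP$ on $[\KL(\mP\|\mQ), \infty)$ and of $E_\mQ$ on $(-\infty, -\KL(\mQ\|\mP)]$,
\begin{align*}
\bP_{x \sim \mP^{\otimes m^2}}\left[ \sum_{i, j} L(x_{ij}) > \log(p/q) \right] \;\le\; e^{-m^2 E_\mP(m^{-2}\log(p/q))} \;\le\; e^{-m^2 E_\mP(m_0)} \;=\; e^{-\omega(m^2 m_0 \log n)} \;=\; e^{-\omega(\log n)},
\end{align*}
since $m^2 m_0 = \Theta(1)$. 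The lower-cutoff estimate and both $\mQ$-side estimates are handled identically after rewriting the exponents through $E_\mQ(\tau) = E_\mP(\tau) + \tau$ and invoking $E_\mQ(-m_0) = \omega(m_0\log n)$ (when $p = 1$ the lower cutoff is $-\infty$ and those terms vanish). Each probability is $o(n^{-c})$ for every constant $c$, hence $o(m^2/n^2)$, so Theorem \ref{thm:highsparsehssd} applies; equivalently, these same Chernoff estimates feed into Lemma \ref{lem:mrkupperbound} to bound the per-block kernel error $\Delta$ directly. The delicate point, and the only real obstacle, is this choice of $m$: it must be large enough to be $\omega(k^2/n)$, which places the planted-dense-subgraph sub-instance produced by $\textsc{To-Submatrix}$ below its own $\Theta(\sqrt{\cdot})$ threshold so the $\pr{pds}$ conjecture has bite, yet small enough that $m^{-2}$ times the constant cutoffs still exceeds $m_0$, so that $\Delta = o(m^2/n^2)$ survives multiplication by the $N^2 = \Theta((n/m)^2)$ blocks in Theorem \ref{thm:tosubmatrix}. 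The hypothesis $\frac{k^4}{n^2}\SKL(\mP, \mQ) \to 0$ (equivalently $m_0 = o(n^2/k^4)$) is exactly what makes this window nonempty; in the degenerate ranges where the truncation is forced, $\SKL(\mP, \mQ)$ is so small that detection is information-theoretically impossible and there is nothing to prove.
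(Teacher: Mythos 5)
Your overall route is the paper's: specialize Theorems \ref{thm:lowsparsehssd} and \ref{thm:highsparsehssd} to the homoskedastic case and verify their tail hypotheses from the stated LDPs via Chernoff bounds (equivalently, via Lemma \ref{lem:mrkupperbound}). The sparse case $k = o(\sqrt n)$ is handled correctly and matches the paper. In the dense case, however, there is a genuine gap in how you choose the block size and propagate the LDP to the per-block cutoffs.

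Concretely, you set $m \approx \sqrt{\min\{\log(p/q), \log\tfrac{1-q}{1-p}\}/m_0}$ so that $m^2 m_0 = \Theta(1)$, and then bound $E_\mP(m^{-2}\log(p/q)) \ge E_\mP(m_0)$ by \emph{monotonicity} alone, concluding $m^2 E_\mP(m_0) = \omega(m^2 m_0 \log n) = \omega(\log n)$. This only works when the untruncated choice of $m$ is admissible. But the hypotheses permit $m_0$ (hence $\KL(\mP\|\mQ)$) to be arbitrarily small --- say $n^{-100}$ with $k = \Theta(\sqrt n)$ --- in which case $\sqrt{1/m_0} \gg n$ and you must truncate $m$ to satisfy $m = o(n)$ and $\lfloor k/m\rfloor \ge 1$. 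After truncation $m^2 m_0 = o(1)$, and the constant lower bound $E_\mP(m_0)$ gives only $m^2 E_\mP(m_0) = \omega(m^2 m_0\log n) = o(\log n)$, which is useless against the $N^2$ blocks. Your fallback --- that in this regime detection is information-theoretically impossible --- is not available: Corollary \ref{cor:itlowerbounds} is stated in terms of $\chi^2(\mP\|\mQ)$, and nothing in the hypotheses of the present corollary (no membership in $\pr{uc-c}$) lets you control $\chi^2$ by $\KL$; a pair with tiny KL divergences can have enormous $\chi^2$ divergence. The fix is the convexity argument the paper uses: since $E_\mP$ is convex, nonnegative, and vanishes at $\KL(\mP\|\mQ)$, the hypothesis $E_\mP(m_0) = \omega(m_0\log n)$ gives a \emph{slope} bound
$$\frac{E_\mP(\tau)}{\tau - \KL(\mP\|\mQ)} \;\ge\; \frac{E_\mP(m_0)}{m_0 - \KL(\mP\|\mQ)} \;=\; \omega(\log n) \qquad \text{for all } \tau \ge m_0,$$
so that $E_\mP(c_+\ell^{-2}) = \omega(\ell^{-2}\log n)$ for \emph{every} admissible block size $\ell$ with $c_+\ell^{-2} \ge m_0$ --- in particular for any $\ell$ in the nonempty window $\omega(k^2/n) \le \ell \le o(\min\{1/\sqrt{m_0}, n\})$ --- rather than only for $\ell^2 m_0 = \Theta(1)$. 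With that replacement (and its mirror for $E_\mQ$), your argument goes through.
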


\begin{proof}
We first consider the case where $k = \Omega(\sqrt{n})$. Consider the reduction in Theorem \ref{thm:highsparsehssd} with blow-up factor $\ell = \omega(k^2/n)$ where $\ell$ is chosen so that $\ell^{-2} = \omega(m)$ where $m$ is the positive constant in the statement of the corollary. Applying Lemma \ref{lem:mrkupperbound} to the $\textsc{mrk}$ with blow-up factor $\ell^2$ yields
$$\Delta \le \frac{3 \exp\left( - \ell^2 \cdot E_\mP\left(c_+ \ell^{-2}\right) \right) + 3 \exp\left( - \ell^2 \cdot E_\mQ\left( c_- \ell^{-2} \right) \right)}{p - q}$$
where $c_+ = \log \left( \frac{p}{q} \right)$ and $c_- = \log \left( \frac{1 - p}{1 - q} \right)$. Since $E_\mP(\lambda)$ is convex and minimized at $\KL(\mQ \| \mP)$, it follows that since $c_+ \ell^{-2} \ge m$ we have that
$$\frac{E_\mP\left(c_+ \ell^{-2}\right)}{c_+ \ell^{-2} - \KL(\mQ \| \mP)} \ge \frac{E_\mP\left(m \right)}{m - \KL(\mQ \| \mP)} = \omega(\log n)$$
Therefore $E_\mP\left(c_+ \ell^{-2}\right) = \omega(\ell^{-2} \log n)$. A symmetric argument shows that $E_\mQ\left( c_- \ell^{-2} \right) = \omega(\ell^{-2} \log n)$. Now it follows that $\Delta = o(n^{-2})$, which yields the first statement on applying Theorem \ref{thm:highsparsehssd}. Now consider the case where $k = o(\sqrt{n})$. Applying Lemma \ref{lem:mrkupperbound} to the $\textsc{mrk}$ with blow-up factor $1$ to the reduction in Theorem \ref{thm:lowsparsehssd} yields that
$$\Delta \le \frac{3 \exp\left( - E_\mP\left(c_+\right) \right) + 3 \exp\left( - E_\mQ\left( c_- \right) \right)}{p - q} = o(n^{-2})$$
by the given conditions. Combining this with Theorem \ref{thm:lowsparsehssd} completes the proof.
\end{proof}

Note that the constraints on $E_\mQ$ are no longer necessary if $p = 1$. The next corollary states our $\textsc{pc}$ lower bounds and is a restatement of the main theorem on computational lower bounds from Section \ref{sec:summary}.

\begin{corollary}[$\textsc{pc}$ Lower Bounds for Submatrix Detection] \label{cor:pclowerbounds}
Let $p \in (0, 1)$ be a fixed constant and $(\mP, \mQ)$ be a computable pair over $(X, \mathcal{B})$ such that either:
\begin{itemize}
\item $k = \Omega(\sqrt{n})$ and $\frac{k^4}{n^2} \cdot \KL(\mP \| \mQ) \to 0$ and the LLR between $(\mP, \mQ)$ satisfies the LDP
$$E_\mP\left(m \right) \ge \omega(m \log n)$$
for some positive $m$ with $\KL(\mP \| \mQ) \le m = o(n^2/k^4)$
\item $k = o(\sqrt{n})$ and $\KL(\mP \| \mQ) < \log p^{-1}$ and the LLR between $(\mP, \mQ)$ satisfies the LDP
$$E_\mP\left( \log p^{-1} \right) \ge 2\log n + \omega(1)$$
\end{itemize}
Then assuming the $\pr{pc}$ conjecture at density $p$, there is no randomized polynomial time algorithm solving $\pr{ssd}(n, k, \mP, \mQ)$ with asymptotic Type I$+$II error less than one.
\end{corollary}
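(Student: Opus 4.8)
The plan is to derive Corollary~\ref{cor:pclowerbounds} from Corollary~\ref{cor:pdslowerbounds} by specializing the source problem to planted clique. Since $\pr{pc}(n,k,p) = \pr{pds}(n,k,1,p)$, the $\pr{pc}$ conjecture at density $p$ is exactly the $\pr{pds}$ conjecture with noise density $p$ and planted density $1$, so I would instantiate Corollary~\ref{cor:pdslowerbounds} with its constant ``$p$'' equal to $1$ and its ``$q$'' equal to the clique density $p$. Under this substitution $\log(p/q) = \log p^{-1}$ while $\log\bigl((1-p)/(1-q)\bigr) = -\infty$, so every hypothesis of Corollary~\ref{cor:pdslowerbounds} involving $\log\tfrac{1-p}{1-q}$ --- the constraint $\KL(\mQ\|\mP) < \log\tfrac{1-q}{1-p}$ together with the deviation bounds $E_\mQ(-m) = \omega(m\log n)$ and $E_\mQ(\log\tfrac{1-p}{1-q})\ge 2\log n + \omega(1)$ --- becomes vacuous and may be dropped, which is exactly what the ``only necessary if $p\neq1$'' clauses there record. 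In the sparse regime $k = o(\sqrt n)$ the surviving hypotheses are then $\KL(\mP\|\mQ) < \log p^{-1}$ and $E_\mP(\log p^{-1}) \ge 2\log n + \omega(1)$, coinciding verbatim with those of Corollary~\ref{cor:pclowerbounds}; invoking Lemma~\ref{lem:3a} with the $\pr{pc}$ conjecture at density $p$ then rules out polynomial-time solvers of $\pr{ssd}(n,k,\mP,\mQ)$ with asymptotic Type I$+$II error below one.

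In the dense regime $k = \Omega(\sqrt n)$ the hypotheses of Corollary~\ref{cor:pclowerbounds} are \emph{strictly weaker} than what plugging $p=1$ into Corollary~\ref{cor:pdslowerbounds} yields, since they discard $\KL(\mQ\|\mP)\le m$ and weaken $\tfrac{k^4}{n^2}\SKL(\mP,\mQ)\to0$ to $\tfrac{k^4}{n^2}\KL(\mP\|\mQ)\to0$. I would therefore not invoke Corollary~\ref{cor:pdslowerbounds} as a black box but re-run its proof --- run $\textsc{To-Submatrix}$ via Theorem~\ref{thm:tosubmatrix} and then apply Lemma~\ref{lem:mrkupperbound} --- tracking the consequence of the source planted density being $1$. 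There the reduction uses $\textsc{Graph-Clone}$ with $t=2$ clones and new noise density $Q = \sqrt p$, embeds the result as a principal minor in Step~2 to obtain a Bernoulli submatrix with entries $\textnormal{Bern}(1)$ in the planted block and $\textnormal{Bern}(\sqrt p)$ outside, and then the multivariate rejection kernels lift $\textnormal{Bern}(1)$ and $\textnormal{Bern}(\sqrt p)$ to $\mP_n^{\otimes \ell^2}$ and $\mQ_n^{\otimes \ell^2}$. Because the planted value is $1$, the acceptance window $S$ in Lemmas~\ref{lem:mrk}--\ref{lem:mrkupperbound} is $\{x : L_n(x) \le \tfrac12\log p^{-1}\}$ with \emph{no} lower threshold, so the only requirement Lemma~\ref{lem:mrkupperbound} places is $\ell^2 \KL(\mP\|\mQ) < \tfrac12\log p^{-1}$ --- nothing is demanded of $\KL(\mQ\|\mP)$ --- and only the deviation bound for $E_\mP$ at the positive threshold is needed, the $\mQ$-tail there being controlled for free via $E_\mQ(\tau) = E_\mP(\tau)+\tau \ge E_\mP(\tau)$ for $\tau \ge 0$.

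To close the dense case I would pick the blow-up $\ell = \omega(k^2/n)$ subject to $\ell^{-2} = \omega(m)$, which is possible exactly because $m = o(n^2/k^4)$; then $\KL(\mP\|\mQ)\le m$ gives $\ell^2\KL(\mP\|\mQ) = o(1) < \tfrac12\log p^{-1}$, so the rejection kernel is well-defined, and convexity of $E_\mP$ propagates $E_\mP(m) = \omega(m\log n)$ up to $E_\mP\bigl(\Theta(\ell^{-2})\bigr) = \omega(\ell^{-2}\log n)$, the threshold $\Theta(\ell^{-2})$ lying above $m \ge \KL(\mP\|\mQ)$. Lemma~\ref{lem:mrkupperbound} then gives $\Delta = o(n^{-2})$, and since $\tfrac{k^4}{n^2}\KL(\mP\|\mQ)\to0$ sends the remaining exponential and $\sqrt{\,\cdot\,}$ terms of Theorem~\ref{thm:tosubmatrix} to $o(1)$, Lemma~\ref{lem:3a} with the $\pr{pc}$ conjecture at density $p$ finishes the argument.

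The part I expect to be most delicate is the dense-regime bookkeeping: one must verify that the three competing demands on $\ell$ --- $\ell = \omega(k^2/n)$ so that the cloning and diagonal-embedding error terms of Theorem~\ref{thm:tosubmatrix} vanish, $\ell^2\KL(\mP\|\mQ)$ below the rejection-kernel threshold so that Lemma~\ref{lem:mrkupperbound} applies, and $\ell^{-2} = \omega(m)$ so that the large-deviation estimate at $m$ transfers down to the threshold --- can be met simultaneously, which turns entirely on the separation $\KL(\mP\|\mQ) \le m = o(n^2/k^4)$. One must also check, by going through the proofs of Corollary~\ref{cor:pdslowerbounds} and Lemma~\ref{lem:mrkupperbound}, that setting the source planted density to $1$ genuinely removes every occurrence of $\KL(\mQ\|\mP)$ and of $E_\mQ$; this is precisely what lets the planted-clique statement shed the symmetric-KL hypotheses that the planted-dense-subgraph statement carries.
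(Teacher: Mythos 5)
Your proposal is correct and follows the paper's (implicit) route: the paper gives no separate proof of this corollary, treating it as Corollary~\ref{cor:pdslowerbounds} specialized to planted density $1$ after the one-line remark that the constraints on $E_\mQ$ become unnecessary. Your re-running of the dense-case argument --- observing that with planted density $1$ the rejection-kernel acceptance window loses its lower threshold (post-cloning noise density $\sqrt{p}$, upper threshold $\tfrac12\log p^{-1}$), so that only $\ell^2\KL(\mP\|\mQ)$ below that threshold and the $E_\mP$ tail bound are needed, with the $\mQ$-tail handled via $E_\mQ(\tau)=E_\mP(\tau)+\tau$ --- supplies exactly the justification the paper leaves implicit for shedding the $\KL(\mQ\|\mP)\le m$ and $\SKL$ hypotheses in the $k=\Omega(\sqrt n)$ bullet.
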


We now give another corollary of Theorems \ref{thm:lowsparsehssd} and \ref{thm:highsparsehssd} yielding a slight variation of these lower bounds for submatrix detection based on the $\pr{pc}$ conjecture. This corollary implies that the lower bounds stated in Section \ref{sec:completephasediagram} hold for all $(\mP, \mQ)$ in the universality class $\pr{uc-a}$. Note that unlike the previous corollary which yielded clean lower bounds given the $\pr{pc}$ conjecture for any fixed $p$, we deduce the desired lower bounds up to a factor of $n^{\epsilon(p)}$ where $\epsilon(p)$ tends to zero with $p$. The proof of this corollary is very similar to that of Corollary \ref{cor:pdslowerbounds}, making crucial use of the convexity of $E_\mP$ and the definition of $\pr{uc-a}$.

\begin{corollary}[Computational Lower Bounds for $\pr{uc-a}$]
Suppose that $(\mP, \mQ)$ is a computable pair in $\pr{uc-a}$. Fix any $\epsilon \in (0, 1)$ and suppose that either:
\begin{itemize}
\item $k = \Omega(\sqrt{n})$ and $\KL(\mP \| \mQ) = o\left( \frac{n^{2- \epsilon}}{k^4} \right)$ or
\item $k = o(\sqrt{n})$ and $\KL(\mP \| \mQ) = o\left(n^{-\epsilon}\right)$
\end{itemize}
Then there is a sufficiently small $p = p(\epsilon) > 0$ such that assuming the $\pr{pc}$ conjecture at density $p$, there is no randomized polynomial time algorithm solving $\pr{ssd}(n, k, \mP, \mQ)$ with asymptotic Type I$+$II error less than one.
\end{corollary}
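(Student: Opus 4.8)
The plan is to run the average-case reduction $\textsc{To-Submatrix}$ once more, but with its planted-clique density $p$ and its blow-up factor chosen as functions of $\epsilon$, and to control its only nontrivial source of error — the total variation $\Delta$ from the multivariate rejection kernels, bounded in Lemma~\ref{lem:mrkupperbound} — using the defining property of $\pr{uc-a}$ together with the convexity of $E_\mP$. When $k = o(\sqrt{n})$ I would take blow-up $\ell = 1$, reduce from $\pr{pc}(n,k,p)$, and apply Corollary~\ref{cor:pclowerbounds}, for which it suffices to check $\KL(\mP\|\mQ) < \log p^{-1}$ and $E_\mP(\log p^{-1}) \ge 2\log n + \omega(1)$. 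When $k = \Omega(\sqrt{n})$ I would instead follow the proof of Corollary~\ref{cor:pdslowerbounds}: use Theorem~\ref{thm:highsparsehssd} and Lemma~\ref{lem:mrkupperbound} with a blow-up $\ell$ chosen so that $\omega(k^2/n) = \ell = o(n)$ and $\ell^2 \le c_+/(n^{\epsilon/2}\KL(\mP\|\mQ))$, where $c_+ = \Theta(\log p^{-1})$ is the log-ratio appearing in Lemma~\ref{lem:mrkupperbound}; then $k/\ell = o(\sqrt{n/\ell})$ kills the $\exp(-Q\epsilon^2 n^2/(32N))$ and $\sqrt{k^2/N}$ terms of Theorem~\ref{thm:tosubmatrix}, and it remains to show $\ell^2 E_\mP(c_+\ell^{-2}) \ge 2.5\log n$ for large $n$, which forces $\Delta = O(n^{-2.5}) = o(n^{-2})$. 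In both cases the dense side of $\pr{pc}$ has probability $1$, so Lemma~\ref{lem:mrkupperbound} imposes no constraint on $E_\mQ$.

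The key estimate is the same in both cases. Write $D_1 = \KL(\mP\|\mQ)$. Applying the definition of $\pr{uc-a}$ with the fixed exponent $\epsilon' = \epsilon/2 \in (0,1)$ yields a constant $c_{\epsilon'} > 0$, depending only on $(\mP,\mQ)$ and $\epsilon$, with $E_\mP(n^{\epsilon'}D_1) \ge c_{\epsilon'} n^{\epsilon'} D_1 \log n$ for all large $n$. Since $E_\mP$ is convex and attains its minimum value $0$ at $D_1$, the slope $x \mapsto E_\mP(x)/(x - D_1)$ is nondecreasing on $(D_1,\infty)$, so $E_\mP(x) \ge (x - D_1)\, c_{\epsilon'}\log n$ for every $x \ge n^{\epsilon'}D_1$. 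For $k = o(\sqrt n)$ the hypothesis $D_1 = o(n^{-\epsilon})$ gives $n^{\epsilon'} D_1 \to 0 < \log p^{-1}$, hence $E_\mP(\log p^{-1}) \ge (\log p^{-1} - D_1)\, c_{\epsilon'}\log n = (1-o(1))\log p^{-1}\cdot c_{\epsilon'}\log n$. For $k = \Omega(\sqrt n)$ the hypothesis $D_1 = o(n^{2-\epsilon}/k^4)$ forces $c_+/(n^{\epsilon'}D_1) = \omega(k^4/n^2)$, so the interval for $\ell$ is nonempty; for the chosen $\ell$ one has $c_+\ell^{-2} \ge n^{\epsilon'} D_1$ and $\ell^2 D_1 \le c_+/n^{\epsilon'} = o(1)$, so $\ell^2 E_\mP(c_+\ell^{-2}) \ge \ell^2(c_+\ell^{-2} - D_1)\,c_{\epsilon'}\log n = (c_+ - \ell^2 D_1)\,c_{\epsilon'}\log n = (1-o(1))\,c_+ c_{\epsilon'}\log n$.

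It remains to choose $p$. Since $c_{\epsilon'}$ is a fixed positive constant while $c_+(p) = \Theta(\log p^{-1}) \to \infty$ as $p \to 0$, fix $p = p(\epsilon) \in (0,1)$ small enough that $c_+(p)\, c_{\epsilon'} \ge 3$ (equivalently $\log p^{-1}\cdot c_{\epsilon'} \ge 3$). For large $n$ the displays above then read $E_\mP(\log p^{-1}) \ge 2.5\log n$ and $\ell^2 E_\mP(c_+\ell^{-2}) \ge 2.5\log n$ respectively, and moreover $\KL(\mP\|\mQ) \to 0 < \log p^{-1}$; so the hypotheses of Corollary~\ref{cor:pclowerbounds} (when $k = o(\sqrt n)$) and of Theorem~\ref{thm:highsparsehssd} fed with $\Delta = o(n^{-2})$ (when $k = \Omega(\sqrt n)$) are met, and Lemma~\ref{lem:3a} transfers the $\pr{pc}$ conjecture at density $p$ to the desired hardness of $\pr{ssd}(n,k,\mP,\mQ)$.

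The main obstacle — and the reason the statement only claims a $p$-dependent conclusion, unlike Corollaries~\ref{cor:pclowerbounds} and~\ref{cor:pdslowerbounds} — is a quantifier mismatch: $\pr{uc-a}$ supplies only an $\Omega(m\log n)$ lower bound on $E_\mP$, not the $\omega(m\log n)$ bound those corollaries assume, so the product $\ell^2 E_\mP(c_+\ell^{-2})$ is merely $\Theta(\log n)$ with an a priori unknown constant $c_+ c_{\epsilon'}$; convexity is what lets us transport the one available $\pr{uc-a}$ estimate at the point $n^{\epsilon'}D_1$ to the relevant point $c_+\ell^{-2}$. The fix is to spend the one remaining degree of freedom, the clique density $p$: shrinking $p$ inflates $c_+ = \Theta(\log p^{-1})$ and hence that constant past the value $2$ needed to beat the union bound over the $\approx n^2$ rejection kernels inside $\textsc{To-Submatrix}$. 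A secondary point to dispatch is the degenerate sub-case of $k = \Omega(\sqrt n)$ in which no admissible $\ell$ exists, which happens exactly when $\limsup_n k/n > 0$; along such a subsequence $k^2 \KL(\mP\|\mQ) \to 0$, so by a Pinsker bound applied to the mixture $H_1$ the problem is already information-theoretically impossible and the conclusion holds unconditionally.
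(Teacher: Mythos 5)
Your proposal is correct and takes essentially the same route as the paper's proof: invoke the $\pr{uc-a}$ bound at a fixed exponent ($\epsilon/2$, resp.\ $\epsilon$), transfer it via convexity of $E_\mP$ (using $E_\mP(\KL(\mP\|\mQ)) = 0$) to the threshold $c_+\ell^{-2}$ appearing in Lemma \ref{lem:mrkupperbound}, and then shrink the clique density $p$ so the resulting $\Theta(\log p^{-1}\cdot \log n)$ exponent beats $2\log n$, giving $\Delta = o(n^{-2})$ to feed into Theorems \ref{thm:lowsparsehssd} and \ref{thm:highsparsehssd}. The only differences are cosmetic — you route the $k = o(\sqrt{n})$ case through Corollary \ref{cor:pclowerbounds} rather than redoing the computation, and you explicitly dispatch the nearly-linear-$k$ regime where no admissible blow-up factor exists (a point the paper's choice $\ell = \Theta(k^2/n^{1-\epsilon/4})$ silently glosses over).
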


\begin{proof}
We first consider the case where $k = \Omega(\sqrt{n})$. Consider the reduction in Theorem \ref{thm:highsparsehssd} with blow-up factor $\ell = \Theta(k^2/n^{1 - \epsilon/4})$. Observe that by the given assumption, we have that $n^{\epsilon/2} \cdot \KL(\mP \| \mQ) = o(\ell^{-2})$. Applying Lemma \ref{lem:mrkupperbound} to the $\textsc{mrk}$ with blow-up factor $\ell^2$ yields
$$\Delta \le \frac{3 \exp\left( - \ell^2 \cdot E_\mP\left(\ell^{-2} \log p^{-1}\right) \right)}{1 - p}$$
Since $E_\mP(\lambda)$ is convex and minimized at $\KL(\mQ \| \mP)$, it follows that since $\ell^{-2} \log p^{-1} \ge n^{\epsilon/2} \cdot \KL(\mP \| \mQ)$ we have that
$$\frac{E_\mP\left(\ell^{-2} \log p^{-1}\right)}{\ell^{-2} \log p^{-1} - \KL(\mQ \| \mP)} \ge \frac{E_\mP\left(n^{\epsilon/2} \cdot \KL(\mP \| \mQ) \right)}{n^{\epsilon/2} \cdot \KL(\mP \| \mQ) - \KL(\mQ \| \mP)} \ge c \cdot \log n$$
for some constant $c > 0$ since $(\mP, \mQ)$ is in $\pr{uc-a}$. Using the fact that $\KL(\mQ \| \mP) \le \frac{1}{2} ell^{-2} \log p^{-1}$ for sufficiently large $n$, we have that
$$\Delta \le \exp\left( \frac{c}{2} \log p^{-1} \cdot \log n \right) = o(n^{-2})$$
if $p$ is taken to be sufficiently small. The first statement follows on applying Theorem \ref{thm:highsparsehssd}. Now consider the case where $k = o(\sqrt{n})$. Applying Lemma \ref{lem:mrkupperbound} to the $\textsc{mrk}$ with blow-up factor $1$ to the reduction in Theorem \ref{thm:lowsparsehssd} yields that
$$\Delta \le \frac{3 \exp\left( - E_\mP\left(\log p^{-1} \right) \right)}{1 - q}$$
by the given conditions. We now apply a similar convexity step, noting that since $n^{-\epsilon} \cdot \KL(\mP \| \mQ) \le \log p^{-1}$ for sufficiently large $n$, we have that
$$\frac{E_\mP\left(\log p^{-1}\right)}{\log p^{-1} - \KL(\mQ \| \mP)} \ge \frac{E_\mP\left(n^{\epsilon} \cdot \KL(\mP \| \mQ) \right)}{n^{\epsilon} \cdot \KL(\mP \| \mQ) - \KL(\mQ \| \mP)} \ge c' \cdot \log n$$
for some $c' > 0$. Now it similarly follows that $\Delta \le \exp\left( \frac{c'}{2} \log p^{-1} \cdot \log n \right) = o(n^{-2})$ if $p$ is taken to be sufficiently small. Combining this with Theorem \ref{thm:lowsparsehssd} completes the proof.
\end{proof}

\subsection{Polynomial Time Test Statistics for Submatrix Detection}
\label{sec:polytests}

In this section, we show algorithmic upper bounds for submatrix detection using two simple test statistics that can be computed in polynomial time. Given a computable pair of distributions $(\mP, \mQ)$ over the measurable space $(X, \mathcal{B})$ and a matrix $M \in X^{n \times n}$, define
\begin{align*}
T_{\text{sum}}(M) &= \frac{1}{n^2} \sum_{i, j = 1}^n \log \frac{d\mP}{d\mQ}(M_{ij}) \\
T_{\text{max}}(M) &= \max_{1 \le i, j \le n} \log \frac{d\mP}{d\mQ}(M_{ij})
\end{align*}
Note that both $T_{\text{sum}}$ and $T_{\text{max}}$ can be computed in $O(n^2 \cdot \mathcal{T})$ time where the Radon-Nikodym derivative $\frac{d\mP}{d\mQ}(M_{ij})$ can be evaluated in $O(\mathcal{T})$ time. Given that $(\mP, \mQ)$ is a computable pair, it follows that $T_{\text{sum}}$ and $T_{\text{max}}$ can be computed in polynomial time. We now show that thresholding these statistics solves the asymmetric detection problem $\textsc{asd}$ given sufficient LDPs for the LLR under each of $\mQ$ and $\mP$. We begin with the sum test $T_{\text{sum}}$.

\begin{proposition}[Sum Test] \label{prop:sumtest}
Let $M$ be an instance of $\pr{asd}(n, k, \mP, \mQ)$ and let
$$\tau_{\textnormal{sum}} = -\KL(\mQ \| \mP) + \frac{k^2}{2n^2} \cdot \SKL(\mP, \mQ)$$
Suppose that $k \ll n$ and
\begin{align*}
E_\mP\left( \frac{1}{2} \cdot \KL(\mP \| \mQ) \right) &= \omega(k^{-2}) \\
E_\mQ\left( -\frac{2n^2 - k^2}{2n^2 - 2k^2} \cdot \KL(\mQ \| \mP) \right) &= \omega(n^{-2}) \\
E_\mQ\left( -\frac{2n^2 - k^2}{2n^2} \cdot \KL(\mQ \| \mP) \right) &= \omega(n^{-2})
\end{align*}
Then $\bP_{H_0}\left[ T_{\textnormal{sum}}(M) \ge \tau_{\textnormal{sum}} \right] \to 0$ and $\bP_{H_1}\left[ T_{\textnormal{sum}}(M) < \tau_{\textnormal{sum}} \right] \to 0$ as $n \to \infty$.
\end{proposition}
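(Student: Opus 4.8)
The plan is to reduce both claims to one-dimensional large deviation (Chernoff) bounds for sums of i.i.d.\ copies of the log-likelihood ratio $L = \log \frac{d\mP}{d\mQ}$, exactly in the spirit of the binary hypothesis testing discussion of Section \ref{sec:defns}. Throughout I will use that, for a sum $\Sigma_m$ of $m$ i.i.d.\ copies of $L$ under $\mQ$, one has $\bP[\Sigma_m \ge m\tau] \le \exp(-m E_\mQ(\tau))$ whenever $\tau \ge \bE_\mQ[L] = -\KL(\mQ\|\mP)$ and, by the symmetric tilting argument, $\bP[\Sigma_m \le m\tau] \le \exp(-m E_\mQ(\tau))$ whenever $\tau \le -\KL(\mQ\|\mP)$; and likewise for copies under $\mP$, with $\bE_\mP[L] = \KL(\mP\|\mQ)$ and exponent $E_\mP$. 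The only preliminary bookkeeping is to rewrite the threshold as
$$\tau_{\textnormal{sum}} = -\frac{2n^2-k^2}{2n^2}\,\KL(\mQ\|\mP) + \frac{k^2}{2n^2}\,\KL(\mP\|\mQ),$$
so that $\tau_{\textnormal{sum}} \ge -\frac{2n^2-k^2}{2n^2}\KL(\mQ\|\mP) > -\KL(\mQ\|\mP)$, and to observe that $\tau_{\textnormal{sum}}$ is exactly the midpoint between the $H_0$-mean $-\KL(\mQ\|\mP)$ of $T_{\textnormal{sum}}$ and its conditional $H_1$-mean $\frac{1}{n^2}\big(k^2\KL(\mP\|\mQ)-(n^2-k^2)\KL(\mQ\|\mP)\big)$, the gap on each side being $\frac{k^2}{2n^2}\SKL(\mP,\mQ)$.

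For the $H_0$ claim: under $H_0$ all $n^2$ entries are i.i.d.\ $\mQ$, so $n^2 T_{\textnormal{sum}}(M)$ is a sum of $n^2$ i.i.d.\ copies of $L$ under $\mQ$, and since $\tau_{\textnormal{sum}} \ge -\KL(\mQ\|\mP)$ the Chernoff bound gives $\bP_{H_0}[T_{\textnormal{sum}}(M) \ge \tau_{\textnormal{sum}}] \le \exp(-n^2 E_\mQ(\tau_{\textnormal{sum}}))$. Because $E_\mQ$ is convex with minimum $0$ at $-\KL(\mQ\|\mP)$, it is nondecreasing on $[-\KL(\mQ\|\mP),\infty)$, and as $\tau_{\textnormal{sum}} \ge -\frac{2n^2-k^2}{2n^2}\KL(\mQ\|\mP) \ge -\KL(\mQ\|\mP)$ we obtain $E_\mQ(\tau_{\textnormal{sum}}) \ge E_\mQ\big(-\frac{2n^2-k^2}{2n^2}\KL(\mQ\|\mP)\big) = \omega(n^{-2})$ by hypothesis; hence the bound is $\exp(-\omega(1)) \to 0$.

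For the $H_1$ claim: by exchangeability the conditional law of $M$ given the planted index sets depends only on their sizes, so I may fix them to be $[k]\times[k]$. Then $n^2 T_{\textnormal{sum}}(M) = A + B$ with $A$ a sum of $k^2$ i.i.d.\ copies of $L$ under $\mP$, $B$ an independent sum of $n^2-k^2$ i.i.d.\ copies of $L$ under $\mQ$. Splitting the threshold at $a = \frac{k^2}{2}\KL(\mP\|\mQ)$ and applying a union bound,
$$\bP_{H_1}\big[T_{\textnormal{sum}}(M) < \tau_{\textnormal{sum}}\big] \le \bP\big[A < \tfrac{k^2}{2}\KL(\mP\|\mQ)\big] + \bP\big[B < n^2\tau_{\textnormal{sum}} - \tfrac{k^2}{2}\KL(\mP\|\mQ)\big].$$
Since $\tfrac12\KL(\mP\|\mQ) \le \KL(\mP\|\mQ) = \bE_\mP[L]$, the lower-tail Chernoff bound under $\mP$ bounds the first term by $\exp\big(-k^2 E_\mP(\tfrac12\KL(\mP\|\mQ))\big) = \exp(-\omega(1))$. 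A direct computation gives $n^2\tau_{\textnormal{sum}} - \tfrac{k^2}{2}\KL(\mP\|\mQ) = -\tfrac{2n^2-k^2}{2}\KL(\mQ\|\mP)$, i.e.\ the second event is that $B$ falls below $(n^2-k^2)$ times $-\frac{2n^2-k^2}{2n^2-2k^2}\KL(\mQ\|\mP)$, which is $\le -\KL(\mQ\|\mP) = \bE_\mQ[L]$; the lower-tail Chernoff bound under $\mQ$ bounds it by $\exp\big(-(n^2-k^2)E_\mQ(-\frac{2n^2-k^2}{2n^2-2k^2}\KL(\mQ\|\mP))\big)$, which is $\exp\big(-(n^2-k^2)\cdot\omega(n^{-2})\big) \to 0$ because $k \ll n$ forces $n^2-k^2 = \Omega(n^2)$.

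The only genuine content is the bookkeeping in the first paragraph: recognizing $\tau_{\textnormal{sum}}$ as the symmetric midpoint and choosing the union-bound split point $\frac{k^2}{2}\KL(\mP\|\mQ)$ so that the three resulting one-sided large deviations land precisely at the arguments $\frac12\KL(\mP\|\mQ)$, $-\frac{2n^2-k^2}{2n^2-2k^2}\KL(\mQ\|\mP)$, and $-\frac{2n^2-k^2}{2n^2}\KL(\mQ\|\mP)$ at which $E_\mP,E_\mQ$ are assumed large. The only mild technical point is that one needs the lower-tail Chernoff bound for sums of $\mQ$-copies of $L$ (and an upper argument for $\mP$), which is not literally stated in Section \ref{sec:defns} but follows from the identical Legendre-duality argument with the sign of the exponential tilt reversed; everything else is convexity of $E_\mQ$ and a union bound, so I do not expect any real obstacle.
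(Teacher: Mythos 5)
Your proposal is correct and follows essentially the same route as the paper's proof: a Chernoff bound under $H_0$ at the point $-\frac{2n^2-k^2}{2n^2}\KL(\mQ\|\mP)$ (the paper bounds the probability at this smaller threshold directly, you equivalently use monotonicity of $E_\mQ$ right of its minimum), and under $H_1$ the identical union-bound split of $n^2\tau_{\textnormal{sum}}$ into $k^2\cdot\frac12\KL(\mP\|\mQ)$ for the planted block and $(n^2-k^2)\cdot\big(-\frac{2n^2-k^2}{2n^2-2k^2}\KL(\mQ\|\mP)\big)$ for the rest, with lower-tail Chernoff bounds obtained by taking the tilting parameters nonpositive. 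The paper handles the sign of the tilt exactly as you describe, so there is no gap.
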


\begin{proof}
Let $\tau' = -\frac{2n - k^2}{2n} \cdot \KL(\mQ \| \mP)$ and note that $\tau' \le \tau_{\text{sum}}$. Under $H_0$, by a Chernoff bound we have that if $\lambda \ge 0$ then
\begin{align*}
\bP_{H_0}\left[ T_{\textnormal{sum}}(M) \ge \tau_{\textnormal{sum}} \right] &\le \bP_{H_0}\left[ T_{\textnormal{sum}}(M) \ge \tau' \right] \\
&= \bP_{H_0}\left[ \exp\left(n^2 \lambda \cdot T_{\textnormal{sum}}(M) \right) \ge \exp\left( n^2\lambda \cdot \tau' \right) \right] \\
&\le \exp\left( n^2 \cdot \psi_{\mQ}\left(\lambda\right) -n^2 \lambda \cdot \tau' \right)
\end{align*}
Since $\tau' \ge -\KL(\mQ \| \mP)$, we may take $\lambda \ge 0$ so that $\lambda \cdot \tau' - \psi_{\mQ}(\lambda)$ is arbitrarily close to $E_\mQ(\tau')$. Therefore we have that
$$\bP_{H_0}\left[ T_{\textnormal{sum}}(M) \ge \tau_{\textnormal{sum}} \right] \le \exp\left( - n^2 \cdot E_\mQ(\tau') \right) = o(1)$$
Let $S', T' \subseteq [n]$ be the latent row and column indices of the planted part of $M$ under $H_1$. Note that $\tau_{\text{sum}} = (1 - \frac{k^2}{n^2}) \cdot \tau_1 + \frac{k^2}{n^2} \cdot \tau_2$ where $\tau_1 = -\frac{2n^2 - k^2}{2n^2 - 2k^2} \cdot \KL(\mQ \| \mP)$ and $\tau_2 = \frac{1}{2} \cdot \KL(\mP \| \mQ)$. Thus under $H_1$, by a Chernoff and union bound we have that if $\lambda_1, \lambda_2 \le 0$
\begin{align*}
\bP_{H_1}\left[ T_{\textnormal{sum}}(M) < \tau_{\textnormal{sum}} \right] &\le \bP_{H_1}\left[ \sum_{(i, j) \not \in S' \times T'} \log \frac{d\mP}{d\mQ}(M_{ij}) < \left(1 - \frac{k^2}{n^2}\right) \cdot \tau_1 \right] \\
&\quad \quad + \bP_{H_1}\left[ \sum_{(i, j) \in S' \times T'} \log \frac{d\mP}{d\mQ}(M_{ij}) < \frac{k^2}{n^2} \cdot \tau_2 \right] \\
&\le \exp\left( -(n^2 - k^2) \lambda_1 \cdot \tau_1 \right) \cdot \bE_{H_1} \left[ \exp\left( \lambda_1 \cdot \sum_{(i, j) \not \in S' \times T'} \log \frac{d\mP}{d\mQ}(M_{ij}) \right) \right] \\
&\quad \quad + \exp\left( -k^2 \lambda_2 \cdot \tau_2 \right) \cdot \bE_{H_1} \left[ \exp\left( \lambda_2 \cdot \sum_{(i, j) \in S' \times T'} \log \frac{d\mP}{d\mQ}(M_{ij}) \right) \right] \\
&= \exp\left( (n^2 - k^2) \cdot \psi_{\mQ}(\lambda_1) - (n^2 - k^2) \cdot \lambda_1 \cdot \tau_1 \right) +  \exp\left( k^2 \cdot \psi_{\mP}(\lambda_2) - k^2 \cdot \lambda_2 \cdot \tau_2 \right)
\end{align*}
Since $\tau_1 \le - \KL(\mQ \| \mP)$ and $\tau_2 \le \KL(\mP \| \mQ)$, we may take $\lambda_1 \le 0$ and $\lambda_2 \le 0$ so that $\lambda_1 \cdot \tau_1 - \psi_{\mQ}(\lambda_1)$ is arbitrarily close to $E_\mQ(\tau_1)$ and $\lambda_2 \cdot \tau_2 - \psi_{\mP}(\lambda_2)$ is arbitrarily close to $E_\mP(\tau_2)$. This yields that
$$\bP_{H_1}\left[ T_{\textnormal{sum}}(M) < \tau_{\textnormal{sum}} \right] \le \exp\left( - (n^2 - k^2) \cdot E_\mQ(\tau_1) \right) + \exp\left( - k^2 \cdot E_\mP(\tau_2) \right) = o(1)$$
which completes the proof of the proposition.
\end{proof}

Given an LDP for the LLR under $\mQ$ and $\mP$, it also holds that $T_{\text{sum}}$ solves the asymmetric detection problem.

\begin{proposition}[Max Test] \label{prop:maxtest}
Let $M$ be an instance of $\pr{asd}(n, k, \mP, \mQ)$ and suppose there is a $\tau_{\textnormal{max}} \in \left(- \KL(\mQ \| \mP), \KL(\mP \| \mQ) \right)$ with 
$$E_\mQ(\tau_{\textnormal{max}}) \ge 2 \log n + \omega(1) \quad \textnormal{and} \quad E_\mP(\tau_{\textnormal{max}}) = \omega(1)$$
then $\bP_{H_0}\left[ T_{\textnormal{max}}(M) \ge \tau_{\textnormal{max}} \right] \to 0$ and $\bP_{H_1}\left[ T_{\textnormal{max}}(M) < \tau_{\textnormal{max}} \right] \to 0$ as $n \to \infty$.
\end{proposition}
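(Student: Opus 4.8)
The plan is to treat the two hypotheses separately, in each case reducing to a one-dimensional Chernoff bound on the single-entry log-likelihood ratio $L(X) = \log \frac{d\mP}{d\mQ}(X)$, of exactly the kind recorded in Section \ref{sec:defns}.

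\textbf{Analysis under $H_0$.} All $n^2$ entries of $M$ are i.i.d.\ samples from $\mQ$, so a union bound over entries gives $\bP_{H_0}[T_{\text{max}}(M) \ge \tau_{\text{max}}] \le n^2 \cdot \bP_{X \sim \mQ}[L(X) \ge \tau_{\text{max}}]$. Since $\tau_{\text{max}} > -\KL(\mQ \| \mP)$, the maximizer in the concave optimization defining $E_\mQ(\tau_{\text{max}})$ can be taken nonnegative, so the corresponding Chernoff bound yields $\bP_{X \sim \mQ}[L(X) \ge \tau_{\text{max}}] \le \exp(-E_\mQ(\tau_{\text{max}}))$. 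Hence $\bP_{H_0}[T_{\text{max}}(M) \ge \tau_{\text{max}}] \le \exp(2 \log n - E_\mQ(\tau_{\text{max}})) = \exp(-\omega(1)) \to 0$ by the hypothesis $E_\mQ(\tau_{\text{max}}) \ge 2 \log n + \omega(1)$.

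\textbf{Analysis under $H_1$.} Let $S', T' \subseteq [n]$ with $|S'| = |T'| = k$ be the latent row and column index sets, so that the $k^2$ entries $(M_{ij})_{(i,j) \in S' \times T'}$ are i.i.d.\ from $\mP$. Discarding the non-planted entries only enlarges the event $\{T_{\text{max}}(M) < \tau_{\text{max}}\}$, so $\bP_{H_1}[T_{\text{max}}(M) < \tau_{\text{max}}] \le \big( \bP_{X \sim \mP}[L(X) < \tau_{\text{max}}] \big)^{k^2}$. Since $\tau_{\text{max}} < \KL(\mP \| \mQ)$, the maximizer in $E_\mP(\tau_{\text{max}})$ can be taken nonpositive, so the lower-tail Chernoff bound gives $\bP_{X \sim \mP}[L(X) < \tau_{\text{max}}] \le \exp(-E_\mP(\tau_{\text{max}}))$. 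Therefore $\bP_{H_1}[T_{\text{max}}(M) < \tau_{\text{max}}] \le \exp(-k^2 E_\mP(\tau_{\text{max}})) \le \exp(-E_\mP(\tau_{\text{max}})) = \exp(-\omega(1)) \to 0$, using $k \ge 1$ and $E_\mP(\tau_{\text{max}}) = \omega(1)$.

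I do not anticipate a genuine obstacle: the argument is a union bound paired with the single-letter large deviation estimates already set up in Section \ref{sec:defns}. The only point requiring care is the sign of the two Legendre-transform maximizers, which is exactly what dictates the requirement $\tau_{\text{max}} \in (-\KL(\mQ \| \mP), \KL(\mP \| \mQ))$ in the statement. One mild subtlety worth noting is that in the $H_1$ estimate the crude single-entry bound $\bP_{X \sim \mP}[L(X) < \tau_{\text{max}}] \le \exp(-E_\mP(\tau_{\text{max}}))$ already suffices, so no concentration over the whole planted block is needed — it is enough for one planted entry to clear the threshold.
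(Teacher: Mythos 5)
Your proof is correct and follows essentially the same route as the paper: the paper proves the max test by invoking the search-test argument with $k=1$, which is exactly your union bound over the $n^2$ entries under $H_0$ and a Chernoff bound on a planted entry's LLR under $H_1$, with the sign of the Legendre maximizers justified by $\tau_{\textnormal{max}} \in (-\KL(\mQ \| \mP), \KL(\mP \| \mQ))$. Your use of all $k^2$ planted entries under $H_1$ is a harmless strengthening that you then relax to the same single-entry bound.
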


\begin{proof}
This follows from the same argument used to analyze the search test $T_{\text{search}}$ in the proof of Proposition \ref{prop:searchtest} applied with $k = 1$.
\end{proof}

We now proceed to show that our algorithmic upper bounds hold for all computable pairs $(\mP, \mQ)$ in $\pr{uc-b}$. To do this, we will establish the following simple consequences for $(\mP, \mQ)$ in $\pr{uc-b}$. As mentioned in Section \ref{sec:summary}, the class $\pr{uc-b}$ is introduced in \cite{hajek2017information}. The third property below is derived in Section 3 of \cite{hajek2017information}.

\begin{lemma}[Properties of $\textsc{uc-b}$] \label{lem:UCB}
Suppose that $(\mP, \mQ)$ is in $\pr{uc-a}$ with constant $C \ge 1$. Then
\begin{enumerate}
\item It holds for all $\tau \in [-2C \cdot \KL(\mP \| \mQ), 0]$ that
$$E_\mP\left(\KL(\mP \| \mQ) + \tau \right) \ge \frac{\tau^2}{4C \cdot \KL(\mP \| \mQ)}$$
\item It holds for all $\tau \in [-2C \cdot \KL(\mQ \| \mP), 2C \cdot \KL(\mQ \| \mP)]$ that
$$E_\mQ\left(-\KL(\mQ \| \mP) + \tau \right) \ge \frac{\tau^2}{4C \cdot \KL(\mQ \| \mP)}$$
\item It holds that $\KL(\mP \| \mQ) = \Theta(\KL(\mQ \| \mP))$.
\end{enumerate}
\end{lemma}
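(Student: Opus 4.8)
The plan is to derive all three properties from the two quadratic upper bounds in the definition of $\pr{uc-b}$ together with the elementary facts about the cumulant generating functions recorded in Section \ref{sec:defns}: both $\psi_\mP$ and $\psi_\mQ$ are convex, $\psi_\mQ(0) = \psi_\mP(0) = 0$ and $\psi_\mQ(1) = \psi_\mP(-1) = 0$, the shift identity $\psi_\mP(\lambda) = \psi_\mQ(\lambda + 1)$ holds, and the relevant derivatives are $\psi_\mP'(0) = \psi_\mQ'(1) = \KL(\mP \| \mQ)$ and $\psi_\mQ'(0) = \psi_\mP'(-1) = -\KL(\mQ \| \mP)$. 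We may assume both $\KL(\mP \| \mQ)$ and $\KL(\mQ \| \mP)$ are positive, since otherwise $\mP = \mQ$ and all three statements are trivial.

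For the first two properties I would lower bound the Chernoff exponent by substituting a single near-optimal $\lambda$ into its variational formula. For property 1, for any $\lambda \in [-1, 0]$ we have $E_\mP(\KL(\mP \| \mQ) + \tau) \ge \lambda \cdot (\KL(\mP \| \mQ) + \tau) - \psi_\mP(\lambda)$, and the $\pr{uc-b}$ bound $\psi_\mP(\lambda) \le \KL(\mP \| \mQ) \cdot \lambda + C \cdot \KL(\mP \| \mQ) \cdot \lambda^2$ cancels the linear term, leaving the concave quadratic $\lambda \tau - C \cdot \KL(\mP \| \mQ) \cdot \lambda^2$. For $\tau \in [-2C \cdot \KL(\mP \| \mQ), 0]$ its unconstrained maximizer $\lambda^{*} = \tau / (2C \cdot \KL(\mP \| \mQ))$ lies in $[-1, 0]$, and substituting it gives exactly $\tau^2 / (4C \cdot \KL(\mP \| \mQ))$. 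Property 2 is the identical computation with $\psi_\mP$ replaced by $\psi_\mQ$ and the center $\KL(\mP \| \mQ)$ replaced by $-\KL(\mQ \| \mP)$; since the $\pr{uc-b}$ bound on $\psi_\mQ$ holds on all of $[-1, 1]$ rather than only $[-1, 0]$, the maximizer is admissible for $\tau$ of either sign, which is exactly why the valid range is the two-sided interval $[-2C \cdot \KL(\mQ \| \mP), 2C \cdot \KL(\mQ \| \mP)]$.

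For property 3 I would pair each $\pr{uc-b}$ upper bound with the tangent-line lower bound furnished by convexity at the \emph{opposite} zero of the same function. On $[0, 1]$, convexity of $\psi_\mQ$ at $\lambda = 1$ gives $\psi_\mQ(\lambda) \ge \psi_\mQ'(1) \cdot (\lambda - 1) = \KL(\mP \| \mQ) \cdot (\lambda - 1)$, while the $\pr{uc-b}$ bound gives $\psi_\mQ(\lambda) \le -\KL(\mQ \| \mP) \cdot \lambda + C \cdot \KL(\mQ \| \mP) \cdot \lambda^2$; chaining these and evaluating at $\lambda = 1/(2C) \in (0, 1]$ rearranges to $\KL(\mQ \| \mP) \le 2(2C - 1) \cdot \KL(\mP \| \mQ)$. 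Running the mirror argument on $[-1, 0]$ with $\psi_\mP$, using the tangent at $\lambda = -1$ with $\psi_\mP'(-1) = -\KL(\mQ \| \mP)$ together with the $\pr{uc-b}$ bound on $\psi_\mP$ and evaluating at $\lambda = -1/(2C)$, gives the reverse inequality $\KL(\mP \| \mQ) \le 2(2C - 1) \cdot \KL(\mQ \| \mP)$. Since $C$ is a fixed constant, the two inequalities together are precisely $\KL(\mP \| \mQ) = \Theta(\KL(\mQ \| \mP))$.

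No genuinely difficult step is anticipated; the one thing to watch is the bookkeeping of ranges of validity — verifying that each evaluation point ($\lambda^{*}$ in properties 1 and 2, and $\pm 1/(2C)$ in property 3) lies in the interval on which the invoked $\pr{uc-b}$ bound is assumed, which boils down to $C \ge 1$ and the stated hypotheses on $\tau$ — together with keeping the signs straight when rearranging the inequalities in property 3.
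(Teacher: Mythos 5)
Your proofs of properties 1 and 2 are exactly the paper's argument: plug the $\pr{uc-b}$ quadratic upper bound on $\psi_\mP$ (resp.\ $\psi_\mQ$) into the variational formula for the Chernoff exponent, cancel the linear term, and evaluate the resulting concave quadratic at $\lambda^* = \tau/(2C \cdot \KL(\mP \| \mQ))$ (resp.\ $\tau/(2C \cdot \KL(\mQ \| \mP))$), checking that the stated range of $\tau$ places $\lambda^*$ in the interval where the $\pr{uc-b}$ bound is assumed. Where you diverge is property 3: the paper simply invokes Lemma 2 of \cite{hajek2017information}, which gives $\min\{\KL(\mQ \| \mP), \KL(\mP \| \mQ)\} \ge \frac{1}{C} \max\{\KL(\mQ \| \mP), \KL(\mP \| \mQ)\}$, whereas you give a self-contained derivation: the tangent-line (equivalently, Jensen) lower bounds $\psi_\mQ(\lambda) \ge \KL(\mP \| \mQ)(\lambda - 1)$ and $\psi_\mP(\lambda) \ge -\KL(\mQ \| \mP)(\lambda + 1)$, chained against the two $\pr{uc-b}$ upper bounds and evaluated at $\lambda = \pm 1/(2C)$, yield $\KL(\mQ \| \mP) \le 2(2C - 1)\,\KL(\mP \| \mQ)$ and $\KL(\mP \| \mQ) \le 2(2C - 1)\,\KL(\mQ \| \mP)$; I verified the algebra and both directions check out (note the tangent bounds are safest justified via Jensen, $\psi_\mQ(\lambda) = \log \bE_\mP[e^{(\lambda - 1)L}] \ge (\lambda - 1)\KL(\mP \| \mQ)$, which sidesteps any differentiability fuss at the endpoint). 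Your route buys independence from the external reference at the cost of a slightly worse constant, $2(2C-1)$ versus the cited $C$; since the conclusion is only $\Theta(\cdot)$, this makes no difference to the lemma or its uses.
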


\begin{proof}
The fact that $(\mP, \mQ)$ is in $\pr{uc-b}$ implies that
\begin{align*}
E_\mP\left(\KL(\mP \| \mQ) + \tau \right) &= \sup_{\lambda \in \mathbb{R}} \left\{ \left(\KL(\mP \| \mQ) + \tau \right) \cdot \lambda - \psi_\mP(\lambda) \right\} \\
&\ge \sup_{\lambda \in [-1, 0]} \left\{ \tau \cdot \lambda - C \cdot \KL(\mP \| \mQ) \cdot \lambda^2 \right\}
\end{align*}
Now set $\lambda = \frac{\tau}{2C \cdot \KL(\mP \| \mQ)} \in [-1, 0]$ and note that this implies that
$$E_\mP\left(\KL(\mP \| \mQ) + \tau \right) \ge \frac{\tau^2}{4C \cdot \KL(\mP \| \mQ)}$$
Similarly, we have that
$$E_\mQ\left(-\KL(\mQ \| \mP) + \tau \right) \ge \sup_{\lambda \in [-1, 1]} \left\{ \tau \cdot \lambda - C \cdot \KL(\mQ \| \mP) \cdot \lambda^2 \right\} \ge \frac{\tau^2}{4C \cdot \KL(\mQ \| \mP)}$$
on setting $\lambda = \frac{\tau}{2C \cdot \KL(\mQ \| \mP)} \in [-1, 1]$. Property 3 follows from Lemma 2 in \cite{hajek2017information}, which shows that
$$\min\left\{ \KL(\mQ \| \mP), \KL(\mP \| \mQ) \right\} \ge \frac{1}{C} \cdot \max\left\{ \KL(\mQ \| \mP), \KL(\mP \| \mQ) \right\}$$
This implies that $\KL(\mP \| \mQ) = \Theta(\KL(\mQ \| \mP))$.
\end{proof}

We now combine these properties with Propositions \ref{prop:sumtest} and \ref{prop:maxtest} to show algorithmic achievability of the computational barriers shown above for $\textsc{ssd}$ when $(\mP, \mQ)$ is in $\pr{uc-b}$.

\begin{corollary}[Algorithmic Upper Bounds for $\pr{uc-b}$]
Suppose that $(\mP, \mQ)$ is a computable pair in $\pr{uc-b}$. Then it follows that:
\begin{itemize}
\item If $k = o(n)$, $k = \Omega(\sqrt{n})$ and $\SKL(\mP, \mQ) = \omega\left( \frac{n^2}{k^4} \right)$, then $T_{\textnormal{sum}}$ solves $\pr{ssd}(n, k, \mP, \mQ)$.
\item If $k = o(\sqrt{n})$ and $\SKL(\mP, \mQ) \ge c \cdot \log n$ for some sufficiently large constant $c > 0$, then $T_{\textnormal{max}}$ with $\tau_{\textnormal{max}} = 0$ solves $\pr{ssd}(n, k, \mP, \mQ)$.
\end{itemize}
\end{corollary}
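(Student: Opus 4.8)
The plan is to reduce $\pr{ssd}$ to the asymmetric problem and then quote the two test-statistic analyses already established. Both $T_{\textnormal{sum}}$ and $T_{\textnormal{max}}$ depend on $M$ only through the multiset of its entries, and this multiset has the same law under the planted hypotheses of $\pr{ssd}(n,k,\mP,\mQ)$ and $\pr{asd}(n,k,\mP,\mQ)$ — in both cases $k^2$ i.i.d.\ samples from $\mP$ together with $n^2-k^2$ i.i.d.\ samples from $\mQ$, since in each model the planted entries are conditionally independent given the latent index set(s) — and is identically $\mQ^{\otimes n\times n}$ under the uniform hypotheses. Hence thresholding $T_{\textnormal{sum}}$ or $T_{\textnormal{max}}$ has the same Type~I and Type~II errors on $\pr{ssd}$ as on $\pr{asd}$, so it suffices to verify the hypotheses of Proposition~\ref{prop:sumtest} (resp.\ Proposition~\ref{prop:maxtest}). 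For this I will use the three consequences of $\pr{uc-b}$ membership collected in Lemma~\ref{lem:UCB}; write $C\ge 1$ for the $\pr{uc-b}$ constant and recall that part~3 gives $\KL(\mP\|\mQ) = \Theta(\KL(\mQ\|\mP)) = \Theta(\SKL(\mP,\mQ))$, with all implied constants depending only on $C$.

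\emph{Sum test.} Here $k = o(n)$, so $k\ll n$ and the first hypothesis of Proposition~\ref{prop:sumtest} holds. Taking $\tau = -\tfrac12\KL(\mP\|\mQ)\in[-2C\KL(\mP\|\mQ),0]$ in part~1 of Lemma~\ref{lem:UCB} gives $E_\mP\!\left(\tfrac12\KL(\mP\|\mQ)\right)\ge \KL(\mP\|\mQ)/(16C) = \Omega(\SKL(\mP,\mQ))$; since $k\le n$ implies $n^2/k^4\ge k^{-2}$, the hypothesis $\SKL(\mP,\mQ) = \omega(n^2/k^4)$ then yields $E_\mP\!\left(\tfrac12\KL(\mP\|\mQ)\right) = \omega(k^{-2})$. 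Each of the two arguments fed to $E_\mQ$ in Proposition~\ref{prop:sumtest} equals $-\KL(\mQ\|\mP) + \tau$ with $\tau = \pm\Theta(k^2/n^2)\cdot\KL(\mQ\|\mP)$, which for large $n$ lies in $[-2C\KL(\mQ\|\mP),2C\KL(\mQ\|\mP)]$ because $k = o(n)$; part~2 of Lemma~\ref{lem:UCB} then lower-bounds each by $\tau^2/(4C\KL(\mQ\|\mP)) = \Theta\!\left(k^4\,\SKL(\mP,\mQ)/n^4\right)$, which is $\omega(n^{-2})$ by $\SKL(\mP,\mQ) = \omega(n^2/k^4)$. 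Proposition~\ref{prop:sumtest} now shows $T_{\textnormal{sum}}$, with the threshold $\tau_{\textnormal{sum}}$ defined there, solves $\pr{asd}(n,k,\mP,\mQ)$, hence $\pr{ssd}(n,k,\mP,\mQ)$.

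\emph{Max test.} Use $\tau_{\textnormal{max}} = 0 \in (-\KL(\mQ\|\mP),\KL(\mP\|\mQ))$, both divergences being positive. Part~2 of Lemma~\ref{lem:UCB} with $\tau = \KL(\mQ\|\mP)$ gives $E_\mQ(0)\ge \KL(\mQ\|\mP)/(4C)\ge c_0\,\SKL(\mP,\mQ)$ for an explicit $c_0 > 0$ depending only on $C$ (using part~3), and part~1 with $\tau = -\KL(\mP\|\mQ)$ gives $E_\mP(0)\ge \KL(\mP\|\mQ)/(4C) = \Omega(\SKL(\mP,\mQ))$. Thus if $\SKL(\mP,\mQ)\ge c\log n$ we get $E_\mP(0) = \omega(1)$, and $E_\mQ(0)\ge c_0 c\log n \ge 2\log n + \omega(1)$ once $c$ is chosen large enough that $c_0 c\ge 3$. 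Proposition~\ref{prop:maxtest} with $\tau_{\textnormal{max}} = 0$ then shows $T_{\textnormal{max}}$ solves $\pr{asd}(n,k,\mP,\mQ)$, hence $\pr{ssd}(n,k,\mP,\mQ)$.

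The proof is essentially asymptotic bookkeeping and I do not expect a genuine obstacle; the points needing care are (i) checking that every point at which Lemma~\ref{lem:UCB} is invoked lies inside its stated validity window $[-2C\KL(\mP\|\mQ),0]$ or $[-2C\KL(\mQ\|\mP),2C\KL(\mQ\|\mP)]$, which is exactly where $k = o(n)$ is used in the sum-test case, and (ii) in the max-test case, fixing the threshold constant $c$ only after the constant $c_0$ coming from parts~2 and~3 of Lemma~\ref{lem:UCB} has been made explicit, so that $c$ can be taken large relative to $1/c_0$.
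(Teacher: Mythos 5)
Your proposal is correct and follows essentially the same route as the paper: verify the $E_\mP$ and $E_\mQ$ hypotheses of Propositions \ref{prop:sumtest} and \ref{prop:maxtest} using the quadratic lower bounds and $\KL(\mP\|\mQ)=\Theta(\KL(\mQ\|\mP))$ from Lemma \ref{lem:UCB}, with the same choices of $\tau$ up to bookkeeping. Your extra remarks — that the permutation-invariant statistics have identical distributions under $\pr{ssd}$ and $\pr{asd}$, and that the arguments fed to Lemma \ref{lem:UCB} lie in its validity windows — are details the paper leaves implicit (it relies on the general $\pr{ssd}$-to-$\pr{asd}$ reduction noted in Section \ref{sec:defns}), and they only make the argument more careful, not different.
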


\begin{proof}
We begin with the first statement. By Proposition \ref{prop:sumtest}, it suffices to verify the lower bounds on $E_\mP$ and $E_\mQ$ in the statement of the proposition. To do this, we apply properties (1) and (2) in Lemma \ref{lem:UCB}. Let $C \ge 1$ be the constant for which $(\mP, \mQ)$ is in $\pr{uc-b}$ and observe that
\begin{align*}
E_\mP\left( \frac{1}{2} \cdot \KL(\mP \| \mQ) \right) &\ge \frac{\left( - \frac{1}{2} \cdot \KL(\mP \| \mQ) \right)^2}{4C \cdot \KL(\mP \| \mQ)} = \frac{1}{16C} \cdot \KL(\mP \| \mQ) = \omega(k^{-2}) \\
E_\mQ\left( -\frac{2n^2 - k^2}{2n^2 - 2k^2} \cdot \KL(\mQ \| \mP) \right) &\ge \frac{\left( \frac{k^2}{2n^2 - 2k^2} \cdot \KL(\mQ \| \mP) \right)^2}{4C \cdot \KL(\mQ \| \mP)} = \frac{k^4}{16C(n^2 - k^2)^2} \cdot \KL(\mQ \| \mP) = \omega(n^{-2}) \\
E_\mQ\left( -\frac{2n^2 - k^2}{2n^2} \cdot \KL(\mQ \| \mP) \right) &\ge \frac{\left( \frac{k^2}{2n^2} \cdot \KL(\mQ \| \mP) \right)^2}{4C \cdot \KL(\mQ \| \mP)} = \frac{k^4}{16C \cdot n^4} \cdot \KL(\mQ \| \mP) = \omega(n^{-2})
\end{align*}
since $k = o(n)$, $\KL(\mP \| \mQ) = \Theta(\KL(\mQ \| \mP))$ and $\SKL(\mP, \mQ) = \omega\left( \frac{n^2}{k^4} \right)$. We now verify the second statement for $\tau_{\textnormal{max}} = 0$. It suffices to verify the two lower bounds on $E_\mP$ and $E_\mQ$ in the statement of Proposition \ref{prop:maxtest}. Note that
\begin{align*}
E_\mP(0) &\ge \frac{\left( - \KL(\mP \| \mQ) \right)^2}{4C \cdot \KL(\mP \| \mQ)} = \frac{1}{4C} \cdot \KL(\mP \| \mQ) \ge 3 \log n \\
E_\mQ(0) &\ge \frac{\left( \KL(\mQ \| \mP) \right)^2}{4C \cdot \KL(\mQ \| \mP)} = \frac{1}{4C} \cdot \KL(\mQ \| \mP) \ge 3 \log n
\end{align*}
for sufficiently large $c > 0$. This completes the proof of the corollary.
\end{proof}

\section{Statistical Limit of Submatrix Detection}
\label{sec:statbarriers}

In this section, we show information-theoretic lower bounds for our universal formulations of submatrix detection and provide a test statistic showing that this boundary is achievable.

\subsection{Information-Theoretic Lower Bound for Submatrix Detection}
\label{sec:itlowerbounds}

Assuming that $\mP$ and $\mQ$ have finite $\chi^2$ divergence, we can obtain the following information-theoretic lower bound for $\pr{ssd}$ with the distribution pair $(\mP, \mQ)$. The proof uses a similar $\chi^2$ divergence computation as in Lemma \ref{lem:hidingentries} and the information-theoretic lower bounds for planted dense subgraph shown in \cite{hajek2015computational}.

\begin{theorem} \label{thm:itlowerbounds}
Suppose that $\mP$ and $\mQ$ are probability distributions on a measurable space $(X,\mathcal{B})$ where $\mP$ is absolutely continuous with respect to $\mQ$. If $\chi^2( \mP\| \mQ)$ is finite and satisfies that
$$\chi^2( \mP \| \mQ) < \frac{1}{16e} \left( \frac{1}{n} \log \left( \frac{en}{k} \right) \wedge \frac{n^2}{k^4} \right)$$
then there is a function $\tau : \mathbb{R}_{> 0} \to \mathbb{R}_{> 0}$ such that $\lim_{t \to 0^+} \tau(t) = 0$ and
$$\TV\left( \mathcal{M}_n(\mP, \mQ, k), \, \mQ^{\otimes n \times n} \right) \le \tau \left( \frac{\chi^2( \mP \| \mQ)}{\frac{1}{n} \log \left( \frac{en}{k} \right) \wedge \frac{n^2}{k^4}} \right)$$
\end{theorem}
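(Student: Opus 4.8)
The plan is to prove the stated bound by the second moment method: control the $\chi^2$ divergence $\chi^2\bigl(\mathcal{M}_n(\mP,\mQ,k)\,\|\,\mQ^{\otimes n\times n}\bigr)$ and then invoke $\TV\le\tfrac12\sqrt{\chi^2}$, reading off the function $\tau$ from the resulting estimate, which will depend on $n,k$ only through the ratio in the theorem.

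First I would run the $\chi^2$ computation exactly as in the proof of Lemma \ref{lem:hidingentries}, the only change being that the planted structure is two–dimensional. Writing $f=\tfrac{d\mP}{d\mQ}$, the density of $\mathcal{M}_n(\mP,\mQ,k)$ relative to $\mQ^{\otimes n\times n}$ at a matrix $M$ is $\bE_{S\sim\mathcal{U}_{k,n}}\bigl[\prod_{i,j\in S}f(M_{ij})\bigr]$; squaring, applying Fubini, using $\bE_{\mQ}[f]=1$ and $\bE_{\mQ}[f^2]=1+\chi^2(\mP\|\mQ)$, and noting that two independent planted blocks $S\times S$ and $S'\times S'$ overlap in exactly the $(S\cap S')\times(S\cap S')$ cells, gives
$$\chi^2\bigl(\mathcal{M}_n(\mP,\mQ,k)\,\|\,\mQ^{\otimes n\times n}\bigr)+1=\bE_{S,S'\sim\mathcal{U}_{k,n}}\Bigl[\bigl(1+\chi^2(\mP\|\mQ)\bigr)^{|S\cap S'|^2}\Bigr].$$
Thus everything reduces to bounding $\Phi_n(\rho):=\bE\bigl[(1+\rho)^{W^2}\bigr]-1$ with $\rho:=\chi^2(\mP\|\mQ)$ and $W:=|S\cap S'|\sim\textnormal{Hypergeometric}(n,k,k)$, and in particular the bound depends on $(\mP,\mQ)$ only through $\rho$. (This is also why the claim could instead be deduced by comparison: since $r^2\le 2\binom r2+r$, $\Phi_n(\rho)$ is at most $(1+\rho)^{k}$ times the quantity $\bE[(1+3\rho)^{\binom W2}]$ that governs the information-theoretic lower bound for planted dense subgraph in \cite{hajek2015computational}, so one may transfer that second moment estimate with $\chi^2(\mP\|\mQ)$ playing the role of the PDS signal.)

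To bound $\Phi_n(\rho)$ directly I would split $\sum_{r\ge1}\bP[W=r]\bigl((1+\rho)^{r^2}-1\bigr)$ at the level $\rho r^2=1$. On the bulk $\rho r^2\le1$, using $(1+\rho)^{r^2}-1\le 2\rho r^2$ and $\bE[W^2]=\textnormal{Var}(W)+(\bE W)^2\le \tfrac{k^2}{n}+\tfrac{k^4}{n^2}$ (the hypergeometric here has $\textnormal{Var}(W)\le\bE W$), the contribution is at most $2\rho\bigl(\tfrac{k^2}{n}+\tfrac{k^4}{n^2}\bigr)$; since $\tfrac1n\log(\tfrac{en}{k})\le\tfrac{n}{k^2}$ for all $k\le n$, both $\rho\tfrac{k^2}{n}$ and $\rho\tfrac{k^4}{n^2}$ are at most the ratio $\rho/\bigl(\tfrac1n\log(\tfrac{en}{k})\wedge\tfrac{n^2}{k^4}\bigr)$, so the bulk is $O(1)$ times that ratio. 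On the tail $\rho r^2>1$, bound $\bP[W=r]\le\binom{k}{r}(k/n)^r\le(ek^2/(rn))^r$ and $(1+\rho)^{r^2}\le e^{\rho r^2}$, so the $r$-th term is $\exp(r\,h(r))$ with $h(r)=\log\bigl(ek^2/(rn)\bigr)+\rho r$; $h$ is convex, hence maximized at an endpoint of the tail range, and the hypothesis $\rho<\tfrac1{16e}\bigl(\tfrac1n\log(\tfrac{en}{k})\wedge\tfrac{n^2}{k^4}\bigr)$ forces $h(1/\sqrt\rho)$ and (whenever the range is nonempty) $h(k)$ to be below a negative absolute constant. The tail sum is then dominated by a geometric series of ratio $<1$ starting near $r\approx1/\sqrt\rho$, so it is at most $C\exp(-c/\sqrt\rho)\le C\exp(-c/\sqrt t)$ where $t$ is the ratio (using $\rho=t\cdot(\text{threshold})\le t$). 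Combining the two pieces gives $\chi^2\le\tau_0(t)$ for an explicit $\tau_0$ with $\tau_0(0^+)=0$, whence $\TV\le\tfrac12\sqrt{\tau_0(t)}=:\tau(t)$.

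The main obstacle is the tail term: although $W$ concentrates around $\tfrac{k^2}{n}$, the expectation $\bE[(1+\rho)^{W^2}]$ is driven by the rare large-overlap events (up to $S=S'$), and it is precisely the moderate deviations of $W$ that produce the $\tfrac1n\log(\tfrac{en}{k})$ piece of the threshold while the second moment $(\bE W)^2$ produces the $\tfrac{n^2}{k^4}$ piece. The delicate step is checking that the per-term exponent $h(r)$ is negative with a uniform margin over all $r\in\{1,\dots,k\}$ in the tail range, which is exactly where the explicit constant $\tfrac1{16e}$ and the convexity of $h$ are used.
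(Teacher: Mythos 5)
Your proposal is correct, and its first half is identical to the paper's proof: the mixture/Fubini computation giving $\chi^2\bigl(\mathcal{M}_n(\mP,\mQ,k)\,\|\,\mQ^{\otimes n\times n}\bigr)+1=\bE_{S,T}\bigl[(1+\rho)^{|S\cap T|^2}\bigr]$ with $\rho=\chi^2(\mP\|\mQ)$, followed by Cauchy--Schwarz to pass from $\chi^2$ to total variation, is exactly what the paper does (mirroring the computation in Lemma \ref{lem:hidingentries}). Where you genuinely diverge is the key technical step: the paper controls $\bE[\exp(\rho H^2)]$ for $H\sim\textnormal{Hypergeometric}(n,k,k)$ by citing Lemma \ref{lem:mgfhyp} (Lemma 6 of \cite{hajek2015computational}) as a black box, whereas you re-derive an equivalent estimate from scratch via the split at $\rho r^2=1$, using $\bE[W^2]\le \frac{k^2}{n}+\frac{k^4}{n^2}$ on the bulk and $\bP[W=r]\le(ek^2/(rn))^r$ together with convexity of $h(r)=\log(ek^2/(rn))+\rho r$ on the tail. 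Your endpoint checks do go through with the stated constant: the tail range is nonempty only if $\rho k^2>1$, which combined with $\rho<\frac{1}{16e}\cdot\frac{n^2}{k^4}$ forces $k<n/(4\sqrt{e})$ and hence $h(k)\le\log(ek/n)+\rho k<-0.8$ (using $\rho k\le\frac{1}{16e}\cdot\frac{k}{n}\log(en/k)\le\frac{1}{16e}$), while $h(1/\sqrt{\rho})<\log(\sqrt{e}/4)+\frac{1}{4\sqrt{e}}<-0.7$; the bulk terms are indeed dominated by the ratio $t$ since $\frac{k^2}{n^2}\log(en/k)\le 1$ and $\frac{k^4}{n^2}\cdot\frac{n^2}{k^4}=1$, so $\chi^2\le Ct+C'e^{-c/\sqrt{t}}$ and $\tau(t)=\frac{1}{2}\sqrt{Ct+C'e^{-c/\sqrt{t}}}$ is valid. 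What your route buys is a self-contained argument with an explicit $\tau$, and it incidentally shows the theorem holds as literally stated with the $\frac{1}{n}\log(en/k)$ threshold (which is what the statement uses, whereas the cited lemma is phrased with $\frac{1}{k}\log(en/k)$); what the paper's route buys is brevity, delegating the moderate-deviation analysis of the hypergeometric overlap to \cite{hajek2015computational}.
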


To prove this, we will need the following lemma of \cite{hajek2015computational} bounding the moment generating function of a hypergeometric random variable squared.

\begin{lemma}[Lemma 6 in \cite{hajek2015computational}] \label{lem:mgfhyp}
There exists a function $\tau_1 : \mathbb{R}_{> 0} \to \mathbb{R}_{> 0}$ satisfying that $\lim_{t \to 0^+} \tau_1(t) = 1$ such that for any $k \le n$ the following holds: if $H \sim \textnormal{Hypergeometric}(n, k, k)$ and $\lambda = \kappa \left( \frac{1}{k} \log \left( \frac{en}{k} \right) \wedge \frac{n^2}{k^4} \right)$ where $0 < \kappa < \frac{1}{16e}$ then
$$\bE\left[ \exp\left( \lambda H^2 \right) \right] \le \tau_1(\kappa)$$
\end{lemma}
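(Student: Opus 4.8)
The plan is to pass from $\bE[\exp(\lambda H^2)]$ to the upper tail of $H$ and then split according to whether $s$ is near the mean $\mu := \bE[H] = k^2/n$ or in the large-deviation regime. Writing $\lambda = \kappa b$ with $b = \tfrac1k\log(en/k) \wedge \tfrac{n^2}{k^4}$, I would use three elementary ``budget'' consequences of this scaling, valid for every $n$ and $k\le n$: $\lambda\mu^2 \le \kappa$ (since $\lambda \le \kappa n^2/k^4$), $\lambda\mu \le \kappa$ (since $\lambda\mu \le \kappa\cdot\tfrac kn\log(en/k)$ and $x(1-\log x) \le 1$ on $(0,1]$), and $\lambda k \le \kappa\log(en/k)$ (since $\lambda \le \tfrac\kappa k\log(en/k)$). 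I would also use the hypergeometric tail bound obtained from a union bound over the $s$-element subsets of the $k$ marked elements,
\[
\bP[H\ge s] \;\le\; \binom ks\frac{k(k-1)\cdots(k-s+1)}{n(n-1)\cdots(n-s+1)} \;\le\; \binom ks(k/n)^s \;\le\; (e\mu/s)^s \qquad (1\le s\le k).
\]
If $k \ge n/(4e)$ there is nothing to do, since $H\le k$ forces $\exp(\lambda H^2) \le \exp(\lambda k^2) \le \exp(\kappa n^2/k^2) \le \exp(16e^2\kappa)$. So assume $k < n/(4e)$; then $\mu < \tfrac k{4e}$ and $\log(en/k) > 1 + \log(2e) > 2$.

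Next, summation by parts applied to the nondecreasing map $s\mapsto\exp(\lambda s^2)$ gives
\[
\bE[\exp(\lambda H^2)] = 1 + \sum_{s=1}^k\bigl(\exp(\lambda s^2) - \exp(\lambda(s-1)^2)\bigr)\bP[H\ge s] \le 1 + 2\lambda\sum_{s=1}^k s\exp(\lambda s^2)\bP[H\ge s],
\]
using $\exp(\lambda s^2) - \exp(\lambda(s-1)^2) \le \lambda(2s-1)\exp(\lambda s^2)$. Setting $s^\star = \lceil 2e\mu\rceil$ (note $s^\star < k$ here), I would split the sum at $s^\star$. For the bulk, bound $\exp(\lambda s^2) \le \exp(\lambda(s^\star)^2)$ and use $\sum_{s\ge1}s\bP[H\ge s] = \tfrac12\bE[H(H+1)] = \tfrac12(\mathrm{Var}(H)+\mu^2+\mu) \le \tfrac12(\mu^2+2\mu)$, since $\mathrm{Var}(H) = \mu\cdot\tfrac{(n-k)^2}{n(n-1)} \le \mu$. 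Hence the bulk part of $2\lambda\sum s\exp(\lambda s^2)\bP[H\ge s]$ is at most $\exp(\lambda(s^\star)^2)(\lambda\mu^2 + 2\lambda\mu) \le 3\kappa\exp(\lambda(s^\star)^2)$. When $\mu \ge \tfrac1{2e}$ this is $\le 3\kappa\exp(16e^2\kappa)$ because $s^\star \le 4e\mu$ gives $\lambda(s^\star)^2 \le 16e^2\lambda\mu^2 \le 16e^2\kappa$; when $\mu < \tfrac1{2e}$ one has $s^\star = 1$, and one instead keeps the $\lambda\mu$ factor, bounding the bulk by $3\lambda\mu\exp(\lambda) \le 3\kappa\,(\tfrac kn\log(en/k))\,\exp(\tfrac\kappa k\log(en/k))$, which is $O(\kappa)$ uniformly by the elementary one-variable estimate $y\,e^{1-(1-\kappa)y}\le (1-\kappa)^{-1}$ for $y=\log(en/k)\ge1$. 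Thus the bulk contributes $1 + O(\kappa)$.

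The tail $s^\star < s\le k$ is the crux. There $s > 2e\mu$, so $(e\mu/s)^s$ is genuinely small, and the general term of $2\lambda\sum_{s>s^\star}s\exp(\lambda s^2)\bP[H\ge s]$ is at most $\exp(\psi(s))$ with $\psi(s) = \lambda s^2 + s\log(e\mu/s) + \log(2\lambda s)$. I would show that $\psi$ is strictly decreasing on $(s^\star,k]$ with $\psi'\le -c<0$ there, so that $\sum_{s>s^\star}\exp(\psi(s)) \le \exp(\psi(s^\star+1))/(1-e^{-c})$, and that $\exp(\psi(s^\star+1)) = O(\kappa)$. The decay is read off from $\psi'(s) = 2\lambda s + \log(\mu/s) + 1/s$, which is a unimodal function of $s$, so its maximum on $(s^\star,k]$ is at an endpoint; it is negative by a definite amount at $s^\star+1$ (because $2\lambda(s^\star+1) = O(\kappa)$, or $=4\lambda$ dominated via $\lambda\mu^2\le\kappa$ when $s^\star=1$, while $\log(\mu/s) < -\log(2e)$) and at $k$ (because $\psi'(k) \le 2\kappa\log(en/k) - \log(n/k) + 1 = (1+2\kappa) - (1-2\kappa)\log(n/k) \le 2\kappa(2+\log2) - \log 2 < 0$, using $\log(n/k)>1+\log2$ and $\kappa<\tfrac1{16e}$). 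The left-endpoint value $\exp(\psi(s^\star+1))$ is $O(\kappa)$ by the same bookkeeping: $\lambda(s^\star+1)^2 \le 36e^2\lambda\mu^2\le 36e^2\kappa$ when $\mu\ge\tfrac1{2e}$, and when $\mu<\tfrac1{2e}$ one computes $\exp(\psi(2)) = e^2\lambda\mu^2\exp(4\lambda)$ and bounds it via $\lambda\mu^2 = \kappa\tfrac{k^3}{n^2}\log(en/k)$ together with $\exp(4\lambda)=(en/k)^{4\kappa/k}\le(en/k)^{2\kappa}$. The main obstacle is precisely this: one must verify, uniformly over all $n$ and all $k\le n$, that the exponents $\psi(s^\star+1)$, $\psi(k)$ and the slope $\psi'$ on the tail are negative (or $O(\kappa)$) by absolute amounts — which is exactly where the numerical hypothesis $\kappa<\tfrac1{16e}$ is spent. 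The finitely many small $n$ (below the thresholds at which ``$n$ large'' is invoked above) are trivial, since then $H\le k\le n$ is bounded and $\lambda k^2 \le \kappa n^2/k^2$ is a bounded multiple of $\kappa$. Collecting the bulk bound $1+O(\kappa)$, the tail bound $O(\kappa)$, and the trivial cases yields $\bE[\exp(\lambda H^2)] \le \tau_1(\kappa)$ with $\tau_1(\kappa)\to 1$ as $\kappa\to 0^+$.
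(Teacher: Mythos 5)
The paper does not actually prove this lemma: it is imported verbatim as Lemma~6 of \cite{hajek2015computational}, so there is no in-paper argument to compare against. Your proposal is a legitimate self-contained derivation, and it is in the same spirit as the original proof in \cite{hajek2015computational}, which likewise rests on the union-bound tail estimate $\bP[H\ge s]\le\binom{k}{s}(k/n)^s\le(e\mu/s)^s$ and a split between a near-mean regime and a large-deviation regime; your Abel-summation bookkeeping and the explicit three ``budget'' inequalities $\lambda\mu^2\le\kappa$, $\lambda\mu\le\kappa$, $\lambda k\le\kappa\log(en/k)$ make transparent exactly how each branch of the minimum in the definition of $\lambda$ is spent (the $n^2/k^4$ branch controls $\lambda\mu^2$ and hence the bulk and the dense case $k\ge n/(4e)$; the $\tfrac1k\log(en/k)$ branch controls $\lambda k$ and hence the slope at $s=k$). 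I spot-checked the places you flag as the crux and they do close with the stated constant: $\psi'(k)\le(1+2\kappa)-(1-2\kappa)\log(n/k)<-\tfrac12$ since $\log(n/k)>\log(4e)$ and $\kappa<\tfrac{1}{16e}$; $\psi'(s^\star+1)\le 12e\kappa-\log(2e)+\tfrac12<-0.4$ when $\mu\ge\tfrac1{2e}$; and in the sparse case $e^{\psi(2)}=e^{2}\lambda\mu^{2}e^{4\lambda}\le e^{2}\kappa\,\tfrac{k^{3}}{n^{2}}\log(en/k)\,(en/k)^{2\kappa}=O(\kappa)$ uniformly because $n>2ek^{2}$ there. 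Two cosmetic points: the assertion ``$s^\star<k$'' fails for $k=1$ (where $s^\star=1=k$), but then the tail range is empty, so nothing breaks; and the variance identity $\mathrm{Var}(H)=\mu\,\tfrac{(n-k)^{2}}{n(n-1)}\le\mu$ that you invoke for the bulk is indeed valid for all $1\le k\le n$. Modulo writing out these finitely many endpoint verifications in full, the argument is correct and yields $\tau_1(\kappa)=1+O(\kappa)$ as required.
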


Using this upper bound, we now can prove the information-theoretic lower bounds for submatrix detection in Theorem \ref{thm:itlowerbounds}.

\begin{proof}[Proof of Theorem \ref{thm:itlowerbounds}]
Let $f : X \to [0, \infty)$ be the Radon-Nikodym derivative $f = \frac{d\mathcal{P}}{d\mathcal{Q}}$. Observe that $\mathcal{M}_n(\mP, \mQ, k)$ can be written as the mixture
$$\mathcal{M}_n(\mP, \mQ, k) = \binom{n}{k}^{-1} \sum_{S \subseteq [n] : |S| = k} \mathcal{R}_S$$
where $\mathcal{R}_S$ is the distribution of $n \times n$ matrices $M \in X^{n \times n}$ with independent entries such that $M_{ij} \sim \mP$ if $i, j \in S$ and $M_{ij} \sim \mQ$ otherwise. Note that $\mathcal{R}_S$ is therefore absolutely continuous with respect to $\mQ^{\otimes n \times n}$ with Radon-Nikodym derivative $\frac{d \mathcal{R}_S}{d\mQ^{\otimes n \times n}}(M) = \prod_{i, j \in S} f(M_{ij})$ for each $M \in X^{n \times n}$. It follows that $\mathcal{M}_n(\mP, \mQ, k)$ is also absolutely continuous with respect to $\mQ^{\otimes n \times n}$ with Radon-Nikodym derivative
$$\frac{d\mathcal{M}_n(\mathcal{P}, \mathcal{Q}, k)}{d\mathcal{Q}^{\otimes n \times n}} (M) =  \binom{n}{k}^{-1} \sum_{S \subseteq [n] : |S| = k} \frac{d \mathcal{R}_S}{d\mQ^{\otimes n \times n}}(M) = \bE_{S \sim \mathcal{U}_{k, n}} \left[ \prod_{i, j \in S} f(M_{ij}) \right]$$
for each $x \in X^{n \times n}$ where $\mathcal{U}_{k, n}$ is the uniform distribution on $k$-subsets of $[n]$. By Fubini's theorem,
\begin{align*}
&\chi^2\left( \mathcal{M}_n(\mathcal{P}, \mathcal{Q}, k) \, \| \, \mathcal{Q}^{\otimes n \times n} \right) + 1 \\
&\quad \quad = \bE_{M \sim \mathcal{Q}^{\otimes n \times n}} \left[ \left( \frac{d\mathcal{M}_n(\mathcal{P}, \mathcal{Q}, k)}{d\mathcal{Q}^{\otimes n \times n}} (M) \right)^2 \right] \\
&\quad \quad = \bE_{M \sim \mathcal{Q}^{\otimes n \times n}} \left[ \bE_{S \sim \mathcal{U}_{k, n}} \left[ \prod_{i, j \in S} f(M_{ij}) \right] \cdot \bE_{T \sim \mathcal{U}_{k, n}} \left[ \prod_{i, j \in S} f(M_{ij}) \right] \right] \\
&\quad \quad = \bE_{S, T \sim \mathcal{U}_{k, n}} \left[ \bE_{M \sim \mathcal{Q}^{\otimes n \times n}} \left[ \left( \prod_{i, j \in S} f(M_{ij}) \right) \left( \prod_{i, j \in T} f(M_{ij}) \right) \right] \right] \\
&\quad \quad = \bE_{S, T \sim \mathcal{U}_{k, n}} \left[ \prod_{i, j \in S \cap T} \bE_{M_{ij} \sim \mathcal{Q}}\left[f(M_{ij})^2\right] \prod_{(i, j) \in S^2 \cup T^2 - (S \cap T)^2} \bE_{M_{ij} \sim \mathcal{Q}}\left[f(M_{ij}) \right] \right] \\
&\quad \quad = \bE_{S, T \sim \mathcal{U}_{k, n}} \left[ (1 + \chi^2(\mathcal{P} \| \mathcal{Q}))^{|S \cap T|^2} \right]
\end{align*}
where the last equality holds since $\bE_{M_{ij} \sim \mathcal{Q}}[f(M_{ij})] = 1$ and $1 + \chi^2(\mathcal{P} \| \mathcal{Q}) = \bE_{M_{ij} \sim \mathcal{Q}}[f(M_{ij})^2]$. Note that $H = |S \cap T| \sim \text{Hypergeometric}(n, k, k)$. Let $\tau_1$ be the function in Lemma \ref{lem:mgfhyp}. The given bounds on $\chi^2(\mP \| \mQ)$ imply that we can apply Lemma \ref{lem:mgfhyp} with $\lambda = \chi^2(\mP \| \mQ)$. Combining this with Cauchy-Schwarz and the fact that $1 + \chi^2(\mathcal{P} \| \mathcal{Q}) \le \exp\left( \chi^2(\mathcal{P} \| \mathcal{Q}) \right)$ yields that
\begin{align*}
2 \cdot \TV\left( \mathcal{M}_n(\mathcal{P}, \mathcal{Q}, k), \, \mathcal{Q}^{\otimes n \times n} \right)^2 &\le \chi^2\left( \mathcal{M}_n(\mathcal{P}, \mathcal{Q}, k) \, \| \, \mathcal{Q}^{\otimes n \times n} \right) \\
&= \bE_{S, T \sim \mathcal{U}_{k, n}} \left[ (1 + \chi^2(\mathcal{P} \| \mathcal{Q}))^{|S \cap T|^2} \right] - 1 \\
&\le \bE\left[ \exp\left( H^2 \cdot \chi^2(\mathcal{P} \| \mathcal{Q}) \right) \right] - 1 \\
&\le \tau_1 \left( \frac{\chi^2( \mP \| \mQ)}{\frac{1}{n} \log \left( \frac{en}{k} \right) \wedge \frac{n^2}{k^4}} \right) - 1
\end{align*}
Setting $\tau = \sqrt{\frac{1}{2}(\tau_1 - 1)}$ which satisfies $\lim_{t \to 0^+} \tau(t) = 0$ completes the proof of the theorem.
\end{proof}

Now using the fact that the minimum Type I$+$II error of a hypothesis testing problem between $\mL_0$ and $\mL_1$ is $1 - \TV(\mL_0, \mL_1)$, we arrive at the following corollary providing a regime in which submatrix detection is statistically impossible.

\begin{corollary} \label{cor:itlowerbounds}
Suppose that $\mP$ and $\mQ$ are probability distributions on a measurable space $(X,\mathcal{B})$ where $\mP$ is absolutely continuous with respect to $\mQ$. If $\chi^2( \mP\| \mQ)$ is finite and satisfies that
$$\chi^2( \mP \| \mQ) = o\left( \frac{1}{k} \log \left( \frac{n}{k} \right) \wedge \frac{n^2}{k^4} \right)$$
then any test $\phi : X^{n \times n} \to \{0, 1\}$ has an asymptotic Type I$+$II error of at least one on $\textsc{ssd}(n, k, \mP, \mQ)$.
\end{corollary}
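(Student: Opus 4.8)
The plan is to read off Corollary \ref{cor:itlowerbounds} directly from Theorem \ref{thm:itlowerbounds}. Note that $\textsc{ssd}(n,k,\mP,\mQ)$ is a simple-versus-simple testing problem with $H_0$-law $\mathcal{L}_0 = \mQ^{\otimes n \times n}$ and $H_1$-law $\mathcal{L}_1 = \mathcal{M}_n(\mP,\mQ,k)$, so by the Neyman--Pearson lemma the likelihood-ratio test attains, and no test beats, the minimum Type I$+$II error, which equals $1 - \TV(\mathcal{L}_0,\mathcal{L}_1)$ (the fact recalled in the paragraph preceding the corollary). Thus for every test $\phi : X^{n\times n} \to \{0,1\}$ and every $n$,
\[
\bP_{H_0}[\phi = 1] + \bP_{H_1}[\phi = 0] \ \ge\ 1 - \TV\big(\mathcal{M}_n(\mP,\mQ,k),\, \mQ^{\otimes n \times n}\big),
\]
so it suffices to show that the total variation on the right tends to $0$; taking the $\limsup$ over $n$ then yields that the asymptotic Type I$+$II error of $\phi$ is at least $1$ (it is also trivially at most $2$), which is the claim.

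To obtain the vanishing of the total variation I would invoke Theorem \ref{thm:itlowerbounds}. The hypothesis $\chi^2(\mP\|\mQ) = o\big(\tfrac1k\log(n/k) \wedge \tfrac{n^2}{k^4}\big)$ guarantees that for all sufficiently large $n$ the quantity $\chi^2(\mP\|\mQ)$ lies below the explicit threshold required by the theorem, so the theorem applies and gives
\[
\TV\big(\mathcal{M}_n(\mP,\mQ,k),\, \mQ^{\otimes n\times n}\big) \ \le\ \tau\!\left( \frac{\chi^2(\mP\|\mQ)}{\tfrac1n\log(en/k)\wedge\tfrac{n^2}{k^4}} \right)
\]
for the function $\tau$ furnished by Theorem \ref{thm:itlowerbounds} with $\lim_{t\to 0^+}\tau(t)=0$. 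The same asymptotic hypothesis forces the argument of $\tau$ to tend to $0$, hence the right-hand side tends to $0$. Combining with the displayed lower bound on the Type I$+$II error of $\phi$ completes the proof.

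Essentially all of the substantive work --- the $\chi^2$-tensorization identity reducing the problem to $\bE[(1+\chi^2(\mP\|\mQ))^{|S\cap T|^2}]$ and the hypergeometric moment-generating-function bound of Lemma \ref{lem:mgfhyp} --- is already carried out inside Theorem \ref{thm:itlowerbounds}, so I do not anticipate any genuine obstacle here. The only point needing a line of care is the bookkeeping that turns the corollary's $o(\cdot)$ hypothesis into both the finite-$n$ threshold of the theorem and the statement that $\tau$'s argument vanishes: this amounts to observing that the scale $\tfrac1k\log(n/k)\wedge\tfrac{n^2}{k^4}$ appearing in the hypothesis and the scale appearing inside $\tau$ have identical $n^2/k^4$ components and logarithmic factors that agree up to constants (and the harmless replacement of $n/k$ by $en/k$) in the regime of interest, so that being $o$ of one is being $o$ of the other.
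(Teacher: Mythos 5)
Your proposal is correct and is essentially the paper's own derivation: the paper obtains the corollary in one line by combining Theorem \ref{thm:itlowerbounds} with the fact that the minimum Type I$+$II error equals $1 - \TV\left( \mathcal{M}_n(\mP, \mQ, k), \mQ^{\otimes n \times n} \right)$, exactly as you do. The only caveat, which you implicitly (and correctly) smooth over, is that the scale $\frac{1}{n} \log \left( \frac{en}{k} \right)$ in the statement of Theorem \ref{thm:itlowerbounds} is a typo for $\frac{1}{k} \log \left( \frac{en}{k} \right)$ (as its proof via Lemma \ref{lem:mgfhyp} shows), and only with this correction does the corollary's $o\left( \frac{1}{k} \log \left( \frac{n}{k} \right) \wedge \frac{n^2}{k^4} \right)$ hypothesis imply both the finite-$n$ threshold and the vanishing of the argument of $\tau$.
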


Note that this corollary implies that if $(\mP, \mQ)$ is in $\pr{uc-c}$ and satisfies that $\chi^2(\mP \| \mQ) = O(\SKL(\mP, \mQ))$, then $\pr{ssd}(n, k, \mP, \mQ)$ is information-theoretically impossible if
$$\SKL(\mP, \mQ) = o\left( \frac{1}{k} \log \left( \frac{n}{k} \right) \wedge \frac{n^2}{k^4} \right)$$
which matches the bounds in Section \ref{sec:completephasediagram}.

\subsection{Search Test Statistic}
\label{sec:searchtest}

In this section, we give a simple search test statistic showing statistical achievability. Given a computable pair of distributions $(\mP, \mQ)$ over the measurable space $(X, \mathcal{B})$ and a matrix $M \in X^{n \times n}$, define
$$T_{\text{search}}(M) = \max_{S, T \subseteq [n] : |S| = |T| = k} \left( \frac{1}{k^2} \sum_{i \in S} \sum_{j \in T} \log \frac{d\mP}{d\mQ}(M_{ij}) \right)$$
Note that $T_{\text{search}}$ can be computed in $O(n^{2k} \cdot \mathcal{T})$ time where the Radon-Nikodym derivative $\frac{d\mP}{d\mQ}(M_{ij})$ can be evaluated in $O(\mathcal{T})$ time. We now show that thresholding this statistics solves the asymmetric detection problem $\textsc{asd}$ given sufficient LDPs for the LLR under each of $\mQ$ and $\mP$. We begin with the sum test $T_{\text{sum}}$.

\begin{proposition}[Search Test] \label{prop:searchtest}
Let $M$ be an instance of $\pr{asd}(n, k, \mP, \mQ)$ and suppose there is a $\tau_{\textnormal{search}} \in \left(- \KL(\mQ \| \mP), \KL(\mP \| \mQ) \right)$ with
$$E_\mQ(\tau_{\textnormal{search}}) \ge \frac{2}{k} \log \left( \frac{n}{k} \right) + \omega(k^{-2}) \quad \textnormal{and} \quad E_\mP(\tau_{\textnormal{search}}) = \omega(k^{-2})$$
then $\bP_{H_0}\left[ T_{\textnormal{search}}(M) \ge \tau_{\textnormal{search}} \right] \to 0$ and $\bP_{H_1}\left[ T_{\textnormal{search}}(M) < \tau_{\textnormal{search}} \right] \to 0$ as $n \to \infty$.
\end{proposition}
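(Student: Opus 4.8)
The plan is to run a standard scan-statistic argument, controlling the two error probabilities separately via Chernoff bounds for sums of i.i.d.\ log-likelihood ratios, with a union bound over the $\binom{n}{k}^2$ candidate index pairs handling the Type~I error. Write $L(x) = \log\frac{d\mP}{d\mQ}(x)$; for any fixed pair of $k$-subsets $S, T \subseteq [n]$, the sum $\sum_{i \in S}\sum_{j \in T} L(M_{ij})$ is a sum of $k^2$ i.i.d.\ copies of $L$, distributed as under $\mQ$ when $M$ is drawn from $H_0$ and, if $(S,T)$ is the planted pair of index sets, as under $\mP$ when $M$ is drawn from $H_1$.

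For the Type~II error I would fix the planted row and column index sets $S', T'$, use that $T_{\textnormal{search}}(M) \ge \frac{1}{k^2}\sum_{i\in S'}\sum_{j\in T'} L(M_{ij})$, and apply a Chernoff bound with $\lambda \le 0$:
$$\bP_{H_1}\left[ T_{\textnormal{search}}(M) < \tau_{\textnormal{search}} \right] \le \exp\left( -k^2 \left( \lambda \tau_{\textnormal{search}} - \psi_\mP(\lambda) \right) \right).$$
Since $\tau_{\textnormal{search}} < \KL(\mP \| \mQ) = \psi_\mP'(0)$, the concave function $\lambda \mapsto \lambda\tau_{\textnormal{search}} - \psi_\mP(\lambda)$ has negative derivative at $0$, so its maximizer can be taken in $(-\infty, 0]$ and the supremum over $\lambda \le 0$ equals $E_\mP(\tau_{\textnormal{search}})$. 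Optimizing gives $\bP_{H_1}[T_{\textnormal{search}}(M) < \tau_{\textnormal{search}}] \le \exp(-k^2 E_\mP(\tau_{\textnormal{search}})) \to 0$ because $k^2 E_\mP(\tau_{\textnormal{search}}) = \omega(1)$.

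For the Type~I error, all entries of $M$ are i.i.d.\ $\mQ$. For a fixed $(S,T)$ with $|S| = |T| = k$, a Chernoff bound with $\lambda \ge 0$ together with the optimization defining $E_\mQ$ give $\bP_{H_0}[\frac{1}{k^2}\sum_{i\in S}\sum_{j\in T} L(M_{ij}) \ge \tau_{\textnormal{search}}] \le \exp(-k^2 E_\mQ(\tau_{\textnormal{search}}))$, the maximizer being nonnegative because $\tau_{\textnormal{search}} > -\KL(\mQ \| \mP) = \psi_\mQ'(0)$. A union bound over the $\binom{n}{k}^2 \le \exp(2k\log(en/k))$ pairs then yields
$$\bP_{H_0}\left[ T_{\textnormal{search}}(M) \ge \tau_{\textnormal{search}} \right] \le \binom{n}{k}^2 \exp\left( -k^2 E_\mQ(\tau_{\textnormal{search}}) \right) \le \exp\left( 2k\log\tfrac{en}{k} - k^2 E_\mQ(\tau_{\textnormal{search}}) \right),$$
and the hypothesis $E_\mQ(\tau_{\textnormal{search}}) \ge \frac{2}{k}\log\frac{n}{k} + \omega(k^{-2})$ sends this exponent to $-\infty$, so the Type~I error vanishes. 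Combining the two estimates proves the proposition, and running the identical computation with $k = 1$ recovers Proposition~\ref{prop:maxtest}.

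This argument is largely mechanical; the step I expect to require the most care is keeping track of the signs of the maximizers in the two Legendre transforms — this is exactly why the proposition asks for $\tau_{\textnormal{search}} \in (-\KL(\mQ \| \mP), \KL(\mP \| \mQ))$, the condition under which the one-sided Chernoff bounds collapse to $E_\mQ(\tau_{\textnormal{search}})$ and $E_\mP(\tau_{\textnormal{search}})$ rather than to strictly smaller exponents — and in matching the union-bound cost $\log\binom{n}{k}^2 \approx 2k\log(n/k)$ against the deviation exponent $k^2 E_\mQ(\tau_{\textnormal{search}})$, which is the source of both the factor $2$ and the $\log(n/k)$ term appearing in the hypothesis on $E_\mQ$.
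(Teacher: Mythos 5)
Your proof is correct and follows the paper's argument essentially line for line: a union bound plus a Chernoff bound under $H_0$, a Chernoff bound on the planted block under $H_1$, and the interval condition $\tau_{\textnormal{search}} \in (-\KL(\mQ\|\mP), \KL(\mP\|\mQ))$ guaranteeing that the optimal Legendre multipliers have the right signs so the exponents collapse to $E_\mQ(\tau_{\textnormal{search}})$ and $E_\mP(\tau_{\textnormal{search}})$ (you even keep the $k^2$ in the Type~II exponent, which the paper's final display drops). The only wrinkle, shared with the paper, is bookkeeping in the union bound: you use the correct $\binom{n}{k} \le (en/k)^k$, which leaves a residual $+2k$ in the exponent that the stated $\omega(k^{-2})$ slack does not formally absorb, whereas the paper invokes $\binom{n}{k} \le (n/k)^k$ (false as stated) to make the hypothesis exactly sufficient — in either reading the intended slack is $\omega(k^{-1})$ and nothing downstream is affected.
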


\begin{proof}
By a union bound and Chernoff bound, we have that for if $\lambda \ge 0$ then
\begin{align*}
&\bP_{H_0}\left[ T_{\textnormal{search}}(M) \ge \tau_{\textnormal{search}} \right] \\
&\quad \quad = \sum_{S, T \subseteq [n] : |S| = |T| = k} \bP_{H_0}\left[ \sum_{i \in S} \sum_{j \in T} \log \frac{d\mP}{d\mQ}(M_{ij}) \ge k^2 \cdot \tau_{\textnormal{search}} \right] \\
&\quad \quad \le \binom{n}{k}^2 \cdot \bP_{H_0}\left[ \exp\left( \lambda \cdot \sum_{i, j = 1}^k \log \frac{d\mP}{d\mQ}(M_{ij}) \right) \ge \exp\left( \lambda \cdot k^2 \cdot \tau_{\textnormal{search}} \right) \right] \\
&\quad \quad \le \exp\left( 2 \log \binom{n}{k} + k^2 \cdot \psi_{\mQ}\left(\lambda\right) -k^2 \lambda \cdot \tau_{\textnormal{search}} \right)
\end{align*}
Since $\tau_{\textnormal{search}} \in \left(- \KL(\mQ \| \mP), \KL(\mP \| \mQ) \right)$, we may take $\lambda \ge 0$ so that $\lambda \cdot \tau_{\text{search}} - \psi_{\mQ}(\lambda)$ is arbitrarily close to $E_\mQ(\tau_{\textnormal{search}})$. This implies that
$$\bP_{H_0}\left[ T_{\textnormal{search}}(M) \ge \tau_{\textnormal{search}} \right] \le \exp\left( 2k \cdot \log \left( \frac{n}{k} \right) - k^2 \cdot E_\mQ(\tau_{\textnormal{search}}) \right) = o(1)$$
since $\binom{n}{k} \le \left( \frac{n}{k} \right)^k$. Let $S', T' \subseteq [n]$ be the latent row and column indices of the planted part of $M$ under $H_1$. Now it follows that for $\lambda \le 0$ we have that
\begin{align*}
\bP_{H_1}\left[ T_{\textnormal{search}}(M) < \tau_{\textnormal{search}} \right] &= \bP_{H_1}\left[ \sum_{i \in S'} \sum_{j \in T'} \log \frac{d\mP}{d\mQ}(M_{ij}) < k^2 \cdot \tau_{\textnormal{search}} \right] \\
&\le \bP_{H_1}\left[ \exp\left( \lambda \cdot \sum_{i \in S'} \sum_{j \in T'} \log \frac{d\mP}{d\mQ}(M_{ij}) \right) > \exp\left(  \lambda \cdot k^2 \cdot \tau_{\textnormal{search}} \right) \right] \\
&\le \exp\left( k^2 \cdot \psi_{\mP}\left(\lambda\right) - k^2 \lambda \cdot \tau_{\textnormal{search}} \right)
\end{align*}
Again, since $\tau_{\textnormal{search}} \in \left(- \KL(\mQ \| \mP), \KL(\mP \| \mQ) \right)$, we may take $\lambda \le 0$ so that $\lambda \cdot \tau_{\text{search}} - \psi_{\mP}(\lambda)$ is arbitrarily close to $E_\mP(\tau_{\textnormal{search}})$. Therefore
$$\bP_{H_1}\left[ T_{\textnormal{search}}(M) < \tau_{\textnormal{search}} \right] \le \exp\left( - E_\mP(\tau_{\text{search}}) \right) = o(1)$$
which proves the desired result.
\end{proof}

We now show that the search test solves $\pr{ssd}(n, k, \mP, \mQ)$ for $\pr{uc-b}$ in $(\mP, \mQ)$ in the parameter regime described in Section \ref{sec:completephasediagram}.

\begin{corollary}(Statistically Achievability for $\pr{uc-b}$)
Suppose that $(\mP, \mQ)$ is a computable pair in $\pr{uc-b}$. If $\SKL(\mP, \mQ) \ge \frac{c}{k} \log \left( \frac{n}{k} \right)$ for a sufficiently large constant $c > 0$, then $T_{\textnormal{search}}$ with $\tau_{\textnormal{search}} = 0$ solves $\pr{ssd}(n, k, \mP, \mQ)$.
\end{corollary}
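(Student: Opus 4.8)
The plan is to invoke Proposition~\ref{prop:searchtest} with the threshold $\tau_{\textnormal{search}} = 0$ and to verify its two hypotheses using the structural consequences of membership in $\pr{uc-b}$ collected in Lemma~\ref{lem:UCB}. Since $(\mP, \mQ)$ is a computable pair, both $\KL(\mP \| \mQ)$ and $\KL(\mQ \| \mP)$ are finite and strictly positive, so $0$ lies in the open interval $\left(-\KL(\mQ \| \mP), \KL(\mP \| \mQ)\right)$ demanded by the proposition, and we are free to take $\tau_{\textnormal{search}} = 0$.

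Next I would lower bound $E_\mP(0)$ and $E_\mQ(0)$. Let $C \ge 1$ be the constant for which $(\mP, \mQ)$ is in $\pr{uc-b}$. Applying part (1) of Lemma~\ref{lem:UCB} with $\tau = -\KL(\mP \| \mQ)$, which lies in $[-2C \cdot \KL(\mP \| \mQ), 0]$ since $C \ge 1$, gives
$$E_\mP(0) = E_\mP\left(\KL(\mP \| \mQ) - \KL(\mP \| \mQ)\right) \ge \frac{\KL(\mP \| \mQ)}{4C}.$$
Symmetrically, part (2) of Lemma~\ref{lem:UCB} with $\tau = \KL(\mQ \| \mP) \in [-2C \cdot \KL(\mQ \| \mP), 2C \cdot \KL(\mQ \| \mP)]$ yields
$$E_\mQ(0) \ge \frac{\KL(\mQ \| \mP)}{4C}.$$
By part (3), $\KL(\mP \| \mQ) = \Theta(\KL(\mQ \| \mP))$, so both of these divergences are $\Theta(\SKL(\mP, \mQ))$; in particular there is a constant $c_0 = c_0(C) > 0$ with $\min\{E_\mP(0),\, E_\mQ(0)\} \ge c_0 \cdot \SKL(\mP, \mQ)$.

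Now I would feed in the hypothesis $\SKL(\mP, \mQ) \ge \frac{c}{k}\log(n/k)$, choosing the constant $c$ large enough that $c_0 c \ge 4$. Then $E_\mQ(0) \ge \frac{4}{k}\log(n/k) = \frac{2}{k}\log(n/k) + \frac{2}{k}\log(n/k)$ and $E_\mP(0) \ge \frac{4}{k}\log(n/k)$, and since $\frac{1}{k}\log(n/k) = \omega(k^{-2})$ whenever $k\log(n/k) \to \infty$ (the complementary regime, in which $k$ and $n$ are of the same order, is degenerate: there $\log(n/k) = O(1)$ makes the hypothesis vacuous or forces $\SKL$ bounded below by a constant, and $\pr{ssd}$ is then trivially solvable from the $n^2$ samples), both conditions $E_\mQ(0) \ge \frac{2}{k}\log(n/k) + \omega(k^{-2})$ and $E_\mP(0) = \omega(k^{-2})$ of Proposition~\ref{prop:searchtest} hold. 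Hence $T_{\textnormal{search}}$ with $\tau_{\textnormal{search}} = 0$ solves $\pr{asd}(n, k, \mP, \mQ)$. Finally, $T_{\textnormal{search}}$ is invariant under independent permutations of the rows and columns of $M$, and applying a uniformly random column permutation turns an $\pr{ssd}$ instance into an $\pr{asd}$ instance with the same pair of marginal laws; therefore the law of $T_{\textnormal{search}}(M)$ is unchanged, and the same test solves $\pr{ssd}(n, k, \mP, \mQ)$.

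The argument is essentially bookkeeping built on Lemma~\ref{lem:UCB} and Proposition~\ref{prop:searchtest}; the only step requiring any care is reconciling the lower-order $\omega(k^{-2})$ terms in the hypotheses of Proposition~\ref{prop:searchtest}, i.e.\ confirming that the slack term $\frac{2}{k}\log(n/k)$ genuinely dominates $k^{-2}$, together with dispensing with the boundary regime where $k$ is comparable to $n$.
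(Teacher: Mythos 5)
Your proposal is correct and follows essentially the same route as the paper: verify the two hypotheses of Proposition \ref{prop:searchtest} at $\tau_{\textnormal{search}} = 0$ via the quadratic lower bounds on $E_\mP(0)$ and $E_\mQ(0)$ from Lemma \ref{lem:UCB}, then use $\SKL(\mP,\mQ) \ge \frac{c}{k}\log(n/k)$ with $c$ large. The extra bookkeeping you add (the explicit use of part (3) of Lemma \ref{lem:UCB}, the boundary regime $k \asymp n$, and the \pr{asd}-to-\pr{ssd} permutation remark) is consistent with the paper's conventions but not part of its proof, which simply asserts $\frac{1}{k}\log(n/k) = \omega(k^{-2})$ and applies the proposition.
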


\begin{proof}
It suffices to verify the lower bounds on $E_\mP$ and $E_\mQ$ in Proposition \ref{prop:searchtest}. Since $(\mP, \mQ)$ is in $\pr{uc-b}$, by Lemma \ref{lem:UCB} we have that
\begin{align*}
E_\mP(0) &\ge \frac{\left( - \KL(\mP \| \mQ) \right)^2}{4C \cdot \KL(\mP \| \mQ)} = \frac{1}{4C} \cdot \KL(\mP \| \mQ) \ge \frac{3}{k} \log \left( \frac{n}{k} \right) \\
E_\mQ(0) &\ge \frac{\left( \KL(\mQ \| \mP) \right)^2}{4C \cdot \KL(\mQ \| \mP)} = \frac{1}{4C} \cdot \KL(\mQ \| \mP) \ge \frac{3}{k} \log \left( \frac{n}{k} \right)
\end{align*}
Since $\frac{1}{k} \log \left( \frac{n}{k} \right) = \omega(k^{-2})$, applying Proposition \ref{prop:searchtest} now proves the corollary.
\end{proof}

\section{The Universality Classes UC-A, UC-B and UC-C}
\label{sec:universalityclasses}

\subsection{Universality Classes UC-A and UC-B}

The universality class $\pr{uc-b}$ is discussed at length in Section 2.1 and 3 of \cite{hajek2017information}, which introduces it as Assumption 2 in the context of their information-theoretic lower bounds for general submatrix recovery. The provide a means to check whether a pair $(\mP, \mQ)$ belonging to an exponential family is in $\pr{uc-b}$ in Appendix B of \cite{hajek2017information}. The discussion below shows that this method also checks membership in $\pr{uc-a}$ and $\pr{uc-c}$.

Recall that a random variable $X$ is sub-Gaussian if there are $a, b \in \mathbb{R}$ with $b > 0$ such that $\log \bE[e^{\lambda X}] \le a + \bE[X] \cdot \lambda + b\lambda^2$ for all $\lambda \in \mathbb{R}$. Given a computable pair $(\mP, \mQ)$, let $L(x) = \log \frac{d\mP}{d\mQ}(x)$ denote its LLR. The inequalities
\begin{align*}
\psi_{\mP}(\lambda) - \KL( \mP \| \mQ) \cdot \lambda &\le C \cdot \KL( \mP \| \mQ) \cdot \lambda^2 \quad \textnormal{for all } \lambda \in [-1, 0] \\
\psi_{\mQ}(\lambda) + \KL( \mQ \| \mP) \cdot \lambda &\le C \cdot \KL( \mQ \| \mP) \cdot \lambda^2 \quad \textnormal{for all } \lambda \in [-1, 1]
\end{align*}
defining $\pr{uc-b}$ are exactly the inequalities required for the two distributions $L(X)$ where $X \sim \mP$ and $L(X)$ where $X \sim \mQ$ to be sub-Guassian, but only required to hold for $\lambda$ in restricted intervals. Thus $\pr{uc-b}$ is weaker than sub-Gaussianity of $L$ under $\mP$ and $\mQ$. We now observe that it similarly holds that $\pr{uc-a}$ is weaker than sub-Gaussianity of $L$ under $\mP$. If the sub-Gaussianity inequality holds for the following interval
$$\psi_{\mP}(\lambda) - \KL( \mP \| \mQ) \cdot \lambda \le C \cdot \KL( \mP \| \mQ) \cdot \lambda^2 \quad \textnormal{for } \lambda \in [0, \Theta(\log n)]$$
then the same argument showing Property 1 in Lemma \ref{lem:UCB} shows that
$$E_\mP\left( (\lambda + 1) \cdot \KL(\mP \| \mQ) \right) = \Omega\left( \KL(\mP \| \mQ) \cdot (\log n)^2 \right)$$
for $\lambda = \Theta(\log n)$. The convexity of $E_\mP$ and the fact that $E_\mP(\KL(\mP \| \mQ)) = 0$ implies that if $n$ is large enough so that $n^{\epsilon} \ge \lambda + 1$, then
$$\frac{E_\mP\left(n^{\epsilon} \cdot \KL(\mP \| \mQ) \right)}{(n^\epsilon - 1) \cdot \KL(\mP \| \mQ)} \ge \frac{E_\mP\left((\lambda + 1) \cdot \KL(\mP \| \mQ) \right)}{\lambda \cdot \KL(\mP \| \mQ)} = \Omega(\log n)$$
and thus the condition needed for $\pr{uc-a}$ holds. It is also shown in \cite{hajek2017information} that $(\mP, \mQ)$ with bounded LLR are in $\pr{uc-b}$.

\begin{lemma}[Lemma 1 in \cite{hajek2017information}] \label{lem:blllrucb}
If $|L| \le B$ for some constant $B > 0$, then $(\mP, \mQ)$ is in $\pr{uc-b}$ with constant $C = e^{5B}$.
\end{lemma}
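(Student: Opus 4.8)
The plan is to establish the two defining inequalities of $\pr{uc-b}$ directly from a second-order Taylor-type bound on the log-moment generating functions $\psi_\mP$ and $\psi_\mQ$, exploiting the boundedness $|L| \le B$. The key observation is that $\psi_\mQ(\lambda) = \log \bE_\mQ[e^{\lambda L}]$ has $\psi_\mQ(0) = 0$, $\psi_\mQ'(0) = \bE_\mQ[L] = -\KL(\mQ \| \mP)$ (note $\bE_\mQ[L] = -\KL(\mQ\|\mP)$ since $L = \log\frac{d\mP}{d\mQ}$), and second derivative $\psi_\mQ''(\lambda) = \mathrm{Var}_{\mQ_\lambda}(L)$ where $\mQ_\lambda$ is the exponentially tilted measure $d\mQ_\lambda \propto e^{\lambda L}\, d\mQ$. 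Since $|L| \le B$ almost surely under both $\mP$ and $\mQ$ (as they are mutually absolutely continuous), this variance is at most $B^2$ for every $\lambda$ — but this naive bound gives a constant proportional to $B^2$, not $e^{5B}$, and more importantly does not yet produce the factor $\KL(\mQ\|\mP)$ on the right-hand side. So the real work is to show $\psi_\mQ''(\lambda) \le C' \cdot \KL(\mQ \| \mP)$ for $\lambda$ in the relevant interval, with $C'$ of the claimed form.

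First I would relate the variance of $L$ under the tilted measure to the KL divergence. The inequality $\mathrm{Var}_\mQ(L) \le$ (something) $\cdot \KL(\mQ\|\mP)$ does not hold in general, but when $|L|\le B$ one has the elementary bound $\mathrm{Var}_\mQ(L) \le \bE_\mQ[L^2] \le B \cdot \bE_\mQ[|L|]$, and one can control $\bE_\mQ[|L|]$ in terms of $\bE_\mQ[L]=-\KL(\mQ\|\mP)$ together with $\bE_\mQ[L\mathbf{1}_{L>0}]$; using $e^{L} = \frac{d\mP}{d\mQ}$ and $L \le e^{L}\cdot|L|$-type estimates valid on the bounded range, one shows $\bE_\mQ[L\mathbf{1}_{L>0}] \le e^B \cdot \KL(\mQ\|\mP)$ or a similar bound. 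For the tilted measure $\mQ_\lambda$ with $\lambda\in[-1,1]$, the Radon-Nikodym derivative $\frac{d\mQ_\lambda}{d\mQ} = e^{\lambda L - \psi_\mQ(\lambda)}$ lies in $[e^{-2B}, e^{2B}]$, so $\mathrm{Var}_{\mQ_\lambda}(L) \le e^{4B}\cdot \mathrm{Var}_\mQ(L)$ up to centering adjustments, again bounded by a constant of order $e^{5B}$ times $\KL(\mQ\|\mP)$. Then integrating $\psi_\mQ''$ twice from $0$ and using $\psi_\mQ(0)=0$, $\psi_\mQ'(0) = -\KL(\mQ\|\mP)$ gives
\[
\psi_\mQ(\lambda) + \KL(\mQ\|\mP)\cdot\lambda \le \frac{\lambda^2}{2}\sup_{|t|\le 1}\psi_\mQ''(t) \le C\cdot \KL(\mQ\|\mP)\cdot\lambda^2
\]
for $\lambda\in[-1,1]$ with $C = e^{5B}$, which is exactly the second $\pr{uc-b}$ inequality. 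The first inequality, for $\psi_\mP$ on $\lambda\in[-1,0]$, follows by the identical argument applied to $\mP$ in place of $\mQ$ (using $\psi_\mP'(0) = \bE_\mP[L] = \KL(\mP\|\mQ)$ and the symmetric bound $\mathrm{Var}_\mP(L) \le C''\cdot\KL(\mP\|\mQ)$), or alternatively via the identity $\psi_\mP(\lambda) = \psi_\mQ(\lambda+1)$ noted in the paper, shifting the interval $[-1,0]$ to $[0,1]$.

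The main obstacle will be the variance-to-KL comparison: getting $\mathrm{Var}_\mQ(L) = O(e^{cB})\cdot\KL(\mQ\|\mP)$ rather than just $O(B^2)$, since it is this step that produces the $\KL$ factor required on the right-hand side of the $\pr{uc-b}$ inequalities and pins down the constant $e^{5B}$. The cleanest route is probably to write $\KL(\mQ\|\mP) = \bE_\mQ[-L] = \bE_\mQ[(e^L - 1) - L] / \text{(correction)}$... more precisely to use that $-L \ge 1 - e^{-(-L)}$-type convexity estimates combined with $|L|\le B$ to sandwich $\bE_\mQ[L^2]$ between a constant multiple of $\KL(\mQ\|\mP)$; on a bounded domain $[-B,B]$ the functions $x^2$ and $(e^{x}-1-x)$ (whose $\mQ$-expectation is $\chi^2$-like) and $(-x)$ are all comparable up to constants depending on $B$, and chasing these comparisons carefully yields the stated $C = e^{5B}$. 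I would carry out that bounded-domain function comparison explicitly as the technical core, then assemble the two Taylor bounds as above. I expect no difficulty in the assembly once the variance bound is in hand.
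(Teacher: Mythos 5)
Your proposal is correct and takes essentially the same route as the source: the paper only cites this lemma from \cite{hajek2017information}, but the identical technique---bounding $\psi''$ under exponential tilting by $e^{O(B)}\,\bE[L^2]$ and comparing $x^2$ with $\phi(x)=e^x-x-1$ on $[-B,B]$, whose $\mQ$-expectation is exactly $\KL(\mQ\|\mP)$ (and symmetrically $\bE_\mP[\phi(-L)]=\KL(\mP\|\mQ)$)---is precisely what appears in the paper's proof of the companion Lemma \ref{lem:ucabounded}. Your assembly gives a constant of order $e^{3B}\le e^{5B}$, which suffices since the $\pr{uc-b}$ inequalities only become weaker as $C$ increases; the only minor sloppiness is the ``$\mathrm{Var}_{\mQ_\lambda}(L)\le e^{4B}\mathrm{Var}_\mQ(L)$'' step, which is cleaner if you simply bound the tilted variance by the tilted second moment, $\psi_\mQ''(\lambda)\le \bE_{\mQ_\lambda}[L^2]\le e^{2B}\bE_\mQ[L^2]$.
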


We now show that the three pairs of interest $\mD_{\pr{bc}}$, $\mD_{\pr{sp}}$ and $\mD_{\pr{gp}}$ introduced in Section \ref{sec:summary} are in $\pr{uc-a}$ and $\pr{uc-b}$. In Sections 2.1 and 3 of \cite{hajek2017information}, it is shown that all three of these pairs lie in $\pr{uc-b}$. Thus it suffices to verify that they lie in $\pr{uc-a}$. Consider $\mD_{\pr{bc}}$ where $\mP = \mN(\mu, 1)$ and $\mQ = \mN(0, 1)$. As shown in Section 2.1 of \cite{hajek2017information}, we have that
$$E_\mP(\theta) = \frac{1}{8} \left( \mu - \frac{2\theta}{\mu} \right)^2$$
Suppose that $\theta = n^{\epsilon} \cdot \KL(\mP \| \mQ) = \frac{1}{2} n^{\epsilon} \cdot \mu^2$ by a standard formula for the KL divergence between two Gaussians. Then it follows that
$$E_\mP(\theta) = \frac{1}{8} \left( \mu - n^{\epsilon} \cdot \mu \right)^2 = \Theta(n^{2\epsilon} \mu^2)$$
Since $n^{2\epsilon} = \omega(\log n)$, it follows that $\mD_{\pr{bc}}$ is in $\pr{uc-a}$. In the following, we use several computations $\KL$ computations to be carried in Section \ref{sec:uc-c}. As shown in \cite{hajek2017information}, if $\mP = \text{Bern}(p)$ and $\mQ = \text{Bern}(q)$ then
$$E_\mP(\theta) = D(\alpha \| p) \quad \text{where } \alpha = \frac{\theta + \log \frac{1 - q}{1 - p}}{\log \frac{p(1 - q)}{q(1 - p)}}$$
where $D( \cdot \| \cdot)$ is the binary entropy function. Letting $\theta = D(p \| q) + \tau$ yields that
$$E_\mP(D(p \| q) + \tau) = D\left( p + \tau \left( \log \frac{p(1 - q)}{q(1 - p)} \right)^{-1} \Big\| p \right)$$
When $p = cq = cn^{-\alpha}$ for some $c > 1$, it is not difficult to verify that if $\tau$ is such that $\tau = \Theta(n^\epsilon \cdot D(p \| q)) = \Theta(n^{\epsilon - \alpha})$ then
$$E_\mP(D(p \| q) + \tau) = \Theta\left( n^{\epsilon-\alpha} \log n \right)$$
and thus $\mD_{\pr{sp}}$ is in $\pr{uc-a}$. Furthermore, if $p = n^{-\alpha} + \Theta(n^{-\gamma})$ and $q = n^{-\alpha}$ where $\gamma > \alpha > 0$, then $D(p \| q) = \Theta(n^{\alpha - 2\gamma})$. Observe that $\log \frac{p(1 - q)}{q(1 - p)} = \Theta(n^{\alpha - \gamma})$. Now taking $\tau = \Theta(n^\epsilon \cdot D(p \| q)) = \Theta(n^{\epsilon + \alpha - 2\gamma})$ yields that
$$E_\mP(D(p \| q) + \tau) = \Theta \left( \frac{\tau^2}{p} \left( \log \frac{p(1 - q)}{q(1 - p)} \right)^{-2} \right) = \Theta\left( n^{2\epsilon + \alpha - 2\gamma} \right)$$
and since $n^{\epsilon} = \omega(\log n)$, it follows that $\mD_{\pr{gp}}$ is in $\pr{uc-a}$. We conclude this section by generalizing these computations to show that if a pair $(\mP, \mQ)$ has bounded LLR then it is also in $\pr{uc-a}$. The proof is similar to that of Lemma 1 in \cite{hajek2017information}.

\begin{lemma} \label{lem:ucabounded}
If $|L| \le B$ for some constant $B > 0$, then $(\mP, \mQ)$ is in $\pr{uc-a}$.
\end{lemma}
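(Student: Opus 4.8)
The goal is to verify the defining inequality of $\pr{uc-a}$: fixing $\epsilon \in (0,1)$ and abbreviating $d = d_n = \KL(\mP_n \| \mQ_n)$, I must show $E_{\mP_n}(n^\epsilon d) = \Omega(n^\epsilon d \log n)$. Computability is part of the standing hypotheses (exactly as in Lemma \ref{lem:blllrucb}), so only this large deviation bound requires an argument. The engine is the variational lower bound $E_\mP(\theta) \ge \sup_{\lambda \ge 0}\bigl(\lambda\theta - \psi_\mP(\lambda)\bigr)$, which I will combine with a quadratic upper bound on $\psi_\mP$ whose second order coefficient is proportional to $d$ rather than merely to $B^2$; this proportionality is what keeps the bound non-vacuous across the whole range of $d$, and it is the only place where the boundedness of $L$ is used in an essential way.

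The crux is the preliminary estimate $\bE_\mP[L^2] \le C_1(B)\cdot \KL(\mP \| \mQ)$ for a constant $C_1(B)$. To obtain it, write $f = d\mP/d\mQ = e^L \in [e^{-B}, e^B]$ and $\phi(t) = t\log t - t + 1$, so that $\KL(\mP\|\mQ) = \bE_\mQ[f\log f] = \bE_\mQ[\phi(f)]$ (using $\bE_\mQ[f] = 1$). Since $\phi(1) = \phi'(1) = 0$ and $\phi''(t) = 1/t \ge e^{-B}$ on the interval, Taylor's theorem gives $\phi(t) \ge \tfrac{e^{-B}}{2}(t-1)^2$ there, hence $\KL(\mP\|\mQ) \ge \tfrac{e^{-B}}{2}\chi^2(\mP\|\mQ)$. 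Conversely, $\bE_\mP[L^2] = \bE_\mQ[f(\log f)^2] \le e^{B} K_B\,\bE_\mQ[(f-1)^2] = e^{B} K_B\,\chi^2(\mP\|\mQ)$, where $K_B = \sup_{t \in [e^{-B},e^{B}]}\bigl(\tfrac{\log t}{t-1}\bigr)^2$ is finite because $t \mapsto \tfrac{\log t}{t-1}$ extends continuously to the compact interval. Multiplying the two bounds gives the estimate with $C_1(B) = 2 e^{2B} K_B$.

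With $v := \bE_\mP[L^2] \le C_1 d$ in hand, the rest is a one-shot Chernoff computation. For $\lambda \ge 0$ and $|L| \le B$, the elementary inequality $e^x \le 1 + x + \tfrac{x^2}{2}e^{|x|}$ applied with $x = \lambda L$ gives $e^{\lambda L} \le 1 + \lambda L + \tfrac{\lambda^2 L^2}{2}e^{\lambda B}$; taking $\bE_\mP$ and using $\log(1+u) \le u$ yields $\psi_\mP(\lambda) \le \lambda d + \tfrac{C_1 d}{2}\lambda^2 e^{\lambda B}$. Therefore
\[
E_\mP\bigl(n^\epsilon d\bigr) \;\ge\; d\cdot \sup_{\lambda \ge 0}\Bigl( (n^\epsilon - 1)\lambda - \tfrac{C_1}{2}\lambda^2 e^{\lambda B}\Bigr).
\]
Taking $\lambda = \tfrac{\epsilon}{2B}\log n$, so that $e^{\lambda B} = n^{\epsilon/2}$, the first term equals $\tfrac{\epsilon}{2B}(n^\epsilon - 1)\log n = \Theta(n^\epsilon \log n)$ while the second equals $\tfrac{C_1 \epsilon^2}{8 B^2}(\log n)^2\, n^{\epsilon/2} = o(n^\epsilon \log n)$, so $E_\mP(n^\epsilon d) \ge (1 - o(1))\tfrac{\epsilon}{2B}\, n^\epsilon d \log n = \Omega(n^\epsilon d \log n)$, as required. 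No case split on the size of $d$ is needed: $|L| \le B$ already forces $d \le B$, and if $n^\epsilon d$ exceeds the essential supremum of $L$ under $\mP$ then the supremum above is in fact $+\infty$, so the displayed inequality is valid in every regime.

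The step I expect to be the real obstacle is the second paragraph — bounding $\bE_\mP[L^2]$ by a $B$-dependent constant times $\KL(\mP\|\mQ)$. It is precisely what the general ``sub-Gaussian inequality plus convexity of $E_\mP$'' route sketched earlier in this subsection cannot supply for bounded $L$: that route needs the inequality $\psi_\mP(\lambda) - d\lambda \le C d\lambda^2$ to hold on an interval of length $\Theta(\log n)$ with $C$ \emph{constant}, whereas the bounded-LLR Taylor bound only gives a coefficient of order $e^{\lambda B}$; with a coefficient of constant order the cost term $\Theta(\lambda^2 e^{\lambda B})$ would fail to scale with $d$ and the lower bound would collapse exactly when $\KL(\mP_n\|\mQ_n)$ is polynomially small in $n$. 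Once the linear bound $\bE_\mP[L^2] = O_B(\KL(\mP\|\mQ))$ is available, the direct Chernoff estimate above goes through uniformly in $d$, and everything else is routine.
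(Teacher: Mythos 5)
Your proof is correct, and it follows the same overall strategy as the paper's: establish $\bE_\mP[L^2] = O_B(\KL(\mP\|\mQ))$ using the boundedness of $L$, deduce a quadratic-type upper bound on $\psi_\mP(\lambda)$ whose second-order coefficient is proportional to $\KL(\mP\|\mQ)$ (at the price of an $e^{B\lambda}$ factor), and then take $\lambda \asymp \log n$ so that this factor is only $n^{O(\epsilon)}$ and is dominated. The two places where you deviate are worth noting. First, the paper bounds $\bE_\mP[L^2] \le 2e^{2B}\KL(\mQ\|\mP)$ via $\phi(x)=e^x-x-1$ and then converts $\KL(\mQ\|\mP)$ into $\KL(\mP\|\mQ)$ by invoking Lemma \ref{lem:blllrucb} together with Property 3 of Lemma \ref{lem:UCB}; you instead compare both $\bE_\mP[L^2]$ and $\KL(\mP\|\mQ)$ to $\chi^2(\mP\|\mQ)$ on the compact interval $[e^{-B},e^B]$, which makes the estimate self-contained and avoids routing through $\pr{uc-b}$. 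Second, the paper controls $\psi''_\mP$ uniformly on $[0,\lambda_{\max}]$, integrates twice, and then has to verify that the optimizing $\lambda$ lies in $[0,\lambda_{\max}]$ (hence the bookkeeping with the constant $c'$); your pointwise bound $e^{x} \le 1+x+\tfrac{x^2}{2}e^{|x|}$ followed by $\log(1+u)\le u$ gives the same quadratic bound for all $\lambda \ge 0$ at once, so a single explicit choice $\lambda = \tfrac{\epsilon}{2B}\log n$ suffices and no constraint check is needed. The two arguments buy essentially the same thing; yours is somewhat cleaner and independent of the $\pr{uc-b}$ machinery, while the paper's reuses lemmas it needs anyway.
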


\begin{proof}
First note that if $\lambda \in [0, \lambda_{\max}]$ then
$$\psi''_{\mP}(\lambda) = \frac{\bE_{\mP}[L^2 \cdot \exp(\lambda L)] \cdot \psi_{\mP}(\lambda) - \bE_{\mP}[L \cdot \exp(\lambda L)]^2}{\psi_{\mP}(\lambda)^2} \le \frac{\bE_{\mP}[L^2 \cdot \exp(\lambda L)]}{\bE_{\mP}[\exp(\lambda L)]} \le e^{2B \lambda_{\max}} \cdot \bE_{\mP}[L^2]$$
As in \cite{hajek2017information}, let $\phi(x) = e^x - x - 1$ and note that if $|x| \le B$ then $\frac{1}{2} e^{-B} x^2 \le \phi(x) \le \frac{1}{2} e^B x^2$ since $\phi$ is nonnegative, convex and satisfies $\phi(0) = \phi'(0) = 0$ and $\phi''(x) = e^x \in [e^{-B}, e^B]$ if $|x| \le B$. Therefore we have that
$$\bE_{\mP}[L^2] = \bE_{\mQ} \left[ L^2 \exp(L) \right] \le e^B \cdot \bE_\mQ[L^2] \le 2e^{2B} \cdot \bE_{\mQ}[\phi(L)] = 2e^{2B} \cdot \KL(\mQ \| \mP)$$
Applying Lemma \ref{lem:blllrucb} yields that $(\mP, \mQ)$ is in $\pr{uc-b}$ and thus $\KL(\mQ \| \mP) \le c \cdot \KL(\mP \| \mQ)$ for some $c > 0$ by Property 3 in Lemma \ref{lem:UCB}. Thus $\psi_{\mP}''(\lambda) \le 2c \cdot e^{2B(\lambda_{\max} + 1)} \cdot \KL(\mP \| \mQ)$ for all $\lambda \in [0, \lambda_{\max}]$. Combining this with $\psi_{\mP}'(0) = \KL(\mP \| \mQ)$ and $\psi_{\mP}(0) = 0$ yields that
$$\psi_{\mP}(\lambda) \le \KL(\mP \| \mQ) \cdot \lambda + c \cdot e^{2B(\lambda_{\max} + 1)} \cdot \KL(\mP \| \mQ) \cdot \lambda^2 \quad \text{for all } \lambda \in [0, \lambda_{\max}]$$
This inequality implies that
$$E_{\mP}(\KL(\mP \| \mQ) + \tau) \ge \sup_{\lambda \in [0, \lambda_{\max}]} \left\{ \tau \cdot \lambda - c \cdot e^{2B(\lambda_{\max} + 1)} \cdot \KL(\mP \| \mQ) \cdot \lambda^2 \right\} \ge \frac{\tau^2}{4c \cdot e^{2B(\lambda_{\max} + 1)} \KL(\mP \| \mQ)}$$
where the last inequality holds as long as
$$\frac{\tau}{2c \cdot e^{2B(\lambda_{\max} + 1)} \KL(\mP \| \mQ)} \le \lambda_{\max}$$
Now take $\tau = (n^{\epsilon} - 1) \cdot \KL(\mP \| \mQ)$ and $\lambda_{\max} + 1 = \frac{1}{2B}[\epsilon \log n - \log \log n] + c'$ for some constant $c' > 0$. It follows that
$$\frac{\tau}{2c \cdot e^{2B(\lambda_{\max} + 1)} \KL(\mP \| \mQ)} \le \frac{e^{-2B c'}}{2c} \cdot \log n$$
which is at most $\lambda_{\max}$ for a large enough choice of $c' = c'(B, c) > 0$. Furthermore, substituting this pair $(\tau, \lambda_{\max})$ into the inequality above yields that $E_{\mP}(n^{\epsilon} \cdot \KL(\mP \| \mQ)) = \Omega(n^{\epsilon} \cdot \KL(\mP \| \mQ) \cdot \log n)$, which implies that $(\mP, \mQ)$ is in $\pr{uc-a}$.
\end{proof}

\subsection{Universality Class UC-C}
\label{sec:uc-c}

The condition for $(\mP, \mQ)$ to be in $\pr{uc-c}$ is also weaker than sub-Gaussianity of the LLR. Observe that if the sub-Gaussian inequality
$$\psi_{\mQ}(\lambda) + \KL( \mQ \| \mP) \cdot \lambda \le C \cdot \KL( \mQ \| \mP) \cdot \lambda^2$$
holds for $\lambda = 2$, then $\chi^2(\mP \| \mQ) = \psi_\mQ(2) \le (4C - 2) \cdot \KL(\mQ \| \mP)$ and $(\mP, \mQ)$ is in $\pr{uc-c}$. In light of this, taking $\lambda_{\max} \ge 2$ in the proof of Lemma \ref{lem:ucabounded} also shows that $|L| \le B$ implies that $(\mP, \mQ)$ is in $\pr{uc-c}$. For the sake of the exposition, we now verify directly that the three pairs $\mD_{\pr{bc}}$, $\mD_{\pr{sp}}$ and $\mD_{\pr{gp}}$ are in $\pr{uc-c}$ by explicitly computing their KL and $\chi^2$ divergences.
\begin{enumerate}
\item[($\mD_{\textsc{bc}}$)] If $\mP = \mN(\mu, 1)$ and $\mQ = \mN(0, 1)$ where $\mu = n^{-\alpha}$ for some $\alpha > 0$, then we have that
\begin{align*}
\chi^2(\mP \| \mQ) &= \frac{1}{2} \left( e^{\mu^2} - 1 \right) = \Theta(\mu^2) \\
\SKL(\mP, \mQ) &= \KL(\mQ \| \mP) + \KL(\mP \| \mQ) = \mu^2
\end{align*}
as $\mu \to 0$, by well-known formulas for these divergences.
\item[($\mD_{\textsc{sp}}$)] If $\mP = \text{Bern}(p)$ and $\mQ = \text{Bern}(q)$ where $p = cq = cn^{-\alpha}$ for some constant $c > 1$ and $\alpha > 0$, then we have that
\begin{align*}
\chi^2( \mP \| \mQ ) &= \frac{(p - q)^2}{q(1 - q)} = \frac{(c - 1)^2}{1 - n^{-\alpha}} \cdot n^{-\alpha} = \Theta(n^{-\alpha}) \\
\KL(\mQ \| \mP) &= - n^{-\alpha} \log c - (1 - n^{-\alpha}) \log \left( 1 - \frac{(c - 1)n^{-\alpha}}{1 - n^{-\alpha}} \right) \\
&= -n^{-\alpha} \log c - (1 - n^{-\alpha}) \cdot \left( -\frac{(c - 1)n^{-\alpha}}{1 - n^{-\alpha}} + O(n^{-2\alpha})\right) \\
&= (c - 1 - \log c) n^{-\alpha} + O(n^{-2\alpha}) = \Theta(n^{-\alpha}) \\
\KL(\mP \| \mQ) &= (c^{-1} - 1 - \log c^{-1}) cn^{-\alpha} + O(n^{-2\alpha}) = \Theta(n^{-\alpha})
\end{align*}
\item[($\mD_{\textsc{gp}}$)] If $\mP = \text{Bern}(p)$ and $\mQ = \text{Bern}(q)$ where $p = q + \Theta(n^{-\gamma})$ and $q = n^{-\alpha}$ for some constants $\gamma > \alpha > 0$, then we have that
\begin{align*}
\chi^2( \mP \| \mQ ) &= \frac{(p - q)^2}{q(1 - q)} = \Theta(n^{-2\gamma +\alpha}) \\
\KL(\mQ \| \mP) &= - q \log \left( 1 + \frac{p - q}{q} \right) - (1 - q) \log \left( 1 - \frac{p - q}{1 - q} \right) \\
&= -q \left( \frac{p - q}{q} - \left( \frac{p - q}{q} \right)^2 + O\left( \frac{(p - q)^3}{q^3} \right) \right) \\
&\quad \quad - (1 - q) \left( - \frac{p - q}{1 - q} - \left( \frac{p - q}{1 - q} \right)^2 + O\left( \frac{(p - q)^3}{(1 - q)^3} \right) \right) \\
&= \frac{(p - q)^2}{q(1 - q)} + O(n^{-3\gamma+2\alpha}) = \Theta(n^{-2\gamma + \alpha}) \\
\KL(\mP \| \mQ) &= \Theta(n^{-2\gamma + \alpha})
\end{align*}
\end{enumerate}
These computations verify that all three pairs $(\mP, \mQ)$ are in the universality class $\pr{uc-c}$.

\section{Further Questions}
\label{sec:conc}

This work leaves a number of questions about submatrix detection and planted clique reductions unresolved. The following is an overview of some of these problems.
\begin{itemize}
\item \textbf{Weaker Universality Assumptions:} Can our required lower bound on $E_\mP$ in our main computational lower bounds be relaxed? In other words, is there a reduction from planted clique or another conjecturally hard average-case problem to the general submatrix detection problem for a wider universality class of $(\mP, \mQ)$?
\item \textbf{Other Possible Computational Phase Diagrams:} Outside of our universality classes, are there any natural universality classes $(\mP, \mQ)$ with different phase diagrams that can be characterized through average-case reductions? One example of a pair $(\mP, \mQ)$ outside of our universality classes that we do not show hardness for is $\mP = \text{Bern}(p)$ and $\mQ = \text{Bern}(q)$ where $p = n^{-\alpha}$ and $q = n^{-\beta}$ where $\beta > \alpha$. The graph variant of this submatrix detection problem corresponds to the log-density regime of planted dense subgraph and seems to obey a completely different phase diagram. Algorithms and conjectured hardness for this problem are discussed in \cite{bhaskara2010detecting,chlamtac2012everywhere, chlamtavc2017minimizing,chlamtac2018sherali}.
\item \textbf{Computational Lower Bounds for Submatrix Recovery:} Through similar detection-recovery reductions as in Section 10 of \cite{brennan2018reducibility}, our computational lower bounds for submatrix detection yields computational lower bounds for the general recovery variant. However, $T_{\text{sum}}$ does not translate into a natural recovery algorithm and the computational barrier for recovery appears to be different from that of submatrix detection. This has left a region of the phase diagram with an unknown computational complexity. Semidefinite programming algorithms for recovery under regularity assumptions on $(\mP, \mQ)$ were analyzed in \cite{hajek2016semidefinite} meeting the polynomial time threshold shown in Figure \ref{fig:recoverydiagram}. In a distributionally robust sub-Gaussian variant of submatrix recovery, planted clique lower bounds were shown by \cite{cai2017computational}. The known and open regions of the phase diagram for recovery are shown in Figure \ref{fig:recoverydiagram}.
\end{itemize}

\begin{figure*}[t!]
\centering
\begin{tikzpicture}[scale=0.45]
\tikzstyle{every node}=[font=\footnotesize]
\def\xmin{0}
\def\xmax{16}
\def\ymin{-1}
\def\ymax{11}

\draw[->] (\xmin,\ymin) -- (\xmax,\ymin) node[right] {$\beta$};
\draw[->] (\xmin,\ymin) -- (\xmin,\ymax) node[above] {$\alpha$};

\node at (15, -1) [below] {$1$};
\node at (7.5, -1) [below] {$\frac{1}{2}$};
\node at (0, 0) [left] {$0$};
\node at (0, 10) [left] {$2$};
\node at (0, 5) [left] {$1$};
\node at (0, 3.33) [left] {$\frac{2}{3}$};
\node at (10, -1) [below] {$\frac{2}{3}$};

\filldraw[fill=red!25, draw=red] (7.5, 0) -- (15, 5) -- (10, 3.33) -- (7.5, 0);
\filldraw[fill=cyan!25, draw=blue] (0, 0) -- (7.5, 0) -- (10, 3.33) -- (0, 0);
\filldraw[fill=green!25, draw=green] (0, 0) -- (7.5, 0) -- (15, 5) -- (15, -1) -- (0, -1) -- (0, 0);
\filldraw[fill=gray!25, draw=gray] (0, 0) -- (15, 5) -- (15, 10) -- (0, 10) -- (0, 0);

\node at (4, -0.5) {$\SKL(\mP, \mQ) \asymp 1$};
\node at (12.3, 1.1) {$\SKL(\mP, \mQ) \asymp \frac{n}{k^2}$};
\node at (4, 3) {$\SKL(\mP, \mQ) \asymp \frac{1}{k}$};
\node at (7.5, 9) {information-theoretically impossible};
\node at (12, -0.25) {poly-time};
\node at (6.5, 1.25) {PC-hard};
\node at (11, 3) {open};
\end{tikzpicture}

\caption{Computational and statistical barriers in the recovery variant of $\pr{ssd}(n, k, \mP, \mQ)$ under regularity assumptions on $(\mP, \mQ)$. The red region is conjectured to be computationally hard but no $\textsc{pc}$ reductions showing this hardness are known. Axes are parameterized as $k = \tilde{\Theta}(n^{\beta})$ and $\SKL(\mP, \mQ) = \tilde{\Theta}(n^{-\alpha})$.}
\label{fig:recoverydiagram}
\end{figure*}

\section*{Acknowledgements}

We thank Philippe Rigollet, Yury Polyanskiy, Ankur Moitra, Elchanan Mossel, Jonathan Weed, Frederic Koehler, Enric Boix Adser\`{a}, Austin J. Stromme, Vishesh Jain and Yash Deshpande for inspiring discussions on related topics. This work was supported in part by the grant ONR N00014-17-1-2147.

\bibliography{GB_BIB.bib}

\newcommand{\etalchar}[1]{$^{#1}$}
\begin{thebibliography}{ABBDL10}

\bibitem[AAK{\etalchar{+}}07]{alon2007testing}
Noga Alon, Alexandr Andoni, Tali Kaufman, Kevin Matulef, Ronitt Rubinfeld, and
  Ning Xie.
\newblock Testing k-wise and almost k-wise independence.
\newblock In {\em Proceedings of the thirty-ninth annual ACM symposium on
  Theory of computing}, pages 496--505. ACM, 2007.

\bibitem[ABBDL10]{addario2010combinatorial}
Louigi Addario-Berry, Nicolas Broutin, Luc Devroye, and G{\'a}bor Lugosi.
\newblock On combinatorial testing problems.
\newblock {\em The Annals of Statistics}, 38(5):3063--3092, 2010.

\bibitem[ACV{\etalchar{+}}14]{arias2014community}
Ery Arias-Castro, Nicolas Verzelen, et~al.
\newblock Community detection in dense random networks.
\newblock {\em The Annals of Statistics}, 42(3):940--969, 2014.

\bibitem[AV11]{ames2011nuclear}
Brendan~PW Ames and Stephen~A Vavasis.
\newblock Nuclear norm minimization for the planted clique and biclique
  problems.
\newblock {\em Mathematical programming}, 129(1):69--89, 2011.

\bibitem[BB19]{brennan2019optimal}
Matthew Brennan and Guy Bresler.
\newblock Optimal average-case reductions to sparse pca: From weak assumptions
  to strong hardness.
\newblock {\em arXiv preprint arXiv:1902.07380}, 2019.

\bibitem[BBH18]{brennan2018reducibility}
Matthew Brennan, Guy Bresler, and Wasim Huleihel.
\newblock Reducibility and computational lower bounds for problems with planted
  sparse structure.
\newblock In {\em COLT}, pages 48--166, 2018.

\bibitem[BCC{\etalchar{+}}10]{bhaskara2010detecting}
Aditya Bhaskara, Moses Charikar, Eden Chlamtac, Uriel Feige, and Aravindan
  Vijayaraghavan.
\newblock Detecting high log-densities: an $o(n^{1/4})$ approximation for
  densest $k$-subgraph.
\newblock {\em Proceedings of the forty-second ACM symposium on Theory of
  computing}, pages 201--210, 2010.

\bibitem[BI13]{butucea2013detection}
Cristina Butucea and Yuri~I Ingster.
\newblock Detection of a sparse submatrix of a high-dimensional noisy matrix.
\newblock {\em Bernoulli}, 19(5B):2652--2688, 2013.

\bibitem[BKR{\etalchar{+}}11]{balakrishnan2011statistical}
Sivaraman Balakrishnan, Mladen Kolar, Alessandro Rinaldo, Aarti Singh, and
  Larry Wasserman.
\newblock Statistical and computational tradeoffs in biclustering.
\newblock In {\em NIPS 2011 workshop on computational trade-offs in statistical
  learning}, volume~4, 2011.

\bibitem[BR13a]{berthet2013complexity}
Quentin Berthet and Philippe Rigollet.
\newblock Complexity theoretic lower bounds for sparse principal component
  detection.
\newblock In {\em COLT}, pages 1046--1066, 2013.

\bibitem[BR13b]{berthet2013optimal}
Quentin Berthet and Philippe Rigollet.
\newblock Optimal detection of sparse principal components in high dimension.
\newblock {\em The Annals of Statistics}, 41(4):1780--1815, 2013.

\bibitem[CC18]{candogan2018finding}
Utkan~Onur Candogan and Venkat Chandrasekaran.
\newblock Finding planted subgraphs with few eigenvalues using the schur--horn
  relaxation.
\newblock {\em SIAM Journal on Optimization}, 28(1):735--759, 2018.

\bibitem[CDK12]{chlamtac2012everywhere}
Eden Chlamtac, Michael Dinitz, and Robert Krauthgamer.
\newblock Everywhere-sparse spanners via dense subgraphs.
\newblock In {\em Foundations of Computer Science (FOCS), 2012 IEEE 53rd Annual
  Symposium on}, pages 758--767. IEEE, 2012.

\bibitem[CDM17]{chlamtavc2017minimizing}
Eden Chlamt{\'a}{\v{c}}, Michael Dinitz, and Yury Makarychev.
\newblock Minimizing the union: Tight approximations for small set bipartite
  vertex expansion.
\newblock In {\em Proceedings of the Twenty-Eighth Annual ACM-SIAM Symposium on
  Discrete Algorithms}, pages 881--899. SIAM, 2017.

\bibitem[Che15]{chen2015incoherence}
Yudong Chen.
\newblock Incoherence-optimal matrix completion.
\newblock {\em IEEE Transactions on Information Theory}, 61(5):2909--2923,
  2015.

\bibitem[CLR{\etalchar{+}}17]{cai2017computational}
T~Tony Cai, Tengyuan Liang, Alexander Rakhlin, et~al.
\newblock Computational and statistical boundaries for submatrix localization
  in a large noisy matrix.
\newblock {\em The Annals of Statistics}, 45(4):1403--1430, 2017.

\bibitem[CM18]{chlamtac2018sherali}
E.~{Chlamt{\'a}{\v c}} and P.~{Manurangsi}.
\newblock Sherali-adams integrality gaps matching the log-density threshold.
\newblock {\em arXiv preprint arXiv:1804.07842}, 2018.

\bibitem[CX16]{chen2016statistical}
Yudong Chen and Jiaming Xu.
\newblock Statistical-computational tradeoffs in planted problems and submatrix
  localization with a growing number of clusters and submatrices.
\newblock {\em Journal of Machine Learning Research}, 17(27):1--57, 2016.

\bibitem[DAM15]{deshpande2015asymptotic}
Yash Deshpande, Emmanuel Abbe, and Andrea Montanari.
\newblock Asymptotic mutual information for the two-groups stochastic block
  model.
\newblock {\em arXiv preprint arXiv:1507.08685}, 2015.

\bibitem[DGGP14]{dekel2014finding}
Yael Dekel, Ori Gurel-Gurevich, and Yuval Peres.
\newblock Finding hidden cliques in linear time with high probability.
\newblock {\em Combinatorics, Probability and Computing}, 23(1):29--49, 2014.

\bibitem[DM15]{deshpande2015finding}
Yash Deshpande and Andrea Montanari.
\newblock Finding hidden cliques of size$\sqrt{N/e}$ in nearly linear time.
\newblock {\em Foundations of Computational Mathematics}, 15(4):1069--1128,
  2015.

\bibitem[FK00]{feige2000finding}
Uriel Feige and Robert Krauthgamer.
\newblock Finding and certifying a large hidden clique in a semirandom graph.
\newblock {\em Random Structures and Algorithms}, 16(2):195--208, 2000.

\bibitem[FR10]{feige2010finding}
Uriel Feige and Dorit Ron.
\newblock Finding hidden cliques in linear time.
\newblock In {\em 21st International Meeting on Probabilistic, Combinatorial,
  and Asymptotic Methods in the Analysis of Algorithms (AofA'10)}, pages
  189--204. Discrete Mathematics and Theoretical Computer Science, 2010.

\bibitem[GMZ17]{gao2017sparse}
Chao Gao, Zongming Ma, and Harrison~H Zhou.
\newblock Sparse cca: Adaptive estimation and computational barriers.
\newblock {\em The Annals of Statistics}, 45(5):2074--2101, 2017.

\bibitem[HWX15]{hajek2015computational}
Bruce~E Hajek, Yihong Wu, and Jiaming Xu.
\newblock Computational lower bounds for community detection on random graphs.
\newblock In {\em COLT}, pages 899--928, 2015.

\bibitem[HWX16a]{hajek2016achieving}
Bruce Hajek, Yihong Wu, and Jiaming Xu.
\newblock Achieving exact cluster recovery threshold via semidefinite
  programming.
\newblock {\em IEEE Transactions on Information Theory}, 62(5):2788--2797,
  2016.

\bibitem[HWX16b]{hajek2016semidefinite}
Bruce Hajek, Yihong Wu, and Jiaming Xu.
\newblock Semidefinite programs for exact recovery of a hidden community.
\newblock In {\em Conference on Learning Theory}, pages 1051--1095, 2016.

\bibitem[HWX17]{hajek2017information}
Bruce Hajek, Yihong Wu, and Jiaming Xu.
\newblock Information limits for recovering a hidden community.
\newblock {\em IEEE Transactions on Information Theory}, 63(8):4729--4745,
  2017.

\bibitem[JDP83]{joag1983negative}
Kumar Joag-Dev and Frank Proschan.
\newblock Negative association of random variables with applications.
\newblock {\em The Annals of Statistics}, pages 286--295, 1983.

\bibitem[KBRS11]{kolar2011minimax}
Mladen Kolar, Sivaraman Balakrishnan, Alessandro Rinaldo, and Aarti Singh.
\newblock Minimax localization of structural information in large noisy
  matrices.
\newblock In {\em Advances in Neural Information Processing Systems}, pages
  909--917, 2011.

\bibitem[KXZ16]{krzakala2016mutual}
Florent Krzakala, Jiaming Xu, and Lenka Zdeborov{\'a}.
\newblock Mutual information in rank-one matrix estimation.
\newblock In {\em 2016 IEEE Information Theory Workshop (ITW)}, pages 71--75.
  IEEE, 2016.

\bibitem[KZ14]{koiran2014hidden}
Pascal Koiran and Anastasios Zouzias.
\newblock Hidden cliques and the certification of the restricted isometry
  property.
\newblock {\em IEEE Transactions on Information Theory}, 60(8):4999--5006,
  2014.

\bibitem[LKZ15]{lesieur2015mmse}
Thibault Lesieur, Florent Krzakala, and Lenka Zdeborov{\'a}.
\newblock Mmse of probabilistic low-rank matrix estimation: Universality with
  respect to the output channel.
\newblock In {\em 2015 53rd Annual Allerton Conference on Communication,
  Control, and Computing (Allerton)}, pages 680--687. IEEE, 2015.

\bibitem[McS01]{mcsherry2001spectral}
Frank McSherry.
\newblock Spectral partitioning of random graphs.
\newblock In {\em Foundations of Computer Science, 2001. Proceedings. 42nd IEEE
  Symposium on}, pages 529--537. IEEE, 2001.

\bibitem[Mon15]{montanari2015finding}
Andrea Montanari.
\newblock Finding one community in a sparse graph.
\newblock {\em Journal of Statistical Physics}, 161(2):273--299, 2015.

\bibitem[MRZ15]{montanari2015limitation}
Andrea Montanari, Daniel Reichman, and Ofer Zeitouni.
\newblock On the limitation of spectral methods: From the gaussian hidden
  clique problem to rank-one perturbations of gaussian tensors.
\newblock In {\em Advances in Neural Information Processing Systems}, pages
  217--225, 2015.

\bibitem[MW15]{ma2015computational}
Zongming Ma and Yihong Wu.
\newblock Computational barriers in minimax submatrix detection.
\newblock {\em The Annals of Statistics}, 43(3):1089--1116, 2015.

\bibitem[SWP{\etalchar{+}}09]{shabalin2009finding}
Andrey~A Shabalin, Victor~J Weigman, Charles~M Perou, Andrew~B Nobel, et~al.
\newblock Finding large average submatrices in high dimensional data.
\newblock {\em The Annals of Applied Statistics}, 3(3):985--1012, 2009.

\bibitem[VAC{\etalchar{+}}15]{verzelen2015community}
Nicolas Verzelen, Ery Arias-Castro, et~al.
\newblock Community detection in sparse random networks.
\newblock {\em The Annals of Applied Probability}, 25(6):3465--3510, 2015.

\bibitem[WBP16]{wang2016average}
Tengyao Wang, Quentin Berthet, and Yaniv Plan.
\newblock Average-case hardness of rip certification.
\newblock In {\em Advances in Neural Information Processing Systems}, pages
  3819--3827, 2016.

\bibitem[WBS16]{wang2016statistical}
Tengyao Wang, Quentin Berthet, and Richard~J Samworth.
\newblock Statistical and computational trade-offs in estimation of sparse
  principal components.
\newblock {\em The Annals of Statistics}, 44(5):1896--1930, 2016.

\bibitem[WX18]{wu2018statistical}
Yihong Wu and Jiaming Xu.
\newblock Statistical problems with planted structures: Information-theoretical
  and computational limits.
\newblock {\em arXiv preprint arXiv:1806.00118}, 2018.

\end{thebibliography}
\bibliographystyle{alpha}

\end{document}